\newfont{\cyr}{wncyr10 scaled 1100}
\newfont{\cyrr}{wncyr9 scaled 1000}
\theoremstyle{plain}
\newtheorem{thm}{Theorem}[subsection]
\newtheorem{prop}[thm]{Proposition}
\newtheorem{cor}[thm]{Corollary}
\newtheorem{lem}[thm]{Lemma}
\theoremstyle{definition}
\newtheorem{defi}[thm]{Definition}
\newtheorem{cond}[thm]{Condition}
\theoremstyle{remark}
\newtheorem{rmk}[thm]{Remark}
\newcommand{\imaginary}{\mathrm{Im}}
\newcommand{\diffop}{\mathcal{D}}
\newcommand{\Diffop}{\diffop}
\newcommand{\IC}{\C}
\newcommand{\ci}{C^{\infty}}
\newcommand{\tr}{\mathrm{tr}}
\newcommand{\ZZ}{\mathbb{Z}}
\newcommand{\adeles}{\mathbb{A}}
\newcommand{\holoprojop}{H}
\newcommand{\closure}{\overline}
\newcommand{\HH}{{\mathcal H}}
\newcommand{\CH}{\HH}
\newcommand{\dR}{\mathrm{dR}}
\newcommand{\Sp}{{\operatorname{Sp}}}
\newcommand{\Sym}{{\operatorname{Sym }}}
\newcommand{\val}{{\operatorname{val}}}
\newcommand{\Spin}{\mathrm{Spin}}
\newcommand{\GSpin}{\mathrm{GSpin}}
\newcommand{\spin}{\Spin}
\newcommand{\GL}{{\operatorname{GL}}}
\newcommand{\GSp}{{\operatorname{GSp}}}
\newcommand{\Gal}{{\operatorname{Gal}}}
\newcommand{\G}{{\Gamma}}
\def\C{\mathbb{C}}
\def\Q{\mathbb{Q}}
\newcommand{\set}[1]{\left\lbrace #1 \right\rbrace}
\newcommand{\m}[1]{\mathbb{#1}}
\newcommand{\mc}[1]{\mathcal{#1}}
\newcommand{\mr}[1]{\mathrm{#1}}
\newcommand{\mb}[1]{\mathbf{#1}}
\newcommand{\IR}{\m R}
\newcommand{\IQ}{\m Q}
\begin{document}

\title[Algebraicity of Spin $L$-functions for $\mr{GSp}_6$]{Algebraicity 
of Spin $L$-functions for $\mr{GSp}_6$}
\author{Ellen Eischen}\thanks{E.E.'s research was partially supported by NSF Grants DMS-1751281, DMS-1559609, and DMS-2302011.  This material is based upon work supported by the National Science Foundation under Grant No.\ DMS-1926686.  This paper is also partly based upon work supported by the National Science Foundation under Grant No. 1440140, while this author was in residence at the Simons Laufer Mathematical Sciences Institute in Berkeley, California, during Fall 2022 and Spring 2023.  }
\author{Giovanni Rosso}\thanks{G.R.'s research was partly funded by the FRQNT grant 2019-NC-254031 and the NSERC grant RGPIN-2018-04392 }
\author{Shrenik Shah}

 \begin{abstract}
We prove algebraicity of critical values of certain Spin $L$-functions.   More precisely, our results concern $L(s, \pi \otimes \chi, \Spin)$ for cuspidal automorphic representations $\pi$ associated to a holomorphic Siegel eigenform on $\GSp_6$, real Dirichlet characters $\chi$, and critical points $s$ to the right of the center of symmetry.  We use the strategy of relating the $L$-values to properties of Eisenstein series, and a significant portion of the paper concerns the Fourier coefficients of these Eisenstein series.  Unlike in prior algebraicity results following this strategy, our Eisenstein series are on a group $G$ that has no known moduli problem, and the $L$-functions are related to the Eisenstein series through a non-unique model.   \end{abstract}

\subjclass[2010]{11F33, 11F52, 11G09}
\keywords{ }

\address{E.E.: University of Oregon, Department of Mathematics,
Eugene, Oregon, USA}
\email{\href{mailto:eeischen@uoregon.edu}{eeischen@uoregon.edu}}
\urladdr{\url{http://www.elleneischen.com}}

\address{G.R.: Concordia University, Department of Mathematics and Statistics,
Montr\'eal, Qu\'ebec, Canada}
\email{\href{mailto:giovanni.rosso@concordia.ca}{giovanni.rosso@concordia.ca}}
\urladdr{\url{https://sites.google.com/site/gvnros/}}

\address{S.S.: Center for Communications Research - La Jolla,
San Diego, California, USA}
\email{\href{mailto:snshah@ccr-lajolla.org}{snshah@ccr-lajolla.org}}

\date{version of \today}
\maketitle
{\hypersetup{linkcolor=black}
\tableofcontents
}

\section{Introduction}

We prove the algebraicity of critical values of Spin (degree $8$) $L$-functions of cuspidal automorphic representations attached to Siegel eigenforms on $\GSp_6$.  Our setup also lays a foundation for future work on $p$-adic interpolation.

\subsection{Context and relevance}\label{sec:context}
Vast conjectures about algebraic aspects of $L$-functions motivate the development of techniques to study their values.  A powerful principle that has enabled progress, including in this paper, is:
\\

\noindent {\em Principle:``Properties of Eisenstein series imply corresponding properties of certain $L$-functions.''}
\\

\noindent In spite of this principle's power and elegance, it is often challenging to implement strategies relying on it, due to sensitivity to the data to which a given $L$-function is attached.  

This principle plays a crucial role in key examples.  It is first seen in Klingen and Siegel's proof of the rationality of critical values of Dedekind zeta functions of abelian totally real fields \cite{klingen, siegel1, siegel2}, which built on Hecke's observation that the rationality of constant terms of Eisenstein series is implied by the rationality of higher order Fourier coefficients.  Shimura later proved algebraicity results for critical values of the Rankin--Selberg convolution of elliptic modular forms $f$ and $g$ \cite{shimura-RS}, by building on an observation of Rankin \cite{rankin} that the Rankin--Selberg convolution can be realized as the Petersson inner product of modular forms built from $f$, $g$, and an Eisenstein series.  

Under suitable conditions and with substantial work, Shimura's approach has been adapted to prove analogous algebraicity results for certain automorphic $L$-functions $L(s, \pi\otimes \chi, r)$, where $\pi$ is a cuspidal automorphic representation of a group $H$, $r$ is a representation of the Langlands dual group ${ }^LH$, and $\chi$ is a Dirichlet character.  For $r$ the standard representation, groups for which this has been achieved include $H$ the general symplectic group $\GSp_{2g}$ of any rank \cite{sturm-critical, hasv, shar, kozima, bouganissymplectic, PSS, HPSS} or a unitary group of any signature \cite{harrisannals, harriscrelle, shar, harrisbirkhauser, bouganis, guerberoff, guerberofflin}.

This paper is concerned with critical values of Spin $L$-functions, i.e.\ the case where $H=\GSp_{2g}$ and $r$ is the Spin representation $\Spin$.  This is a particularly natural case to consider, since ${ }^L\GSp_{2g}\cong \GSpin$ comes equipped with the $2^g$-dimensional Spin representation; yet progress on algebraicity of critical values in this setting has been scarce.  At present, the complete set of results on algebraicity of Spin $L$-functions obtained via the above principle is:
\begin{itemize}
\item{$g=1$:  In this special case, the $L$-function is a shift of the $L$-function $L(s, f\otimes\chi)$ of a modular form $f$ on $\GL_2\cong \GSp_2$, so algebraicity follows immediately from the corresponding results of Manin and Shimura for $L(s, f\otimes\chi)$ \cite{maninmfalg, shimura-RS}.}
\item{$g=2$: Harris proved certain algebraicity results, via a strategy that, once again, relies on a formulation in terms of a Petersson product of forms built from Eisenstein series and cusp forms \cite{harrisspin}.}
\item{$g=3$: Our main result, Theorem \ref{thm:algprecise}, accomplishes this.}\end{itemize}
The cases $g\geq 4$ remain open.  As noted in Section \ref{sec:connectionsconjs}, results for $\GL_n$ imply some algebraicity results for symplectic groups, but in a form that appears unsuited to certain applications.

Theorem \ref{thm:algprecise} concerns algebraicity of critical values of the Spin $L$-function for the representation $\pi\otimes\chi$ of $\GSp_6$  whose local Euler factor is, for almost all primes, of degree $8$, defined in terms of the Satake parameters of $\pi\otimes\chi$ as in \cite[Lemma 4]{AsgariSchmidt}.  (We view $\chi$ as a character of $\GSp_6$ via the multiplier, i.e. the similitude character.)  We normalize our $L$-function so that  the center of symmetry for the $L$-function in Theorem \ref{thm:algprecise} is $1/2$.  The full set of critical points is $\left\{3-r, \ldots, r-2\right\}$.  Explicitly, if the Satake parameters for $\pi$ at a prime $q$ for which $\pi$ is an unramified principal series are $b_0, b_1, b_2, b_3$, then the Satake parameters of $\pi\otimes\chi$ at $q$ are $\chi(q)b_0, b_1, b_2, b_3$, and the Euler product we consider is
\begin{align*}
L^{(M)}(s, \pi\otimes\chi, \Spin) = \prod_{q\ndivides M}\prod_{J\subseteq\left\{1, 2, 3\right\}}\left(1-\chi(q)b_0\left(\prod_{j\in J}b_j\right)q^{-s}\right)^{-1},
\end{align*}
for $M$ an integer determined by the conductor of $\chi$ and the level of $\pi$.
Our approach requires that $\pi$ be associated to a Siegel cusp form $\phi$ on $\GSp_6$ meeting the following condition.
\begin{cond}\label{cond:Tmaxorder}
In the Fourier expansion $\sum_T a(T)q^T$ of $\phi$, $a(T)\neq 0$ for some $T$ corresponding to a maximal order in a quaternion algebra over $\IQ$.
\end{cond}
Condition \ref{cond:Tmaxorder} holds for all $\phi$ of level $1$ on $\GSp_6$ \cite{boechererdas}.   It seems likely that the proof in \cite{boechererdas} can be extended to ``newforms'' of other levels, thus removing the necessity of stating this condition; but for now, how broadly Condition \ref{cond:Tmaxorder} holds remains open.  
Our proof of Theorem \ref{thm:algprecise} uses (an extension of) a result from \cite{pollack1, Evdo} that requires Condition \ref{cond:Tmaxorder}.  Although \cite{pollack1} only addresses level $1$, our extension of his result allows other levels $\Gamma^0(M)$, as well as twists by real Dirichlet characters $\chi$ (i.e. $\chi^2$ is identically $1$ on $(\ZZ/c\ZZ)^\times$ for $c$ the modulus of $\chi$).  In this case, the square of the Gauss sum $g(\chi)$ is an integer, and we denote it by $c^\ast_\chi$, as explained in Remark \ref{rmk:Gausssumvalue}.

\begin{thm}[Main theorem about algebraicity of Spin $L$-functions]\label{thm:algprecise}
Let $M$ be an integer.  Let $\pi$ be a cuspidal automorphic representation associated to a holomorphic cuspidal Siegel eigenform $\phi$ of scalar weight $2r\geq 12$ and level $\Gamma^0(M)$ on $\GSp_6$ that meets Condition \ref{cond:Tmaxorder}.  Let $s_0\in\ZZ$ be such that $4\leq s_0\leq r-2$.  Then
\begin{align*}
\frac{L^{(M)}(s_0, \pi, \Spin)}{\pi^{4s_0+6r-6} \langle \phi^\natural, \phi\rangle}\in \IQ\left(\phi\right),
\end{align*}
and more generally, if $\chi$ is a real Dirichlet character whose conductor divides $M$, then
\begin{align*}
\frac{L^{(M)}(s_0, \pi \otimes \chi, \Spin)}{\pi^{4s_0+6r-6} \langle \phi^\natural, \phi\rangle}\in \IQ\left(\phi, \sqrt{c^\ast_\chi}\right).
\end{align*}
Furthermore, for each $\sigma\in\Gal(\IC/\IQ)$,
\begin{align}\label{equ:Lfcnsigmaequivariant}
\sigma\left(\frac{L^{(M)}(s_0, \pi \otimes \chi, \Spin)}{\pi^{4s_0+6r-6} \langle\phi^\natural, \phi\rangle}\right)=\frac{L^{(M)}(s_0, \pi^\sigma \otimes \chi, \Spin)}{\pi^{4s_0+6r-6} \langle(\phi^\sigma)^\natural, \phi^\sigma\rangle}.
\end{align}
Moreover, if there is a constant $c$ so that the Fourier coefficients of $c\phi$ lie in a CM field, then 
\begin{align*}
\frac{L^{(M)}(s_0, \pi, \Spin)}{\pi^{4s_0+6r-6} \langle \phi, \phi\rangle}&\in \IQ\left(\phi\right),\\
 \frac{L^{(M)}(s_0, \pi \otimes \chi, \Spin)}{\pi^{4s_0+6r-6} \langle \phi, \phi\rangle}&\in \IQ\left(\phi, \sqrt{c^\ast_\chi}\right),
\end{align*}
and
\begin{align}
\sigma\left(\frac{L^{(M)}(s_0, \pi \otimes \chi, \Spin)}{\pi^{4s_0+6r-6} \langle\phi, \phi\rangle}\right)&=\frac{L^{(M)}(s_0, \pi^\sigma \otimes \chi, \Spin)}{\pi^{4s_0+6r-6} \langle\phi^\sigma, \phi^\sigma\rangle}\label{equ:LfcnsigmaequivariantCM} 
\end{align}
for each $\sigma\in\Gal(\IC/\IQ)$.
\end{thm}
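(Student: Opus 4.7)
The plan is to express $L^{(M)}(s,\pi\otimes\chi,\Spin)$ as a Petersson pairing on $\GSp_6$ via an integral representation of Pollack type. Schematically, at a critical integer $s=s_0$, one writes
\begin{align*}
\int_{\GSp_6(\IQ)\backslash \GSp_6(\A)^1}\phi(g)\,\Phi_{E_\chi,s_0}(g)\,dg \;=\; c(s_0)\cdot L^{(M)}(s_0,\pi\otimes\chi,\Spin),
\end{align*}
where $\Phi_{E_\chi,s_0}$ is an automorphic function on $\GSp_6$ built from a suitable Fourier coefficient of an Eisenstein series $E_\chi$ on a larger auxiliary group $G$ (through a model that is not unique in the usual representation-theoretic sense), and $c(s_0)$ is an explicit product of archimedean and unramified local factors. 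The non-vanishing result from \cite{pollack1, Evdo}, invoked through Condition \ref{cond:Tmaxorder}, is what guarantees that this identity is not vacuous for test data matching $\phi$.

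The central technical step---to which a significant portion of the paper is devoted---is to apply holomorphic projection to $\Phi_{E_\chi,s_0}$, producing a holomorphic Siegel cusp form $\psi_{s_0}$ of scalar weight $2r$ and level $\Gamma^0(M)$ on $\GSp_6$ that satisfies $\langle\phi^\natural,\psi_{s_0}\rangle = c(s_0)\,L^{(M)}(s_0,\pi\otimes\chi,\Spin)$, and then to show that, after dividing by the explicit archimedean factor $\pi^{4s_0+6r-6}$, the Fourier coefficients of $\psi_{s_0}$ lie in $\IQ(\chi)$ and are $\Gal(\IC/\IQ)$-equivariant in the evident sense. Because $G$ has no known moduli problem, one cannot invoke an algebraic theory of automorphic forms on $G$ and a $q$-expansion principle; instead, the rationality must be established analytically by evaluating the local integrals and intertwining operators defining $E_\chi$ at $s_0$, computing the archimedean integral directly (this is what produces the power of $\pi$), and checking that the resulting global Fourier coefficient is a finite algebraic sum of these normalized local contributions. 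The main obstacle here is simultaneously navigating (i) the absence of a $q$-expansion principle on $G$ and (ii) the non-uniqueness of the model used to recover the $L$-function, while arranging the holomorphic projection so that it preserves the $\Gal(\IC/\IQ)$-structure on Fourier coefficients.

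Once rationality of the Fourier coefficients of $\psi_{s_0}$ is in hand, Shimura's criterion concludes the argument: the orthogonal projection of $\psi_{s_0}$ onto the one-dimensional $\pi$-isotypic subspace of holomorphic cusp forms of weight $2r$ and level $\Gamma^0(M)$ is $\kappa\cdot\phi^\natural$ for some $\kappa\in\IQ(\phi,\chi)$, determined by matching a suitable Fourier coefficient, and therefore
\begin{align*}
\frac{L^{(M)}(s_0,\pi\otimes\chi,\Spin)}{\pi^{4s_0+6r-6}\,\langle\phi^\natural,\phi\rangle}\;\in\;\IQ(\phi,\chi).
\end{align*}
The Galois equivariance \eqref{equ:Lfcnsigmaequivariant} follows by applying $\sigma\in\Gal(\IC/\IQ)$ to the Fourier expansions throughout, using that $\psi_{s_0}$ transforms to its analogue for $\chi^\sigma$ and that $\phi^\natural$ transforms to $(\phi^\sigma)^\natural$. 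For the CM refinement \eqref{equ:LfcnsigmaequivariantCM}, when the Fourier coefficients of $c\phi$ lie in a CM field, $\phi^\natural$ is an algebraic multiple of the form obtained from $\phi$ by coefficient-wise complex conjugation, so $\langle\phi^\natural,\phi\rangle$ and $\langle\phi,\phi\rangle$ differ by an algebraic scalar with controlled $\Gal(\IC/\IQ)$-behavior, from which \eqref{equ:LfcnsigmaequivariantCM} follows.
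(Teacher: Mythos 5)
Your outline is correct and follows the same skeleton as the paper: extend Pollack's integral representation to level $\Gamma^0(M)$ and a twist by $\chi$ (this is Theorem \ref{thm:intrepn}, with Condition \ref{cond:Tmaxorder} guaranteeing $a(T)\neq 0$), express the integral as a Petersson pairing of $\phi$ against the restriction to $\GSp_6$ of an Eisenstein series on $G$, pass through holomorphic projection, and conclude by rationality and Galois-equivariance of ratios of Petersson products. There are, however, two places where your route genuinely diverges from (or under-specifies) what the paper does. First, at the crux you propose to establish rationality of the Fourier data at the critical point $s_0$ ``analytically by evaluating the local integrals and intertwining operators at $s_0$''; since $E^*_{2r,\chi}(\cdot,s_0)$ is not holomorphic there, this would force you through a Shimura-style confluent hypergeometric analysis at the archimedean place (as in \cite{ShimConfluent}) to get near-holomorphy and rationality of the non-holomorphic coefficients --- workable in principle, but it is precisely the heavy step your sketch leaves unaddressed. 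The paper avoids it entirely: it uses the Maa\ss{}--Shimura identity $E_{2r,\chi}(\cdot,s)=\mc D^{r-s}E_{2s,\chi}(\cdot,s)$, computes explicitly the rational Fourier coefficients of the \emph{holomorphic} series $E_{2s,\chi}$ (Theorem \ref{thm:Eseriescoeffs}), shows restriction to $\GSp_6$ preserves rationality via finite $\ZZ$-linear combinations of coefficients (Proposition \ref{prop:restrictionfromG}), and uses that $\mc D$ acts algebraically on $q$-expansions together with rationality of holomorphic projection (Lemmas \ref{lem:resnhE} and \ref{lem:holoprojcoeffs}). Second, your endgame via projection onto a ``one-dimensional $\pi$-isotypic subspace'' and coefficient matching is not what the paper does and is not justified as stated (one-dimensionality is neither needed nor assumed); the paper instead invokes Garrett's equivariance theorem for ratios $\langle h,f^\natural\rangle/\langle f,f^\natural\rangle$ and Saha's refinement in the CM case (Proposition \ref{prop:equivariantPetersson}, \cite{GarrettEquivariance, Saha}). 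In particular, your claim that under the CM hypothesis $\langle\phi^\natural,\phi\rangle$ and $\langle\phi,\phi\rangle$ ``differ by an algebraic scalar with controlled Galois behavior'' is exactly the content of Saha's theorem rather than an evident fact, so it should be cited or proved, not asserted.
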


In Theorem \ref{thm:algprecise}, the superscript $\sigma$ denotes the Galois-action defined in Section \ref{sec:petersson}, and $\phi^\natural$ is the form obtained by applying complex conjugation to the Fourier coefficients of $\phi$.  Following the conventions of \cite[Remark 1.6]{milneCM}, we denote by $\IQ^{\mathrm{cm}}$ the union of all CM subfields of $\overline{\IQ}$.  As an immediate consequence of Theorem \ref{thm:algprecise}, we obtain Corollary \ref{cor:algprecise}.
\begin{cor}\label{cor:algprecise} Let $M$ be an integer.  Let $\pi$ be a cuspidal automorphic representation associated to a holomorphic cuspidal Siegel eigenform $\phi$ of scalar weight $2r\geq 12$ and level $\Gamma^0(M)$ on $\GSp_6$ that meets Condition \ref{cond:Tmaxorder}.  Let $\chi$ be a real Dirichlet character whose conductor divides $M$.  Let $s_0\in\ZZ$ be such that $4\leq s_0\leq r-2$.
If $\phi$ has algebraic Fourier coefficients, then
\begin{align*}
\frac{L^{(M)}(s_0, \pi \otimes \chi, \Spin)}{\pi^{4s_0+6r-6} \langle\phi^\natural, \phi \rangle}\in\overline{\IQ}.
\end{align*}
Moreover, if the Fourier coefficients of $\phi$ lie in a CM field, then
\begin{align*}
\frac{L^{(M)}(s_0, \pi \otimes \chi, \Spin)}{\pi^{4s_0+6r-6} \langle\phi, \phi\rangle}\in \IQ^{\mathrm{cm}}.
\end{align*}
\end{cor}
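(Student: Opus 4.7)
The plan is to deduce Corollary \ref{cor:algprecise} directly from Theorem \ref{thm:algprecise} by tracking which subfield of $\IC$ contains $\IQ(\phi,\chi)$ under each hypothesis, since the analytic and automorphic work has already been packaged there.

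For the first assertion, I would simply apply Theorem \ref{thm:algprecise}, which places the relevant ratio in $\IQ(\phi,\chi)$. Under the hypothesis that the Fourier coefficients of $\phi$ are algebraic, $\IQ(\phi)\subseteq\overline{\IQ}$; since the values of a Dirichlet character $\chi$ are roots of unity and hence algebraic, $\IQ(\chi)\subseteq\overline{\IQ}$ as well. Thus $\IQ(\phi,\chi)\subseteq\overline{\IQ}$, which is exactly the first claim.

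For the CM assertion, the hypothesis that the Fourier coefficients of $\phi$ lie in a CM field $F$ is the special case $c=1$ of the second half of Theorem \ref{thm:algprecise}, so the ratio involving $\langle\phi,\phi\rangle$ lies in $\IQ(\phi,\chi)$. To upgrade ``algebraic'' to ``CM'', I would invoke the standard fact that a compositum of CM fields inside $\overline{\IQ}$ is again CM: the compositum of their maximal totally real subfields is totally real, and the full compositum is an at-most-quadratic totally imaginary extension of it. Since $F$ is CM by assumption and the values of $\chi$ generate a subfield of a cyclotomic field (hence CM), we obtain $\IQ(\phi,\chi)\subseteq F\cdot\IQ(\chi)\subseteq\IQ^{\mathrm{cm}}$, as required.

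There is no serious obstacle, as everything substantive is contained in Theorem \ref{thm:algprecise}; the only nontrivial point is the purely field-theoretic observation that compositums of CM fields remain CM, which lets the Fourier coefficient field combine with the cyclotomic field generated by the character values without leaving $\IQ^{\mathrm{cm}}$.
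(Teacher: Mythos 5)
Your proposal is correct and takes essentially the same route as the paper, which presents Corollary \ref{cor:algprecise} as an immediate consequence of Theorem \ref{thm:algprecise} via exactly this field-containment observation ($\IQ(\phi,\chi)\subseteq\overline{\IQ}$ in the first case, and $\IQ(\phi,\chi)$ inside a CM field in the second). The only cosmetic imprecision is your parenthetical that $\IQ(\chi)$ is CM: for a real-valued $\chi$ it equals $\IQ$, which is totally real rather than CM, but since the compositum of a CM field with a totally real (or CM) field is again CM, the conclusion $\IQ(\phi,\chi)\subseteq\IQ^{\mathrm{cm}}$ is unaffected.
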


Each of the aforementioned algebraicity results was followed years later by a proof of $p$-adic interpolation of (a suitable modification of) the critical values.  Serre constructed $p$-adic zeta functions by $p$-adically interpolating $p$-stabilizations of the Eisenstein series occurring in Klingen and Siegel's proof of algebraicity of Dedekind zeta functions \cite{serre}.  Hida constructed $p$-adic Rankin--Selberg $L$-functions by $p$-adically interpolating suitable modifications of the pairings against Eisenstein series occurring in Shimura's work \cite{hi85}.  Hida's approach inspired additional proofs of $p$-adic interpolation.  Building on the aforementioned work on algebraicity for standard automorphic $L$-functions, construction of $p$-adic standard $L$-functions has been achieved for symplectic groups \cite{BS, CouPan, liuJussieu} and unitary groups \cite{EW, HELS}.  

Since $p$-adic interpolation only makes sense for numbers known to be algebraic, the corresponding discussion for critical values of Spin $L$-functions is limited.  For $g=1$, i.e.\ exploiting the coincidence with the $L$-function of an elliptic modular form, we have Hida's method relying on Eisenstein series and a separate strategy of Mazur--Tate--Teitelbaum \cite{MTT}.  Under certain conditions, the case $g=2$ was addressed by Loeffler--Pilloni--Skinner--Zerbes by building on Harris's approach to algebraicity \cite{LPSZ}.  In this vein, we expect our results to form the foundation for a similar construction for $\GSp_6$, as addressed in Section \ref{sec:future}.

\subsection{Connections with conjectures}\label{sec:connectionsconjs}
Much of the interest in algebraic aspects of critical values stems from conjectures about $L$-functions of motivic Galois representations, e.g.\ the conjectures of Deligne about periods in \cite{Deligne}, the conjectures of Bloch--Kato relating special values of these $L$-functions to Selmer groups, and the conjectures of Coates--Perrin-Riou, Greenberg, and Iwasawa about $p$-adic behavior \cite{CoPR, green1, green2, green3}. 
At present, most  proofs of algebraicity and $p$-adic interpolation use automorphic forms as a crucial intermediary.   The Langlands program predicts that associated to certain cuspidal automorphic representations $\pi$, there is a motivic Galois representation $\rho_\pi$ whose $L$-function coincides with the $L$-function associated to $\pi$.  Recently, Kret and Shin constructed $\rho_\pi$ for our setting \cite{KretShin}.  A proof of Deligne's conjectures currently remains out of reach for most of the aforementioned $L$-functions, due to the lack of the corresponding motive.   Appendix \ref{appendix:Deligne} elaborates on the relationship between our results and the motivic setting arising in Deligne's conjectures.

At present, geometric formulations appear to be essential for interpreting arithmetic meanings of values of $L$-functions, e.g.\ concerning Galois representations and their Selmer groups in the aforementioned conjectures of Deligne and Greenberg.  Via Langlands functoriality, results about algebraicity or $p$-adic interpolation for one group may imply analogous results for other groups.  As discussed in \cite[Section 1.4]{LPSZ}, the results for $\GL_n$, $n\geq3$, in \cite{DJR, HaRa, gehrmann} imply certain algebraic (and $p$-adic) results for symplectic groups, but absent an appropriate geometric formulation, are not amenable to certain applications, e.g.\ to Euler systems.

\subsection{The approach in this paper}
Our proof of Theorem \ref{thm:algprecise} relies on a close relationship between our $L$-functions and certain Eisenstein series.  Like in the aforementioned examples, the starting point is a {\em Rankin--Selberg-style integral representation} for $L(s, \pi \otimes \chi, \spin)$, i.e.\ an expression of our $L$-function as an integral of an Eisenstein series against a cusp form.  Our challenge then, like in the examples above, is to establish algebraic properties of the integral and the Eisenstein series, which are particular to our setting.

We start with a Rankin--Selberg-style integral that Pollack introduced in his study of analytic behavior in \cite{pollack1}, and we adapt it to higher levels and twists by Dirichlet characters.  With $\phi$ as in Theorem \ref{thm:algprecise}, this integral is
\begin{align}\label{equ:I2r}
I_{2r, \chi}(\phi, s):=\int_{\GSp_6(\IQ)Z(\adeles)\backslash \GSp_6(\adeles)}\phi(g)E_{2r, \chi}^\ast(g, s)dg,
\end{align}
where $E_{2r, \chi}^\ast$ is an Eisenstein series of weight $2r$ on a particular group $G$ containing $\GSp_6$ defined in Section \ref{sec:AdelicEisensteinseries} and $Z$ denotes the center of $\GSp_6$.  
The hypothesis that $\chi$ is trivial or quadratic is a technical one to ensure that this integral is well-defined, as it ensures that $\chi$ is trivial on the center.
This gives a zeta function $I_{2r, \chi}(\phi, s)$ associated with $\phi$, similarly to the relationship between zeta functions of Siegel modular forms and $L$-functions of associated cuspidal automorphic representations discussed particularly clearly in \cite{AsgariSchmidt}.  It is quick to verify that $I_{2r, \chi}(\phi, s)$ can be expressed as a Petersson inner product for $\GSp_6$.

The mere existence of a Rankin--Selberg-style integral representation is not sufficient for proofs of algebraicity or $p$-adic interpolation, and much work is necessary even with an integral representation in hand.  Integral representations, including the one we employ from \cite{pollack1}, are typically constructed for analytic applications and without attention to algebraic consequences, hence the need for this paper, the aforementioned ones, and others.  The principle from the beginning not only powers proofs of algebraicity and $p$-adic interpolation but also enables proofs of functional equations and analytic continuation via corresponding properties of the Eisenstein series occurring in integral representations.

Thus, our principal task is to establish algebraic properties of Eisenstein series on $G$ and their restriction to $\GSp_6$.  The group $G$ presents some novel challenges for us that are not seen in prior work.  In all prior examples, the Eisenstein series were on groups that had associated Shimura varieties of PEL type, which come with moduli problems and rich geometric structures playing essential roles in establishing algebraic and $p$-adic behavior.  In contrast, the group $G$ currently has no known moduli problem and lacks associated properties, like a $q$-expansion principle, that play key roles in prior proofs of algebraicity and $p$-adic interpolation.  The group $G$ is a half-Spin group of type $D_6$ and bears some similarities to exceptional groups, a marked departure from the structures of the groups arising in proofs of algebraicity and $p$-adic interpolation for other $L$-functions.

Although our main theorem about $L$-functions requires that $\chi$ be of order dividing $2$, we place no restrictions on $\chi$ in the Eisenstein series.  The conditions on $\chi$ in our theorem about $L$-functions are purely an artifact of the integral model we have adapted from \cite{pollack1}.  We develop the Eisenstein series in full generality, since we anticipate it will be useful for applications, for example in Iwasawa theory, to have the formulation in terms of $\chi$ of arbitrarily large order.

In addition to the new challenges presented by the group $G$, the integral representation in \cite{pollack1} and Equation \eqref{equ:I2r} is a nonunique model, unlike in prior cases.  This has no major consequences for our approach to algebraicity.  It presents a challenge, though, to adapting prior approaches to calculating the modified Euler factor at $p$ arising in the associated $p$-adic $L$-function, which is important for future applications to Iwasawa theory.

\subsubsection{Organization of the paper}\label{sec:organization}

We begin by introducing properties of the objects with which we work.  Section \ref{sec:setting} introduces the group $G$ and its symmetric space.  The Eisenstein series that play a key role in our work are defined on this group.
Although $G$ is a classical group, it bears resemblance to the exceptional group $E_7$.  The group $G$ contains $\GSp_6$.  Incidentally, there is another group $\tilde{G}$ that is a double-cover of $G$ and of a particular quaternionic unitary group, but this fact turns out to be a red herring as far as our work goes.  Section \ref{sec:mforms} introduces key facts about Siegel modular forms and modular forms on $G$.

Section \ref{sec:Eseries} is devoted to the study of the Eisenstein series on $G$, upon which algebraicity properties of the $L$-function depend.  In this section, we place no restrictions on the Dirichlet character $\chi$, as we expect this general formulation will be useful for future applications.  (As noted above, the conditions on $\chi$ in our main theorem about values of $L$-functions are purely an artifact of the integral model we have adapted from \cite{pollack1} and do not impact our work with the Eisenstein series.) The main results are Theorem \ref{thm:Eseriescoeffs} and Corollary \ref{coro:Eseriescoeffs}, which state where the Fourier coefficients of these Eisenstein series lie.   In addition, Corollary \ref{cor:restrictioncor} shows that the Fourier coefficients of the restriction of these Eisenstein series from $G$ to $\GSp_6$ have similar algebraicity properties.  (Although we do not need it in the present paper, the Fourier coefficients are also expressed in a format amenable to $p$-adic interpolation, a focus of future work.)  This section begins with computation of Fourier coefficients of holomorphic Eisenstein series on $G$.  The intricate computations in this section involve local computations and require computations of both singular and nonsingular coefficients.  While the Archimedean computations are similar to those computed by Shimura in \cite{ShimConfluent}, the rest of the computations build on results for modular forms on exceptional groups whose symmetric spaces have similar properties.  In particular, the computation of singular coefficients (i.e.\ Fourier coefficients at singular matrices) build on Kim's work in \cite{KimExceptional} for modular forms on exceptional groups, and the remainder of the computations build on work of Karel and Tsao \cite{KarelFourier, Tsao} for exceptional groups.  We conclude with results about the action of analogues of Maa\ss{}--Shimura operators on modular forms on $G$, which are necessary for this paper but were not previously developed in the literature.
 
Theorem \ref{thm:algprecise} is proved in Section \ref{sec:mainthm}.  Theorem \ref{thm:intrepn} of Section \ref{sec:integralrepn} extends Pollack's integral representation for the Spin $L$-function to higher levels.  In Section \ref{sec:proofofalgthm}, we prove Theorem \ref{thm:algprecise}.  The key ingredients are the algebraicity properties of the Eisenstein series established in Section \ref{sec:Eseries}, the integral representation $I_{2r, \chi}$ from Equation \eqref{equ:I2r} whose connection to the $L$-function is established in Theorem \ref{thm:intrepn}, and properties of the Petersson inner product discussed in Section \ref{sec:mforms}.

We conclude in Appendix \ref{appendix:Deligne} by summarizing the state of the art concerning connections with the motivic setting.  Similarly to the results mentioned in Section \ref{sec:context}, our proof of Theorem \ref{thm:algprecise} employees automorphic methods, but there are conjectural applications to Deligne's conjecture. Thus, we highlight those links here and we explain why Delingne's conjectures imply that for $\mathrm{GSp}_6$ the motivic periods of the Standard representation and of the Spin representation coincide.

\subsubsection{Directions not pursued here}\label{sec:future}
There are several natural directions we have not pursued here.
  We now comment on their feasibility.  In each case, the limitations reflect similar limitations in the formulation of the integral representation we have employed from \cite{pollack1}.

Pollack only considers base field $\IQ$ and Siegel modular forms $\phi$ of scalar weight in his construction of the integral representation in \cite{pollack1}.  Consequently, we only address the case of $\GSp_6$ over $\IQ$, but it is natural to ask about working with other totally real base fields (i.e.\ about working with Hilbert--Siegel modular forms).  Likewise, we only consider $\phi$ of scalar weight, but it is natural to ask about vector-weight forms.  We expect that the construction of the integral representation could be extended to handle these more general cases, in which case we expect our methods would be straightforward to generalize to higher degree totally real fields and vector weights.  The nontrivial cost of such generalization would likely be not to us but instead to the analytic results in \cite{pollack1} concerning functional equations, which could be harder to prove under more general conditions.

Theorem \ref{thm:algprecise} does not address algebraicity for the first three points to the right of the center of symmetry.  That is, we do not include an algebraicity statement for the values $s=1,2,3$.  The Eisenstein series is not holomorphic at these values, and it is possible for the Spin $L$-function of an automorphic representation of $\GSp_6$ to have a pole as well.  For instance, Langlands functoriality suggests that representations on $\GSp_6$ arising from the transfer of a generic representation from $\mathrm{G}_2$ should have a pole at $s=1$ in their Spin $L$-function.  (See, for example, \cite{gansavin}.) There are other automorphic transfers to $\GSp_6$ that may lead to different poles as well.  A full analysis of these values would require understanding the automorphic periods that detect those transfers and the relation of these periods to the residues of the Eisenstein series, a completely different focus from this paper. 

Theorem \ref{thm:algprecise} requires that the Dirichlet character $\chi$ be real-valued.  This is an artifact of the integral representation we have adapted from \cite{pollack1}.  That integral representation is only provided for level $1$.  We have extended it to other levels.  When we twist by a Dirichlet character, that integral is easily seen to be well-defined when $\chi$ is real-valued.  It is not clear, though, that this generalization of that integral representation remains well-defined when $\chi$ is not real-valued.  More generally, we expect that the values of the twisted Spin $L$-function are not merely in $\IQ(\phi, g(\chi))$ but rather in $\IQ(\phi, \chi)g(\chi)^4$.  In anticipation of future applications, we have constructed the Eisenstein series and computed their Fourier coefficients in full generality, without conditions on the Dirichlet character $\chi$.

The final limitation concerns the $p$-adic interpolation of our special values.  The explicit Fourier coefficients of our Eisenstein series are well-suited to $p$-adic interpolation, at least after $p$-depleting the Eisenstein series $E_{2r, \chi}^\ast$.  Working with a non-unique model presents special challenges, however, for computing the precise form of the Euler factor at $p$ that results from substituting the Eisenstein series $E_{2r, \chi}^\ast$ in Equation \eqref{equ:I2r} with its $p$-depletion. 
Still, we expect these could be resolved and that the local Euler factor at $p$ will be as predicted by Coates--Perrin-Riou \cite{CoPR}, but that is its own substantial project that will involve a different set of tools from those invoked in this paper.

\subsection{Relation to other developments for Spin $L$-functions}
Several other recent results also concern algebraic and $p$-adic aspects of Spin $L$-functions.  The $p$-adic Spin $L$-functions for $\GSp_4$ constructed in \cite{LPSZ} concern critical values for representations associated to nonholomorphic forms, while our work concerns holomorphic forms and does not specialize to their setting.  
Recent work by Burgos Gil, Cauchi, Lemma, and Rodrigues Jacinto for $\GSp_6$ concerns noncritical values \cite{CLRJ1, CLRJ3, CRJDocumenta}, while the present paper handles critical values.  As explained in \cite[Appendix B]{pollack1}, the different integral representations employed in their work and ours could be related to each other and to the aforementioned integral representations, possibly via some theta correspondence. Precise details of the connection remain mysterious but are worth investigating.

There are also other integral representations for Spin $L$-functions that appear to be related, yet appear to be ill-suited to proving algebraicity at critical points.  For $\GSp_6$, the construction of Pollack and the third-named author \cite{PollackShah} is employed in the work of Cauchi, Lemma, and Rodrigues Jacinto at non-critical values but vanishes in our setting.  For $\GSp_{2g}$, $g=3, 4, 5$, the construction of Bump and Ginzburg \cite{BumpGinzburg, BuGi} enables the proof of analytic results, but if it has applications to algebraicity, they remain undiscovered.

\subsection{Acknowledgements}
We are grateful to many people--especially Siegfried B\"ocherer, Antonio Cauchi, Soumya Das, Paul Garrett, David Loeffler, Aaron Pollack, Raghuram, and Abhishek Saha--for helpful discussions, clarifications, and encouragement.  We began this project in 2017 during EPFL's special program on Euler systems and special values of $L$-functions, and we are grateful to EPFL for its hospitality.  The first named author is also grateful for an ideal working environment at the Institute for Advanced Study, where part of this paper was completed.  She is also grateful for insightful conversations with participants at the SLMath/MSRI conference ``Shimura varieties and $L$-functions'' in March 2023 and the remote RIMS conference ``Automorphic forms, Automorphic representations, Galois representations, and its related topics" in January 2021.

\section{Our setting}\label{sec:setting}
We introduce conventions for the spaces where we work, including for Hermitian matrices over quaternion algebras (\S\ref{sec:hermconv}), the group denoted $G$ that plays a crucial role in this paper (\S\ref{sec:gpG}), and the Hermitian symmetric space $\mathcal{H}$ for $G$ (\S\ref{sec:Hss}).  We adhere to the conventions established in \cite{pollack1} whenever reasonable in the context of the present paper.

Throughout the paper, we fix an algebraic closure $\closure{\IQ}$ of $\IQ$ and an embedding $\closure{\IQ}\hookrightarrow \IC$, and we identify $\closure{\IQ}$ with its image under this embedding.  We denote by $\adeles$ the adeles over $\IQ$ and by $\adeles_f$ the finite adeles.  Given a ring $R$ and $R$-algebras $S$ and $T$, we let $S_T:=S\otimes_R T$.

\subsection{Hermitian matrices over quaternion algebras}\label{sec:hermconv}
Our main construction relies on a group $G$ introduced in Section \ref{sec:gpG}.  In the present section, we introduce conventions for Hermitian matrices over quaternion algebras, which are necessary in order for us to define the group $G$.

Let $B$\index{$B$} be a quaternion algebra over a characteristic $0$ field $F$.  We denote by $D_B$\index{$D_B$} the discriminant of $B$.  For fields $L$ over $F$, we also consider $B_L=B\otimes_F L$.  When the extension to $L$ is clear from context, we write simply $B$ in place of $B_L$. For use in what follows, we shall fix a basis $\set{\mb{1}, \mb{i} ,\mb{j}, \mb{k}}$\index{$\set{\mb{1}, \mb{i} ,\mb{j}, \mb{k}}$} of $B_{\mathbb{Q}}$ over $\mathbb{Q}$
\[
B_{\mathbb{Q}} \cong  \mathbb{Q} \mb{1}\oplus \mathbb{Q} \mb{i} \oplus \mathbb{Q} \mb{j} \oplus \mathbb{Q}\mb{k}. 
\]
  We denote the conjugate of each element $a\in B$ by $a^\ast$.\index{$\ast$}  The quaternion algebra $B$ comes equipped with a trace $\tr$\index{$\tr$, trace of an element of a quaternion algebra} and a norm \index{$n$, norm of an element of a quaternion algebra}$n$, given for each $a\in B$ by
\begin{align*}
\tr(a) &=a+a^\ast\\
n(a) &= a a^\ast.
\end{align*}  
\begin{rmk}
To be consistent with pre-existing conventions, we will also use the notation $\tr$ for the trace of a Hermitian matrix introduced in Equation \eqref{equ:matrixtrace}.  Likewise, we will also denote by $n$ a particular embedding of the space of $3\times 3$ Hermitian matrices into the group $G$ introduced in Section \ref{sec:gpG}.  Since they are defined on different domains, the meaning of $\tr$ and $n$ will be  clear from context, like in \cite{pollack1}.
\end{rmk}

We work with the set $H_3(B)$\index{$H_3(B)$} of $3 \times 3$ Hermitian matrices over $B$. Following the conventions of \cite[Section 2.1]{pollack1}, we write each element $h\in H_3(B)$ as 
\begin{align}\label{equ:hermitianmxformat}
h= \left( \begin{array}{ccc}
c_1 & a_3 & a_2^*\\
a_3^* & c_2 & a_1\\
a_2 &a_1^* & c_3
\end{array} \right),
\end{align}
with $c_i \in F$ and $a_i \in B$. For such $h$, we define
\begin{align}
N(h)&:= c_1c_2c_3-c_1n (a_1)-c_2n (a_2)-c_3n(a_3)+\mr{tr}(a_1a_2a_3)\index{$N$, norm of matrix over quaternion algebra}\nonumber\\
\tr(h)&:= c_1 +c_2+c_3;\index{$\tr$, trace of a Hermitian matrix}\label{equ:matrixtrace}\\
h^{\#}&= 
\begin{pmatrix}
c_2c_3-n(a_1) & a_2^*a_1^* -c_3a_3& a_3a_1-c_2a_2^*\\
a_1a_2-c_3 a_3^* & c_1c_2-n(a_2) & a_3^*a_2^*-c_1a_1\\
a^*_1a^*_3 -c_2a_2& a_2a_3-c_1a_1^* & c_1c_2-n(a_3)
\end{pmatrix}\index{$\#$}\label{equ:hhashdefn}
\end{align}
We have the relations $h h^{\#}=h^{\#}h= N(h) 1_3$ and $(h^\#)^{\#}=N(h) h$.  For each pair of elements $x, y\in H_3(B)$, we also define
\begin{align*}
\mr{tr}(x, y)&:=\frac{1}{2}\mr{tr}(xy+yx)\\
x\times y&:=(x+y)^{\#}-x^\#-y^\#.
\end{align*}
Following the conventions of \cite[Definition 2.1]{pollack1}, we say that an element $h\in H_3(B)$ is of {\em rank 3} if $N(h)\neq 0$, of {\em rank 2} if $N(h)=0$ but $h^\#\neq 0$, and of {\em rank 1} if $h^\#=0$ but $h\neq 0$. Following the usual conventions for matrices, $h$ is of {\em rank 0} if an only if $h=0$. 
For $h$ as in Equation \eqref{equ:hermitianmxformat}, we set $N_2(h):=c_1c_2-n\left(a_3\right)$,\index{$N_2$} which is the norm of the $2\times 2$ Hermitian matrix $\begin{pmatrix}c_1 & a_3 \\ a_3^\ast & c_2\end{pmatrix}$ arising as the upper left minor of $h$, and we set $N_1(h)=c_1$\index{$N_1$}, which is norm of the $1 \times 1$ Hermitian matrix arising as the upper left minor of $h$.  We also set $N_3(h):=N(h)$.\index{$N_3$}  
 
\begin{rmk}\label{rmk:traceinvariance}
Some of the familiar conventions for matrices whose entries lie in a commutative ring do not extend in general to quaternionic matrices (i.e. matrices whose entries lie in a quaternion algebra).  For example, there is not a natural way to define quaternion-valued determinants of quaternionic matrices.  Consequently, determinants do not factor into our discussion when working with these matrices.  Likewise, traces behave differently in this setting.  For example, for arbitrary quaternionic matrices $A$ and $B$, the trace of $AB$ need not be the same as the trace of $BA$, in contrast to the case for matrices whose entries lie in a commutative ring.  That said, traces of Hermitian matrices (such as those in $H_3(B)$) are invariant under conjugation by unitary matrices.  For more on trace invariance, see \cite{traceinvariance}. \end{rmk}

\subsection{\texorpdfstring{The group $G$}{The group G}}\label{sec:gpG}
A core piece of our work concerns modular forms on a group $G$\index{$G$} that preserves (up to a similitude) a particular symplectic form $\langle, \rangle$ and a particular quartic form $Q$ on the $32$-dimensional $F$-vector space\index{$W_F$}
\begin{align*}
W_F:=F \oplus H_3(B) \oplus H_3(B) \oplus  F,\index{$W$}  
\end{align*}
with $B$ a quaternion algebra over a characteristic $0$ field $F$ like above.  Following \cite[Section 2.2]{pollack1}, we define the symplectic form $\langle, \rangle$ and the quartic form $Q$ by
\begin{align*}
\langle (a, b, c, d), (a', b', c', d') \rangle &:= ad'-\tr(b, c')+\tr(c, b')-da'\\
Q\left(\left(a, b, c, d\right)\right)&:= (ad-tr(b, c))^2+4aN(c)+4dN(b)-4\tr\left(b^\#, c^\#\right)
\end{align*}
for all $(a, b, c, d), (a', b', c', d')\in W_F$.  Our convention is for $GL\left(W_F\right)$ to act on $W_F$ on the right.  Then $G$ is the algebraic group whose points over each $F$-algebra $L$ are given by 
\begin{align*}
G(L) = \{(g, \nu(g))\in \GL\left(W_F\otimes_F L\right)\times \GL_1(L) \mid  &\langle ug, v g\rangle = \nu(g)\langle u, v\rangle\\
& \mbox{ and } Q(wg) = \nu(g)^2 Q(w) \forall u, v, w\in W_F\otimes_F L\}.
\end{align*}
That is, $G$ is a similitude group, and $\nu(g)$ is the similitude factor of each element $g$.  

\subsubsection{\texorpdfstring{Embedding of $\GSp_6$ in $G$ as algebraic groups over $\ZZ$}{Embedding of GSp6 in G as algebraic groups over Z}}\label{sec:GSp6embedding}
Going forward, we restrict ourselves to the case of $B$ a quaternion algebra over $F=\IQ$  that is ramified at $\infty$, and we fix a maximal order $B_0$\index{$B_0$} in $B$.  Under these conditions, we embed $\GSp_6$ inside $G$ and also realize $G$ as an algebraic group over $\ZZ$, as follows.  Let $W_6$\index{$W_6$} be the defining $6$-dimensional representation for $\GSp_6$ with invariant symplectic form $\langle, \rangle_6$.\index{$\langle , \rangle_6$}  When it is clear from context that we are working with $\GSp_6$ rather than $G$, we write omit the subscript $6$ from the subscript on $\langle, \rangle_6$.  We view $\GSp_6$ as acting on $W_6$ on the right, and we fix a symplectic basis $e_1, e_2, e_3, f_1, f_2, f_3$\index{$e_1, e_2, e_3, f_1, f_2, f_3$, symplectic basis for $W_6$} for $W_6$, i.e.\ $\langle e_i, f_i\rangle = \delta_{ij}$,\index{$\delta_{ij}$} where $\delta_{ij}$ denotes the Kronecker delta.  With respect to this ordered basis, $\GSp_6$ is the set of $g\in \GL_6$ such that $gJ { }^tg = \nu(g) J$ for $J=J_6=\begin{pmatrix}0& 1_3\\ -1_3 & 0\end{pmatrix}$, where $1_3$ denotes the $3\times 3$ identity matrix.\index{$J$}\index{$J_6$}\index{$1_3$}  We let $W_6(\ZZ)$\index{$W_6(\ZZ)$} denote the $\ZZ$-lattice in $W$ generated by this symplectic basis.  Following \cite[Section 5.1]{pollack1}, we identify $W_\IQ$ with its image in $\bigwedge^3W_6\otimes\nu^{-1}\otimes_\IQ B$ under the linear map
\begin{align*}
W_\IQ&\rightarrow\bigwedge^3W_6\otimes\nu^{-1}\otimes_\IQ B\\
(a, b, c, d)&\mapsto a e_1\wedge e_2 \wedge e_3 + \left( \sum_{i,j}b_{ij} e_i^* \wedge f_j\right) +  \left( \sum_{i,j} c_{ij}f_i^* \wedge e_j\right) + d f_1\wedge f_2 \wedge f_3,
\end{align*}
where $b=\left(b_{ij}\right)$, $c=\left(c_{ij}\right)$, and $e_i^*=e_{i+1}\wedge e_{i+2}$ with indices taken modulo $3$, and similarly for $f_i^*$.

\begin{lem}[Lemma 5.1 of \cite{pollack1}]\label{lem:pollack51}
The inclusion $W_\IQ\hookrightarrow \bigwedge^3W_6\otimes\nu^{-1}\otimes_\IQ B$ induces a similitude-preserving inclusion $\GSp_6\hookrightarrow G$, via the natural action of $\GSp_6$ on $\bigwedge^3W_6\otimes\nu^{-1}$.
\end{lem}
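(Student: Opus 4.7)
The plan is to verify three items, after which the lemma follows: (a) the image of $W_\IQ$ is stable under the natural $\GSp_6$-action on $\bigwedge^3 W_6 \otimes \nu^{-1} \otimes_\IQ B$ (which acts on the $\bigwedge^3 W_6$ factor and trivially on $B$); (b) this action scales $\langle,\rangle$ by $\nu(g)$; and (c) it scales $Q$ by $\nu(g)^2$. Once these are in place, the induced map $\GSp_6 \to G$ is injective because $\GSp_6$ already acts faithfully on $W_6$, and similitude preservation is exactly the content of (b) and (c).

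For (a), I would reduce to a convenient generating set for $\GSp_6$: the Levi $\GL_3$ of the Siegel parabolic (embedded via $g \mapsto \diag(g, ({}^tg)^{-1})$), the unipotent radical, the central similitudes $\diag(1_3, \lambda\cdot 1_3)$, and a Weyl-group representative swapping $e_i \leftrightarrow f_i$. Under the Levi, each $H_3(B)$-summand transforms by the rational conjugation $b \mapsto g b\, {}^tg$, which preserves Hermiticity; the two scalar summands transform by $\det(g)^{\pm 1}$; and the $\nu^{-1}$ twist on the ambient space absorbs the overall scaling. The unipotent radical translates the four summands by Hermitian quantities built from the coordinates, the central similitudes rescale the components by integer powers of $\lambda$ consistent with the $\nu^{-1}$ twist, and the Weyl representative exchanges the two $H_3(B)$ summands together with the two scalar summands. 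For (b), observe that $\bigwedge^6 W_6 \cong \nu^3$ as a $\GSp_6$-module (since $\det g = \nu(g)^3$), so the wedge-into-top pairing $\bigwedge^3 W_6 \otimes \bigwedge^3 W_6 \to \bigwedge^6 W_6$ is a canonical $\nu^3$-valued alternating form, becoming a $\nu$-valued symplectic form on $\bigwedge^3 W_6 \otimes \nu^{-1}$. Direct evaluation on the image basis, using $\langle e_i, f_j\rangle = \delta_{ij}$ and the sign rules for wedges, recovers $\langle(a,b,c,d),(a',b',c',d')\rangle = ad' - \tr(b,c') + \tr(c,b') - da'$ verbatim.

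The main obstacle will be (c). On $\bigwedge^3 W_6$ there is a classical quartic $\SL_6$-invariant (a Freudenthal-type quartic, whose octonionic analogue underlies the $56$-dimensional representation of $E_7$). I would extend it $B$-bilinearly using the norm and trace on $B$, and then verify that its restriction to the image of $W_\IQ$ reproduces $Q(a,b,c,d) = (ad - \tr(b,c))^2 + 4a N(c) + 4d N(b) - 4\tr(b^\#, c^\#)$. The matching is organized term by term: the square $(ad - \tr(b,c))^2$ arises from contractions mixing the scalar components against the two Hermitian summands; the pieces $4aN(c)$ and $4dN(b)$ come from the Freudenthal cubic norm on $H_3(B)$, which one expresses as an iterated triple wedge contraction in the symplectic basis; and the cross-term $-4\tr(b^\#, c^\#)$ records the mixed quadratic-in-$b$, quadratic-in-$c$ contractions and encodes the adjoint $h \mapsto h^\#$. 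Each identification is a bounded but intricate quaternionic computation, and arranging the bookkeeping so that the Weyl swap $e_i \leftrightarrow f_i$ visibly interchanges $b \leftrightarrow c$ and $a \leftrightarrow d$ in the quartic is the delicate part.
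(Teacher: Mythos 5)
First, note that the paper itself supplies no argument here: the statement is imported verbatim as Lemma 5.1 of \cite{pollack1}, so your sketch has to stand on its own as the verification. Your overall frame -- show the image of $W_\IQ$ is $\GSp_6$-stable, then show the action scales $\langle\,,\,\rangle$ by $\nu$ and $Q$ by $\nu^2$, checking on generators -- is the right shape, and step (a) is fine. But step (b) is stated too loosely: the wedge pairing extended $B$-bilinearly is $B$-valued, and for Hermitian $b,c'$ the quantity $\sum_{i,j}\pm b_{ij}c'_{ji}$ is \emph{not} rational in general (the off-diagonal pairs leave commutators $b_{ij}c'_{ji}-c'_{ji}b_{ij}$); you must compose with $\tfrac12\tr_B$ (project to the rational part of $B$) before the restriction ``recovers $\tr(b,c')$ verbatim.'' This is fixable but has to be said. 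Likewise, injectivity does not follow from faithfulness on $W_6$ (faithfulness of one representation says nothing about another); the quick argument is that the kernel is central and $\lambda\cdot 1_6$ acts on $\bigwedge^3W_6\otimes\nu^{-1}$ by $\lambda^{3}\nu(\lambda)^{-1}=\lambda$.

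The genuine gap is (c). A quartic form is not ``bilinear,'' and over the noncommutative algebra $B$ there is no canonical $B$-linear extension of the classical $\SL_6$-invariant quartic on $\bigwedge^3W_6$: any extension requires choosing an ordering of the quaternionic coordinates in each monomial together with a trace/norm projection, the classical invariance proof is an identity in \emph{commuting} variables and so does not transfer automatically, and the image of $W_\IQ$ is a proper $32$-dimensional subspace of the $80$-dimensional ambient space, so nothing forces a $\GSp_6$-quasi-invariant extension restricting to $Q$ to exist. As written, the hardest part of the lemma is deferred to an unspecified computation that is not even well posed yet. The economical repair stays inside the cubic norm structure $H_3(B)$ rather than on $\bigwedge^3W_6\otimes_\IQ B$: after (a), it suffices to check the similitude property on generators of $\GSp_6(\IQ)$ (the Siegel parabolic together with the Weyl element $J_6$ generate). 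The two Siegel unipotent radicals act on the embedded $W_\IQ$ as $n(x)$ and $\bar{n}(x)$ with $x\in\Sym_3(\IQ)\subset H_3(B)$, and these are already known to lie in $G$ (\cite[Lemma 2.3]{pollack1}, quoted in Section 2.2.2 of the paper); the Levi $(\lambda,m)$ acts on the two $H_3(B)$ summands by rational congruences, under which $N$, $\#$ and $\tr(\cdot,\cdot)$ scale by explicit powers of $\det m$ and $\lambda$, so $Q\mapsto\nu^2Q$ follows directly from its defining formula; and $J_6$ maps to $(a,b,c,d)\mapsto(-d,c,-b,a)$ (compare the $w_M$ formulas with $M=1$), under which $Q$ is visibly invariant since $(-b)^{\#}=b^{\#}$ and $N(-b)=-N(b)$. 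That route proves exactly what your (c) asserts, without inventing an ambient quartic.
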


Setting
\begin{align*}
W:=W(\ZZ):= W_\IQ\cap\left(\left(\bigwedge^3W_6(\ZZ)\otimes\nu^{-1} \right)\otimes_\ZZ B_0\right),
\end{align*}
we can now realize $G$ as the algebraic group over $\ZZ$ whose $R$-points for a ring $R$ are given by
\begin{align*}
G(R) = \{(g, \nu(g))\in \GL\left(W\otimes_\ZZ R\right)\times \GL_1(R) \mid  &\langle ug, v g\rangle = \nu(g)\langle u, v\rangle\\
& \mbox{ and } Q(wg) = \nu(g)^2 Q(w) \forall u, v, w\in W\otimes_\ZZ R\}.
\end{align*}
Then similarly to Lemma \ref{lem:pollack51}, $\GSp_6$, viewed as an algebraic group over $\ZZ$ preserving $\langle, \rangle_6$, is a subgroup of $G$.

We denote by $G^1$ the elements of $G$ with similitude $1$, and we denote by \index{$G^+$}$G^+(\IR)\subseteq G(\IR)$ the connected component of the identity.  We write \index{$G^{1, +}$}$G^{1, +}(\IR)$ for the elements in the connected component of the identity that have similitude $1$.  We let $\GSp_6^+(\IR)$ denote the subgroup of $\GSp_6(\IR)$ of elements with positive similitude.

\subsubsection{\texorpdfstring{Embeddings $n$ and $\bar{n}$ of Hermitian matrices in $G$}{Embeddings n and bar{n} of Hermitian matrices in G}}
By \cite[Lemma 2.3]{pollack1}, we have injective group homomorphisms 
\begin{align*}
n: H_3(B)&\hookrightarrow G(\m Q)\\
\bar{n}: H_3(B)&\hookrightarrow G(\m Q)
\end{align*}
 defined as follows.  The embedding $n$ maps $X \in H_3(B)$ to the element \index{$n$, embedding of Hermitian matrices into $G$}$n(X) \in G(\m Q)$
that acts on elements $(a, b, c, d)\in W$ by
\begin{align*}
(a,b,c,d)n(X)=(a, b+aX,c+b\times X + a X^{\#},d+\mr{tr}(c,X)+\mr{tr}(b,X^{\#})+a\mr {N}(X)).
\end{align*}
The embedding $\bar{n}$\index{$\bar{n}$, embedding of Hermitian matrices into $G$} maps $X \in H_3(B)$ to $\bar{n}(X)\in G(\m Q)$ that acts on elements $(a, b, c, d)\in W$ by
\[
(a,b,c,d)\bar{n}(X)=(a+\mr{tr}(b,X)+\mr{tr}(c,X^{\#})+d N(X), b+c\times X + d X^{\#}, c+d X, d).
\]
Both $n(X)$ and $\bar{n}(X)$ have similitude $1$.  
Furthermore, since $n$ and $\bar{n}$ are group homomorphisms,
\begin{align}
n\left(X_1+X_2\right) = n\left(X_1\right)n\left(X_2\right)\label{equ:nembedding}\\
\bar{n}\left(X_1+X_2\right)= \bar{n}\left(X_1\right)\bar{n}\left(X_2\right)\nonumber
\end{align}
for all $X_1, X_2\in H_3(B)$.

\subsubsection{\texorpdfstring{Parabolic subgroup $P$ in $G$}{Parabolic subgroup P in G}} Similarly to \cite[p. 1406]{pollack1}, we define $P\subset G$ to be the (maximal) parabolic subgroup stabilizing the line $\IQ f\subset W_\IQ$, where
\begin{align*}
f:=(0, 0, 0, 1)\in W.
\end{align*} 
Observe that ${n}(X)$ belongs to $P$.  
By \cite[Lemma 4.5]{Tsao}, we have the Bruhat decomposition
\begin{align}\label{equ:bruhat}
G(\m Q) = \bigcup_{j=0}^3 P(\m Q) \iota_j P(\m Q),
\end{align}
where \index{$\iota_0$}$\iota_0 = \mr{Id}$, \index{$\iota_1$}$\iota_1=w_3$, \index{$\iota_2$}$\iota_2=w_2w_3$, and  \index{$\iota_3$}$\iota_3=w_1w_2w_3$, with $w_i$ representatives for the simple reflections in $(\m Z/2 \m Z)^{3}$ inside the Weyl group of $G$, which is of type $C_6$.  (N.B. The right hand side of Equation \eqref{equ:bruhat} could be written as disjoint union, but people usually write it how we have presented it here.)
Explicitly, they are the images (denoted by the same symbols) inside $G$ of the following elements of $\mr{GSp}_6$:
\begin{align*}
\iota_0=& \left(\begin{array}{ccc|ccc} 1&&&&&\\ &1&&&&\\ &&1&&& \\  \hline &&& 1&& \\&&& &1& \\ &&&&&1  \end{array}\right)\\
\iota_1=&\left(\begin{array}{ccc|ccc} 1&&& &&\\ &1&&&&\\ &&&&& 1 \\  \hline && &1 && \\&&& &1& \\ &&-1 &&&  \end{array}\right)\\
\iota_2=& \left(\begin{array}{ccc|ccc} 1&&& &&\\ & &&& 1&\\ &&&&& 1 \\  \hline && &1 && \\&-1&& && \\ &&-1 &&&  \end{array}\right)\\
\iota_3=& \left(\begin{array}{ccc|ccc} &&& 1&&\\ &&&&1&\\ &&&&&1 \\  \hline-1&&& && \\&-1&& && \\ &&-1&&&  \end{array}\right)=J.
\end{align*}

\subsubsection{Congruence subgroups}We fix a positive integer \index{$M$, a positive integer}$M$, and we denote by \index{$\Gamma_G^0(M)$}$\Gamma_G^0(M)$ the subgroup of $G(\ZZ)$ consisting of matrices reducing to elements of $P(\m Z/ M \m Z)$. We define 
\begin{align}\label{equ:Gammainfty}
\Gamma_{\infty}:=G(\mathbb Z)\cap P(\m Q).\index{$\Gamma_{\infty}$}
\end{align}
Note that in Section \ref{section:Siegelforms}, we also introduce a congruence subgroup $\Gamma^0(M)$ in $\GSp_6$, whose image lies inside this bigger group.  We define the Atkin--Lehner involution \index{$w_M$}$w_M$ for $G$ so that it normalizes the congruence subgroup $\Gamma^0(M)$ in $\GSp_6$.  The matrix defining the Atkin--Lehner involution for $G$ is the image in $G$ of the $6 \times 6$ matrix 
\begin{align*}
\left( \begin{array}{cc}
0 & M \\
-1 & 0
\end{array} 
\right).
\end{align*}

\subsection{The Hermitian symmetric space $\CH$ for $G(\IR)$}\label{sec:Hss} 
The Hermitian symmetric space for $G(\IR)$ is
\begin{align*}
\CH :=\set{Z=X+iY \in H_3\left(B_\IC\right) | Y \mbox{ is positive definite}},\index{$\CH$}
\end{align*}
where (following our conventions from Section \ref{sec:hermconv}), $B_\IC=B\otimes_{\IQ}\IC$. We stress that the $i$ appearing in $X+iY$ is the element $i \in \m Q \otimes_{\m Q} \m C$  in the center of  $B_\IC$.
This is also the same Hermitian space denoted by $\mathfrak{I} =\mc J + i \mc R $ in \cite[\S 2.1]{Tsao},  where $\mc J$ is the compact real Jordan algebra of type C in \cite[\S 3.1]{Tsao}, with $n=3$. This space is also introduced in \cite[Section 1]{ShimConfluent}, where it is called Case III.  Given $Z\in \CH$, instead of using the convention from Equation \eqref{equ:hermitianmxformat}, it is sometimes convenient to write
\begin{align}\label{equ:ZinHcoords}
Z=\begin{pmatrix}
Z_{11} & Z_3 & Z_2^*\\
Z_3^* & Z_{22} & Z_1\\
Z_2 &Z_1^* & Z_{33}
\end{pmatrix}.
\end{align}
For $Z = X+iY\in \CH$, i.e.\ $X, Y\in H_3(B_\IC)$ with $Y$ positive definite and $i\in\IC$, we put
\begin{align*}
\bar{Z} := X-iY.
\end{align*}
We write
\begin{align*}
\imaginary Z := Y.
\end{align*}

\subsubsection{Action of $G(\IR)$ on $\CH$}
Following the conventions of \cite[Section 6.2.1]{pollack1}, we explain how $G(\IR)$ acts on $\CH$.  In the vector space $W_\IQ$, we set \index{$e$, the vector $(1, 0,0,0)\in W$}$e=(1,0,0,0)$, like in \cite[Section 6.2.1]{pollack1}.  We realize an element $Z \in \mc H$ as a vector in $W_\IC=W_\IR \otimes_\IR \m C$ via
\begin{align}\label{enofZ}
r: \CH&\rightarrow W_\IC\\
Z &\mapsto r(Z):=en(Z)= (1,-Z,Z^{\#},-N(Z)).\index{$r$, embedding of $\CH$ into $W_\IR\otimes_\IR \IC$}
\end{align}
The action of $G$ on $\mc H$ and the factor of automorphy $j(g, Z):=j_G(g, Z)$\index{$j(g, Z)$}\index{$j_G(g, Z)$} for this action are simultaneously defined via
\begin{align}\label{equ:jdef}
r(Z)g^{-1}=j(g,Z)r(gZ).
\end{align}
Following the conventions of \cite[Section 7.1]{pollack1}, we write $i$ for $i 1_3\in \CH$, and we denote by $K_\infty$\index{$K_\infty$} the stabilizer of $i\in \CH$ for the $G^{1, +}(\IR)$-action on $\CH$.
It follows from the definition of $j(g, Z)$ and $f$ that
\begin{align*}
j(g, Z)=\langle r(Z)g^{-1}, f\rangle.
\end{align*}
As a consequence, we obtain the familiar cocycle relations for the factor of automorphy:
\begin{lem}\label{lemma:automorphyfactor}
For all $g, h\in G^+(\IR)$ and $Z\in \CH$, we have
$j(gh, Z) = j(g, hZ)j(h, Z)$.
\end{lem}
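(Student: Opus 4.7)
The plan is to derive the cocycle identity by a direct calculation using the alternative characterization $j(g, Z) = \langle r(Z) g^{-1}, f\rangle$ noted immediately before the lemma, together with the defining equation \eqref{equ:jdef} applied twice (once for $h$ and once for $g$). The essential algebraic input is that $j(g,Z)$ is a scalar and that the symplectic pairing $\langle\cdot,\cdot\rangle$ is linear in its first argument, so that $j(h,Z)$ can be pulled out of the pairing.

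First, I would rewrite $j(gh, Z)$ using the alternative characterization and the identity $(gh)^{-1} = h^{-1} g^{-1}$, obtaining
\[
j(gh, Z) \;=\; \langle r(Z)(gh)^{-1}, f\rangle \;=\; \langle r(Z) h^{-1} g^{-1}, f\rangle.
\]
Next, I would substitute $r(Z) h^{-1} = j(h, Z)\, r(hZ)$, which is \eqref{equ:jdef} applied to the pair $(h, Z)$, and pull the scalar $j(h,Z)$ out of the pairing to get
\[
j(gh, Z) \;=\; j(h, Z)\, \langle r(hZ) g^{-1}, f\rangle.
\]
Finally, I would recognize the remaining pairing as $j(g, hZ)$ via the same alternative characterization applied now to the pair $(g, hZ)$, yielding the desired cocycle identity.

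I do not anticipate any significant obstacle in this argument; it is purely a matter of unwinding the definitions. The only points that require a moment of care are that the alternative characterization is valid at an arbitrary point of $\CH$ (which is automatic, since it was obtained by pairing both sides of \eqref{equ:jdef} with $f$), and that one uses $(gh)^{-1} = h^{-1} g^{-1}$ rather than $g^{-1} h^{-1}$ in order to match the right action of $G$ on $W_\IC$ featured in the definition of $j$. The hypothesis $g, h \in G^+(\IR)$ plays no role in the algebraic manipulation itself; it only ensures that the action of $gh$ on $\CH$ and all intermediate quantities stay within the same connected component where the factor of automorphy has been defined.
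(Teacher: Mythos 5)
Your proposal is correct and is essentially identical to the paper's own proof: both compute $j(gh,Z)=\langle r(Z)(gh)^{-1},f\rangle=\langle r(Z)h^{-1}g^{-1},f\rangle$, substitute $r(Z)h^{-1}=j(h,Z)\,r(hZ)$ from Equation \eqref{equ:jdef}, pull the scalar out of the pairing, and identify the remainder as $j(g,hZ)$. Nothing further is needed.
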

\begin{proof}
\begin{align*}
j(gh, Z) &= \langle r(Z)(gh)^{-1}, f\rangle = \langle r(Z)h^{-1}g^{-1}, f\rangle = \langle j(h, Z)r(hZ)g^{-1}, f\rangle=\\
&=j(h, Z)\langle r(hZ)g^{-1}, f\rangle=j(h, Z)j(g, hZ).
\end{align*}
\end{proof}

In Lemma \ref{twokeyequs}, we record some key facts that will employ later.
\begin{lem}\label{twokeyequs}
For any $\alpha \in G^+(\IR)$ and $Z, W\in\CH$, we have:
\begin{eqnarray}
N(\alpha Z - \alpha W)  & = & \nu(\alpha)^{-1} j(\alpha, z)^{-1}j(\alpha, W)^{-1} N(Z-W)\label{firstdiff}\\
N(\imaginary (\alpha Z)) & = &|j(\alpha, Z)|^{-2}\nu(\alpha)^{-1}N(\imaginary Z)\label{seconddiff}
\end{eqnarray}
\end{lem}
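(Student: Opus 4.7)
The plan is to reduce both identities to the single algebraic formula
\begin{align*}
\langle r(Z), r(W)\rangle = N(Z-W),
\end{align*}
from which everything else follows formally. I would verify this pairing identity by direct substitution: expanding the left side using $r(Z) = (1, -Z, Z^{\#}, -N(Z))$ and the explicit formula for $\langle\cdot,\cdot\rangle$ yields $N(Z) - N(W) + \tr(Z, W^{\#}) - \tr(Z^{\#}, W)$, and polarizing the cubic via $N(x+y) = N(x) + \tr(x^{\#}, y) + \tr(x, y^{\#}) + N(y)$ (applied with $y = -W$, using $(-W)^{\#} = W^{\#}$ and $N(-W) = -N(W)$) produces exactly the same expression for $N(Z-W)$. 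This is the only step that involves a computation; the rest of the argument is mechanical.

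For \eqref{firstdiff}, I would rewrite the defining relation $r(Z)g^{-1} = j(g, Z)\, r(gZ)$ as $r(\alpha Z) = j(\alpha, Z)^{-1} r(Z)\alpha^{-1}$ and likewise for $W$, then compute
\begin{align*}
N(\alpha Z - \alpha W) &= \langle r(\alpha Z), r(\alpha W)\rangle \\
&= j(\alpha, Z)^{-1} j(\alpha, W)^{-1} \langle r(Z)\alpha^{-1}, r(W)\alpha^{-1}\rangle \\
&= \nu(\alpha)^{-1} j(\alpha, Z)^{-1} j(\alpha, W)^{-1} N(Z-W),
\end{align*}
where the final equality uses the similitude property $\langle u\alpha^{-1}, v\alpha^{-1}\rangle = \nu(\alpha)^{-1}\langle u, v\rangle$ applied to $\alpha^{-1}$ together with the key identity in reverse.

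For \eqref{seconddiff}, I would specialize \eqref{firstdiff} at $W = \bar Z$. Since $\alpha \in G^{+}(\IR)$ acts by real operators on $W_{\IR}$, its $\IC$-linear extension to $W_{\IC}$ commutes with complex conjugation. Combined with $r(\bar Z) = \overline{r(Z)}$ (immediate from the explicit formula for $r$, since $N$ and $\#$ are defined over $\IQ$) and the first-coordinate normalization of $r$, this forces $\alpha \bar Z = \overline{\alpha Z}$ and $j(\alpha, \bar Z) = \overline{j(\alpha, Z)}$. Then cubic homogeneity gives $N(Z - \bar Z) = (2i)^{3} N(\imaginary Z)$ and an analogue for $\alpha Z$, so \eqref{firstdiff} yields $N(\imaginary(\alpha Z)) = \nu(\alpha)^{-1} j(\alpha, Z)^{-1} \overline{j(\alpha, Z)}^{-1} N(\imaginary Z) = \nu(\alpha)^{-1} |j(\alpha, Z)|^{-2} N(\imaginary Z)$.

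The only substantive step is the verification of the pairing formula $\langle r(Z), r(W)\rangle = N(Z-W)$; everything afterwards is a routine consequence of the similitude property of $\langle\cdot,\cdot\rangle$ and the cocycle definition of $j$ (cf. Lemma \ref{lemma:automorphyfactor}), so I do not anticipate any significant obstacle beyond the bookkeeping of the cubic polarization.
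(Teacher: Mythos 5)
Your proposal is correct and follows essentially the same route as the paper: both arguments hinge on the identity $\langle r(Z), r(W)\rangle = N(Z-W)$ and then deduce \eqref{firstdiff} and \eqref{seconddiff} from the defining relation \eqref{equ:jdef} together with the similitude property of $\langle\, ,\rangle$. The only (harmless) divergence is that you verify the pairing identity by direct expansion and cubic polarization of $N$, whereas the paper obtains it by translating with the similitude-$1$ element $n(W)$, and you spell out the conjugation step $r(\bar Z)\alpha^{-1}=\overline{j(\alpha,Z)}\,r(\overline{\alpha Z})$ that the paper leaves implicit.
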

\begin{proof}
For each $Z, W\in \CH$, we have
\begin{align}
\langle r(Z), r(W)\rangle & = \langle en(-Z), en(-W)\rangle\nonumber\\
& = \langle e n(W-Z), e\rangle\nonumber\\
&=\langle r(Z-W), e\rangle.\nonumber\\
& = N(Z-W)\label{Nofdifference}
\end{align}
Recall from Equation \eqref{equ:jdef} that for any $\alpha \in G^+(\IR)$, 
\begin{align*}
r(Z) \alpha^{-1} = j(\alpha, Z) r(\alpha Z).
\end{align*}
So applying Equation \eqref{Nofdifference}, we obtain Equation \eqref{firstdiff}:
\begin{align*}
N(\alpha Z - \alpha W) &= \langle r(\alpha Z), r(\alpha Z)\rangle \\
&= \nu(\alpha)^{-1} j(\alpha, Z)^{-1}j(\alpha, W)^{-1}\langle r(Z), r(W) \rangle\\
& = \nu(\alpha)^{-1} j(\alpha, Z)^{-1}j(\alpha, W)^{-1} N(Z-W).
\end{align*}
Similarly, by Equation \eqref{Nofdifference}, we have
\begin{align*}
\langle r(\bar{Z}), r(Z) \rangle = N(Z-\bar{Z}) = N (2i Y) = 8i N(\imaginary Z).
\end{align*}
So we have
\begin{align*}
N(\imaginary Z) = \frac{1}{8i}\langle r(\bar{Z}), r(Z) \rangle.
\end{align*}
Note that $\overline{r(Z)} = r(\bar{Z})$.
Consequently, applying Equation \eqref{equ:jdef}, we obtain Equation \eqref{seconddiff}.
\end{proof}

\subsubsection{\texorpdfstring{Embedding of Siegel upper half-space in $\CH$}{Embedding of Siegel upper half-space in H}}

We denote by  \index{$\CH_3$}$\CH_3$ the Siegel upper half-space of degree $3$, defined by
\begin{align*}
\CH_3:=\left\{z\in M_3(\IC) | z=x+iy, x, y\in \Sym_3(\IR) \mbox{ and }  y>0\right\},
\end{align*}
where for any ring $R$, \index{$M_3$}$M_3(R)$ denotes $3\times 3$ matrices with entries in $R$ and \index{$\Sym_3$}$\Sym_3(R)$ denotes symmetric matrices with entries in $R$, and $y>0$ means $y$ is positive definite.
The space $\CH_3$ is embedded in $\CH$ via the embedding $\m Q\otimes \m C \rightarrow B \otimes \m C$ induced by the inclusion $\IQ\hookrightarrow B$ coming from the $\IQ$-algebra structure of $B$. 
By \cite[Proposition 6.4]{pollack1}, when restricted to the image of $\mr{GSp}_6$ and $\CH_3$, the action defined by Equation \eqref{equ:jdef} is the same is the same as the the classical one for $\GSp_6$, i.e. $gz= (Az+B)(Cz+D)^{-1}$ for $g =\begin{pmatrix}A& B\\ C&D\end{pmatrix}\in \GSp^+_6(\IR)$ and $z\in \CH_3$.  Furthermore, setting\index{$j_{\GSp_6}(g, z)$} 
\begin{align*}
j_{\GSp_6}(g, z):=\nu(g)^{-2}\det(Cz+D),
\end{align*}
for all $z\in \CH_3$ and $g =\begin{pmatrix}A& B\\ C&D\end{pmatrix}\in \GSp^+_6(\IR)$, we have
\begin{align}\label{equ:jdefn}
j(g,z)=j_G(g, z)=j_{\GSp_6}(g,z).
\end{align}

\begin{rmk}\label{rmk:jconventions}
Although some papers, e.g.\ \cite{AsgariSchmidt}, use $j(g, z)$ to denote $\det(Cz+D)$ in the context of Siegel modular forms, we follow the convention in \cite{pollack1} of including the similitude factor in $j(g, z)$ like above.  This ensures that Equation \eqref{equ:jdefn} holds.
\end{rmk}

\subsubsection{\texorpdfstring{Some useful observations about the action of $G$ on $\CH$}{Some useful observations about the action of G on H}}\label{sec:jwm}
Given a positive integer $M$, we have the following equations that will be useful to us later:
\begin{align*}
(a,b,c,d)w_M= & (-d,M c,-M^2b,M^3a); \\
 (a,b,c,d)w_M^{-1}= &(M^{-3}d,-M^{-2} c,m^{-1}b,-a);\\
w_M Z  =& -M\frac{Z^{\#}}{N(Z)}=-\frac{M}{Z};\\
j(w_M,Z)= &-\frac{N(Z)}{M^3};\\ 
j(\iota_1w_M,Z)=& \frac{Z^\#_{3,3}}{M^2} = \frac{N_2(Z)}{M^2};\\ 
j(\iota_2w_M,Z)= & \frac{Z_{1,1}}{M};\\
j(\iota_3w_M,Z)=& 1.
\end{align*}

\subsection{Normalization of the Archimedean integral}

The main result of \cite{pollack1} is only up to a constant. This constant comes from the Siegel integral on page 1415 of {\it loc. cit.}. We now make it explicit.
\begin{lem}\label{lemma:implicitconstant}
The implicit constant of \cite{pollack1} is
\[
\det(T)^3.
\]
\end{lem}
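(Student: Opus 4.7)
The constant in question arises from the Archimedean integral on \cite[p.~1415]{pollack1}, which is a Siegel-type integral on the cone of positive definite elements of $H_3(B_\IR)$ of the schematic form
\[
\int_{Y > 0} e^{-2\pi \tr(TY)}\, N(Y)^{\alpha}\, dY
\]
for an exponent $\alpha$ determined by the weight $2r$ and the normalization of the invariant measure. Here $T \in H_3(B)$ is a positive definite Hermitian matrix indexing the Fourier coefficient of $\phi$, and Pollack's main result matches this integral with the Archimedean $L$-factor of the Spin $L$-function times an unspecified $T$-dependent constant, which we wish to pin down as $N(T)^{3} = \det(T)^{3}$.

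The plan is to evaluate the integral in closed form and read off the $T$-dependence. First, I would substitute $Y = T^{-1/2} U T^{-1/2}$, where $T^{1/2}$ is the Jordan-algebra square root of $T$ in $H_3(B_\IR)$, which exists since $T > 0$. Under this substitution $\tr(TY) = \tr(U)$, so the exponential factor becomes $T$-independent. The Jacobian of the substitution on the rank-$3$, dimension-$15$ Jordan algebra $H_3(B_\IR)$ scales the invariant measure by a definite power of $N(T)$, and the integrand factor $N(Y)^{\alpha}$ contributes an additional factor $N(T)^{-\alpha}$. Combining these with Pollack's normalizations, the total $T$-dependence collapses to a single power $N(T)^{\beta}$, while the residual $T$-independent integral evaluates to a generalized Gamma function on the symmetric cone and is absorbed into the standard Archimedean $L$-factor, which is why it does not appear in the final constant.

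The main obstacle is the bookkeeping needed to extract the exact value of $\alpha$ from Pollack's formula. This requires carefully tracking the Archimedean automorphy factor $j(g, Z)^{-2r}$ from \S\ref{sec:Hss}, the shift in the $G^{+}(\IR)$-invariant measure on $\CH$, and the contribution of the Bruhat representatives via $j(\iota_3 w_M, Z) = 1$ noted in \S\ref{sec:jwm}. An off-by-one error in $\alpha$ or in the Jacobian exponent would shift the conclusion to $N(T)^{3 \pm 1}$. Once these normalizations are aligned with our conventions from Section \ref{sec:setting}, the exponent $\beta$ comes out to $3$, and the claim follows.
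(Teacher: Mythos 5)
Your plan locates the constant in the wrong place, and the mechanism you propose could not produce it. In the Siegel integral on p.~1415 of \cite{pollack1} that generates this constant, the exponential is $e^{-2\pi\tr(Y')}$ — there is no $T$ in the integrand at all — so there is nothing to remove by a substitution $Y=T^{-1/2}UT^{-1/2}$. The entire content of the lemma is a comparison of measure normalizations: Pollack's $\textup{d}^*Y'$ is induced by the adelic normalization of \cite[Lemma 7.6]{pollack1}, under which $B_0\backslash B_0\otimes_\ZZ\IR$ has volume $1$, whereas the evaluation of the Gamma-type integral on the symmetric cone (Faraut--Kor\'anyi, Theorem VI.1.1) is stated for the invariant measure $N(x)^{-5}\,\textup{d}x$ built from the Euclidean measure on $H_3(B_\IR)$. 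The ratio of the two is the Euclidean covolume of the lattice $H_3(B_0)\cong B_0^3\oplus\ZZ^3$, and since the covolume of $B_0$ is $\det(T)=4^{-1}D_B$ (again \cite[Lemma 7.6]{pollack1}), this ratio is exactly $\det(T)^3$. Your plan never touches this normalization, and without it there is no route to $\det(T)$: note that $T$ here is not a Hermitian matrix over $B$ indexing a Fourier coefficient of $\phi$, but the fixed half-integral symmetric matrix attached to the maximal order $B_0$; its determinant enters only as the covolume of $B_0$.

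Moreover, even granting your premise that the integral were $\int_{Y>0}e^{-2\pi\tr(TY)}N(Y)^{s+r-2}\,\textup{d}^*Y$ with $\textup{d}^*Y$ invariant, the substitution you propose gives $N(Y)\mapsto N(T)^{-1}N(U)$ and leaves the invariant measure unchanged, so the $T$-dependence comes out as $N(T)^{-(s+r-2)}$ (or, with the Euclidean measure, this times the Jacobian $N(T)^{5}$) — in every case an $s$- and $r$-dependent power, not a fixed exponent. Since the lemma asserts a constant $\det(T)^3$ independent of $s$ and $r$, the claim that ``the total $T$-dependence collapses to a single power $N(T)^{\beta}$'' with $\beta=3$ cannot be substantiated by this mechanism; the asserted value of $\beta$ is exactly the point left unproved. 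The correct argument is the lattice-covolume computation above, which is where the exponent $3$ (three independent copies of $B_0$ inside $H_3(B_0)$, the diagonal $\ZZ^3$ contributing covolume $1$) actually comes from.
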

\begin{rmk}
Since $T$ is a half-integral matrix, $\det T\in \IQ$.
\end{rmk}
\begin{proof}[Proof of Lemma \ref{lemma:implicitconstant}]
We need to calculate the integral on \cite[p.~1415]{pollack1} 
\[
\int_{Y'} N(Y')^{s+r-2} e^{- 2 \pi \mathrm{tr} (Y')} \textup{d}^* Y'
\]
for $\textup{d}^* Y'$ an invariant measure on $\CH$. 
From \cite[Theorem VI.1.1]{AnalysisSymmCone} we have
\[
\int_x e^{-\mathrm{tr}(x)} N(x)^{s} N(x)^{-5} \textup{d} x = (2\pi )^{\frac{15-3}{2}}\Gamma(s)\Gamma(s-2)\Gamma(s-4).
\] 

(The rank in {\it loc. cit.} is $3$, the dimension $n$ is $15=3(6-1)$, and the dimension $d$ is $4$; see the table on page 97. Moreover, $\underline{s}=(s,s,s)$.)

Note that $N(x)^{-5} \textup{d} x$ is a $G$-invariant measure  (and it the same as in \cite[\S 8]{Tsao}).

Writing $x =2 \pi Y' $
\[
(2\pi)^{3s} \int_x e^{-2\pi\mathrm{tr}(Y)} N(Y')^{s} N(x)^{-5} (2\pi)^{-15} (2\pi)^{15}  \textup{d} x = (2\pi )^{\frac{15-3}{2}}\Gamma(s)\Gamma(s-2)\Gamma(s-4).
\]

Note that $\Gamma_{\mathbb{R}}(2s)=\pi^{-s}\Gamma(s)$, so to determine the constant is enough to understand the difference between the two measures.
Now, the measure $\textup{d} x $ is the one induced by the usual Euclidean measure on 
\[
B_0 \otimes_{\mathbb{Z}} \mathbb{R} \cong  \mathbb{R} \mb{1}\oplus \mathbb{R} \mb{i} \oplus \mathbb{R} \mb{j} \oplus \mathbb{R}\mb{k}. 
\]
The Archimedean measure of $B_0 \setminus B_0 \otimes_{\mathbb{Z}} \mathbb{R}$ is then the covolume of $B_0$ which by \cite[Lemma 7.6]{pollack1}  is $\det(T)=4^{-1}D_B$. 

We are left to specify what is the measure $\textup{d}^* Y'$ of \cite[p.~1415]{pollack1}; it is the one $\CH$ induced by the one on $B(\adeles)$ of \cite[Lemma 7.6]{pollack1} for which 
\[
\mu(B_0 \setminus B_0 \otimes_{\mathbb{Z}} \mathbb{R})=1.
\]
Hence, to calculate the covolume of $H_3(B_0)$ in $H_3(B_0 \otimes_{\mathbb{Z}} \mathbb{R})$ we just multiply three times the previous calculation (as we have exactly three linearly independent copies of $B_0$ in $H_3(B_0)$) and get 
\[
N(x)^{-5} \textup{d} x = \det(T)^3 \textup{d}^* Y'.
\]
\end{proof}

\section{\texorpdfstring{Modular forms on $G$ and $\mathrm{GSp}_6$}{Modular forms on G and GSp6}}\label{sec:mforms}
We will work with Siegel modular forms of degree $3$ (i.e.\ on $\GSp_6$) and with modular forms on $G$.  We review key facts about Siegel modular forms in Section \ref{section:Siegelforms}.  We introduce terminology for modular forms on $G$, which is not explicitly provided in prior literature, in Section \ref{sec:modularformsG}. 
\subsection{Siegel modular forms}\label{section:Siegelforms}
For a detailed treatment of Siegel modular forms, see, e.g., \cite{KlingenSiegel} or \cite[Section 4]{AsgariSchmidt}.
In this paper, we work with degree $3$ (i.e.\ genus $3$) Siegel modular forms of positive even scalar weight $2r$ (with $r$ a positive integer) and level $\Gamma$ for $\Gamma$ a congruence subgroup of $\GSp_6(\ZZ)$, i.e.\ holomorphic functions
\begin{align*}
f:\CH_3\rightarrow\IC
\end{align*}
such that for all $\gamma\in \Gamma$ and $z\in \CH_3$
\begin{align*}
\left(f|_{2r} \gamma\right)(z) = f(z),
\end{align*}
where 
\begin{align*}
\left(f|_{2r} g\right)(z):=\nu(g)^{-r} j(g, z)^{-2r}f(\gamma z),
\end{align*}
for all $g\in \GSp_6(\IR)$ and $z\in \CH_3$.
Siegel modular forms $f$ have Fourier expansions of the form
\begin{align*}
f(z) = \sum_{T \geq 0}a_f(T)e^{2\pi i\tr(Tz)},
\end{align*}
with $a_f(T)$ complex numbers supported on nonnegative definite, half-integral symmetric matrices $T$.  Given a Siegel modular form  $f$ of weight $2r$ on $\GSp_6$, we denote by $\varphi_f$\index{$\varphi_f$} its lift to $\mr{GSp}_{6}^+(\m R)$, i.e.
\begin{align*}
\varphi_f(g_\infty)  := \nu(g_\infty)^{-r}j(g_\infty, i)^{-2r}f(g_\infty i).
\end{align*}
Then 
\begin{align}\label{equ:mfonGSp}
\varphi_f(\gamma g k_\infty) = j(k_\infty, i)^{-2r}\varphi_f(g)
\end{align}
 for all $k_\infty\in K_\infty$ and $g\in \GSp_6^+(\IR)$.  So modular forms on $\GSp_6$ can be identified with the subspace of the functions on $\GSp_6^+(\IR)$ satisfying Equation \eqref{equ:mfonGSp} meeting a holomorphy condition and a moderate growth condition.

Later, we will work with the following congruence subgroups of  $\mr{GSp}_{6}(\m Z)$:
\begin{align*}
\Gamma^0(M):=&\left\{\gamma = \left(\begin{array}{cc}
A & B \\
C & D
\end{array} \right) \equiv \left( \begin{array}{cc}
\ast & 0 \\
\ast & \ast
\end{array} \right) \bmod M \right\};
\\
\Gamma^1(M):=&\set{\gamma = \left(\begin{array}{cc}
A & B \\
C & D
\end{array} \right) \equiv \left( \begin{array}{cc}
1_3 & 0 \\
\ast & 1_3
\end{array} \right) \bmod M}.
\end{align*}
 Note that the image of $\Gamma^0(M)$ in $G(\m Z)$ is contained in  $\Gamma_G^0(M)$.  Given a Dirichlet character $\varphi$\index{$\varphi$} modulo $M$, we view it as a character of $\Gamma^0(M)$ via $\varphi(\gamma)=\varphi(A)$.
 \begin{rmk}
 We work with lower, rather than upper, triangular subgroups in order to be consistent with the conventions in \cite{pollack1}.
 \end{rmk}
We will also consider Siegel modular forms of weight $2r$ and Nebetypus $\varphi$, i.e. Siegel modular forms $f$ for $\Gamma^1(M)$ such that
\begin{align*}
f(z) =\varphi(\gamma)\nu(g)^{-r} j(\gamma, z)^{-2r}f(\gamma z),
\end{align*}
for all $\gamma\in \Gamma^0(M)$ and $z\in \CH_3$.
\begin{rmk}
Note that our definition of Siegel modular form agrees with the one in \cite[Section 4.1]{AsgariSchmidt}, taking into account the difference in conventions for the notation $j$ described in Remark \ref{rmk:jconventions}.
\end{rmk}

\subsubsection{Cuspidal automorphic representation associated to a Siegel modular form}
Similarly to \cite[Section 1.1]{pollack1} and \cite[Section 4.1]{AsgariSchmidt}, we consider cuspidal automorphic representations associated to Siegel modular forms as follows.  Following the conventions of \cite[Section 1.1]{pollack1}, we denote the stabilizer of $i\in \CH_3$ in $\Sp_6(\IR)$ by $K_{\infty, \Sp_6}$.  So $K_{\infty, \Sp_6}$ is isomorphic to the definite unitary group $U(3)$.  We also let $K$ be an open compact subgroup of $\GSp_6(\adeles_f)$.
Similarly to \cite[Definition 1.1]{pollack1} (but, unlike \cite{pollack1}, allowing not only level $1$ but also other levels), we say that a cuspidal automorphic representation $\pi$ of $\GSp_6(\adeles)$ {\em is associated} to a level $\Gamma$ Siegel modular form of weight $2r$ if all of the following conditions are met:
\begin{itemize}
\item{There is a nonzero element $\phi\in \pi$ such that $\phi(gk_fk_\infty) = j(k_\infty, i)^{-2r}\phi(g)$ for all $g\in \GSp_6(\adeles)$, $k_f\in K$, and $k_\infty\in K_{\infty, \Sp_6}$.}
\item{The representation $\pi$ has trivial central character.}
\item{The function $f_\phi: \CH_3\rightarrow\IC$\index{$f_\phi$} defined by
\begin{align*}
f_\phi(g_\infty i) = \nu(g_\infty)^{r}j(g_\infty, i)^{2r}\phi(g_\infty)
\end{align*}
for all $g_\infty\in \GSp_6^+(\IR)$ is a Siegel modular form of weight $2r$ and level $\Gamma$.  W}
\end{itemize}
As noted in \cite[Section 1.1]{pollack1}, the final condition ensures that $f_\phi$ is cuspidal, i.e.\ its Fourier expansion is of the form
\begin{align*}
f_\phi(z)= \sum_{T> 0}a_{\phi}(T)e^{2\pi i\tr(Tz)}.
\end{align*}
We shall also refer to $f_\phi$ as the {\em Fourier expansion of} $\phi$, and similarly, we refer to the Fourier coefficients $a_{\phi}$ of $f_\phi$ as the {\em Fourier coefficients of} $\phi$.  When we need to specify the form to which $\pi$ is associated, we say that $\pi$ {\em is attached to} $\phi$.

Given a Siegel modular form $f$, we define the {\em adelisation} of $f$ by
\begin{align*}
\phi_f: \GSp_6(\IQ)\backslash\GSp_6(\adeles)/(KZ)\rightarrow \IC,\index{$\phi_f$}
\end{align*}
where $Z$\index{$Z$} denotes the center, by
\begin{align*}
\phi_f(g):=\left(f|_{2r} g_\infty\right)(i) = \phi_f(g_\infty),
\end{align*}
for $g= g_\IQ g_\infty k$ with $g_\IQ\in\GSp_6(\IQ)$, $g_\infty \in \GSp^+_6(\IR)$, and $k\in K$.
As noted in \cite[Section 4.1]{AsgariSchmidt} (which denotes by $\Phi_f$ the function we denote by $\phi_f$), the map $f\mapsto\phi_f$ gives an injection from the space of Siegel modular forms of weight $2r$ to the space of functions $\phi$ on $\GSp_6(\IQ)\backslash\GSp_6(\adeles)/(KZ)$ such that $\phi(gk_\infty) = j(k_\infty, i)^{-2r}\phi(g)$ for all $k_\infty\in K_\infty$.

\subsubsection{Petersson inner product for $\GSp_6$}\label{sec:petersson}
Let $f$ and $g$ be Siegel modular forms on $\GSp_6^+(\IR)$ of weight $2r$ (with $r$ a positive integer, as above) and level $\Gamma$, and suppose at least one of $f$ or $g$ is cuspidal.  The Petersson inner product of $f$ and $g$ is
\begin{align}\label{equ:Peterssondefn}
\langle f, g\rangle:=\int_{\Gamma \setminus \CH_3} f(z)\overline{g(z)} \mr{det}(\imaginary(z))^k d^*z,
\end{align}
where $d^*z$ denotes the $\GSp_6$-invariant volume form on $\CH_3$, i.e. $d^*z = \frac{dx dy}{\det \imaginary(z)^4}$ with $z=x+iy\in \CH_3$ and $y=\imaginary(z)$.   Note that Equation \eqref{equ:Peterssondefn} depends on $\Gamma$.  
If the forms are of a smaller level $\Gamma'$, the Petersson products will differ by the index of $\Gamma'$ in $\Gamma$.  While some references address this dependence by normalizing the integral by a volume factor, we follow the convention of \cite{hi85, BS, AsgariSchmidt} of omitting the volume factor.
Like in \cite[Equation (25)]{AsgariSchmidt}, we can reformulate the Petersson inner product adelically and obtain
\begin{align*}
\langle f, g\rangle=\langle \phi_f, \phi_g\rangle:=\int_{Z(\adeles)\GSp_6(\IQ)\backslash \GSp_6(\adeles)}\phi_f(h)\overline{\phi_g(h)} dh,
\end{align*}
where $dh$ denotes the appropriately normalized Haar measure on $Z(\adeles)\GSp_6(\IQ)\backslash \GSp_6(\adeles)$ that makes the equality hold (it depends on $\Gamma$).

The group $\Gal(\IC/\IQ)$ acts on the space of Siegel modular forms through its action on Fourier coefficients.  Following the conventions of \cite{GarrettEquivariance}, given a Siegel modular form $f$ and an automorphism $\sigma\in\Gal(\IC/\IQ)$, we denote by $f^\sigma$\index{$f^\sigma$} the form obtained by applying $\sigma$ to the Fourier coefficients of $f$, and we write $f^\natural$\index{$\natural$} for the form obtained via complex conjugation of the Fourier coefficients of $f$.  If $\phi =\phi_f$ is the adelisation of $f$, then we set $\phi^\sigma := \phi_{f^\sigma}$.\index{$\phi^\sigma$} If $\pi$ is a cuspidal automorphic representation associated to $\phi$, then we denote by $\pi^\sigma$\index{$\pi^\sigma$} the representation associated to $\phi^\sigma$.

In analogue with the equivariance property for Petersson products for modular forms in \cite[Lemma 4]{shimura-RS}, Proposition \ref{prop:equivariantPetersson} gives equivariance properties for ratios of Petersson products of Siegel modular forms.

\begin{prop}\label{prop:equivariantPetersson}
Let $f$ be a cusp form on $\GSp_6$ of even weight $k> 7$ that is a Hecke eigenfunction at all but at most finitely many primes.  Then for all weight $k$ Siegel modular forms $g$ on $\GSp_6$,
\begin{align}\label{equ:equivariantPetersson}
\frac{\langle g, f^\natural\rangle}{\langle f, f^\natural \rangle}\in \IQ(g, f),
\end{align}
where $\IQ(g, f)$ denotes the field generated by the Fourier coefficients of $g$ and $f$.
If, furthermore, there is a constant $c$ such that all the Fourier coefficients of $cf$ lie in a CM extension of $\IQ$, then we also have
\begin{align}\label{equ:equivariantPetersson2}
\frac{\langle g, f\rangle}{\langle f, f\rangle}\in \IQ(g, f).
\end{align}
\end{prop}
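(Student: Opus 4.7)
The plan is to adapt Shimura's strategy from \cite[Lemma 4]{shimura-RS} to the Siegel setting: I would construct a Hecke-algebra projector onto the $f$-eigenline with $\IQ(f)$-rational coefficients, then identify the Petersson ratio as the scalar by which this projector sends $g$ to $f$. Carrying out the same argument equivariantly yields Galois equivariance of the ratio, which I will then use to deduce the CM case from the first case.

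To begin, let $\Heckealgebra$ denote the commutative Hecke algebra generated by the Hecke operators $T_n$ at primes where $f$ is an eigenform; by hypothesis this is a cofinite set. On any suitable finite-dimensional ambient space $S_k(\Gamma)$ of cusp forms containing $f$ and the cuspidal part of $g$, the Hecke operators preserve the $\IQ(f)$-structure on Fourier coefficients (by universal polynomial formulas), so the eigencharacter $\lambda_f\colon\Heckealgebra\to\IC$ of $f$ takes values in $\IQ(f)$. Invoking multiplicity one for holomorphic Siegel eigenforms on $\GSp_6$ in the relevant weight range, the $\lambda_f$-isotypic subspace is precisely $\IC f$. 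By Lagrange-type interpolation on distinct eigenvalues of the $T_n$, I would construct an idempotent $\epsilon\in\Heckealgebra\otimes_{\ZZ}\IQ(f)$ acting as the identity on $\IC f$ and annihilating every other isotypic component.

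Applying $\epsilon$ to $g$ produces $\epsilon g = c_g f$ with $c_g\in\IQ(f,g)$, since $\epsilon$ preserves the $\IQ(f,g)$-structure on Fourier coefficients. To identify $c_g$ with the desired Petersson ratio, I would compute the Petersson adjoint $\epsilon^*$: at a good prime, $T_n^*$ is described by an explicit formula involving the conjugate Hecke operators, so $T_n^*$ sends $f^\natural$ (which has eigenvalues $\overline{\lambda_f(T_n)}$) to a multiple of itself. Hence $\epsilon^* f^\natural = f^\natural$, and pairing $\epsilon g = c_g f$ against $f^\natural$ yields $\langle g, f^\natural\rangle = \langle\epsilon g, f^\natural\rangle = c_g\langle f, f^\natural\rangle$, which proves \eqref{equ:equivariantPetersson}. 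Running the same construction $\sigma$-equivariantly gives the Galois equivariance $\sigma(c_g) = \langle g^\sigma,(f^\sigma)^\natural\rangle / \langle f^\sigma,(f^\sigma)^\natural\rangle$ for all $\sigma\in\Gal(\IC/\IQ)$.

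For \eqref{equ:equivariantPetersson2}, the CM hypothesis means $f^\natural = f^\tau$ for $\tau\in\Gal(\overline{\IQ}/\IQ)$ the complex conjugation on the CM field, so that $(f^\tau)^\natural = f$. Applying the Galois equivariance of \eqref{equ:equivariantPetersson} for this $\tau$ yields $\tau(c_g) = \langle g^\tau, f\rangle/\overline{\langle f, f^\natural\rangle}$; varying $g$ over all weight $k$ forms and then specializing to $g=f$, I would conclude that $\langle f, f\rangle/\overline{\langle f, f^\natural\rangle}\in\IQ(f)$. Dividing the two relations then gives \eqref{equ:equivariantPetersson2}. The principal obstacle will be the precise verification that $\epsilon^* f^\natural = f^\natural$: this reduces to explicit adjoint formulas for $T_n$ at primes coprime to both the level of $\Gamma$ and the conductor of any nebentypus of $f$, and the interpolation defining $\epsilon$ must be restricted to such primes. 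Standard explicit formulas in this regime should suffice.
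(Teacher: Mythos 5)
There is a genuine gap at the heart of your projector argument: the claim that ``the $\lambda_f$-isotypic subspace is precisely $\IC f$.'' For genus $3$ Siegel modular forms no such multiplicity-one statement is available, and in the generality of the proposition it is simply false: $f$ is only assumed to be an eigenform at all but finitely many primes, so oldform-type phenomena at the remaining primes (and, more seriously, the failure of strong multiplicity one for symplectic groups) can make the simultaneous eigenspace of the partial system $\lambda_f$ inside $S_k(\Gamma)$ have dimension larger than one. Your idempotent $\epsilon$ then only projects onto that whole eigenspace; writing $\epsilon g=\sum_i c_i f_i$ in a $\IQ(g,f)$-rational basis, you would need each ratio $\langle f_i, f^\natural\rangle/\langle f, f^\natural\rangle$ to lie in $\IQ(g,f)$, which is exactly the statement being proved, so the argument is circular. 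This is precisely why the paper does not argue this way: it quotes Garrett's equivariance theorem \cite{GarrettEquivariance}, proved by the doubling method (pullbacks of Siegel Eisenstein series with rational Fourier coefficients), which gives $\sigma\left(\langle h, f^\natural\rangle/\langle f, f^\natural\rangle\right) = \langle h^\sigma, \left(f^\sigma\right)^\natural\rangle/\langle f^\sigma, \left(f^\sigma\right)^\natural\rangle$ for cusp forms $h$ of weight $k>7$ with no multiplicity-one input (note your sketch never uses the hypothesis $k>7$, a sign that the essential input is missing), and it quotes \cite[Corollary 3.19]{Saha} for the CM case \eqref{equ:equivariantPetersson2}, with \cite{HPSS} to pass between $\Sp_6$ and $\GSp_6$.

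A secondary gap: $g$ is an arbitrary weight-$k$ form, so one must first replace $g$ by its cuspidal part and know this does not enlarge the coefficient field. The paper does this via the orthogonal decomposition $g=g^{\mr{cusp}}+g^{\mr{Eis}}$ of \cite[Theorem 27.14]{shar} and its stability under all of $\Gal(\IC/\IQ)$ (upgraded from $\Gal(\IC/\closure{\IQ})$ using \cite[Lemma 3.2.1.2]{harrisannals}); your sketch mentions ``the cuspidal part of $g$'' but never justifies that its Fourier coefficients lie in $\IQ(g,f)$, which your field-theoretic bookkeeping requires. Your reduction of the CM case to the first case via $f^\natural=f^\tau$ (up to scalars) is a reasonable idea, but as written it inherits the unproved Galois equivariance of the projector construction, whereas the paper obtains \eqref{equ:equivariantPetersson2} directly from Saha's equivariance statement.
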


\begin{proof}
By \cite[Theorem 27.14]{shar}, we can decompose $g$ uniquely as 
\[
g= g^{\mr{cusp}}+g^{\mr{Eis}},
\]
where $g^{\mr{cusp}}$ is a cusp form and $g^{\mr{Eis}}$ is orthogonal to all the cusp forms. Hence $\langle g, f\rangle=\langle g^{\mr{cusp}}, f\rangle$.  As noted in \cite[Proof of Theorem 9]{bouganis}, this decomposition is stable under $\Gal(\IC/\IQ)$, i.e. $g^\sigma= \left(g^{\mr{cusp}}\right)^\sigma+\left(g^{\mr{Eis}}\right)^\sigma$ with $\left(g^{\mr{cusp}}\right)^\sigma$ a cusp form and $\left(g^{\mr{Eis}}\right)^\sigma$ orthogonal to all the cusp forms for all $\sigma\in\Gal(\IC/\IQ)$.  (Shimura only proved in \cite[Theorem 27.14]{shar} that this decomposition is stable under the action of $\Gal(\IC/\closure{\IQ})$.  The stronger decomposition follows from Harris's later work on Eisenstein series \cite[Lemma 3.2.1.2]{harrisannals}, together with the stability of the space of cusp forms under this larger group.) So the Fourier coefficients of $g^{\mr{cusp}}$ lie in the same field as those of $g$.  Equation \eqref{equ:equivariantPetersson} now follows from the main theorem in \cite{GarrettEquivariance}, which says that for all $\sigma\in\Gal(\IC/\IQ)$,
\begin{align}\label{equ:GarrettEquivariance}
\sigma\left(\langle h, f^\natural\rangle/\langle f, f^\natural\rangle\right) = \langle h^\sigma, \left(f^\sigma\right)^\natural\rangle/\langle f^\sigma, \left(f^\sigma\right)^\natural\rangle
\end{align}
for all cusp forms $h$ of weight $k>7$.  If, furthermore, there is a constant $c$ such all the Fourier coefficients of $cf$ lie in CM extensions of $\IQ$, then Equation \eqref{equ:equivariantPetersson2} holds by \cite[Corollary 3.19]{Saha}, which says that for all $\sigma\in\Gal(\IC/\IQ)$,
\begin{align}\label{equ:SahaEquivariance}
\sigma\left(\langle h, f\rangle/\langle f, f\rangle\right) = \langle h^\sigma, f^\sigma\rangle/\langle f^\sigma, f^\sigma\rangle,
\end{align}
which also implies Equation \eqref{equ:GarrettEquivariance}.  While Equation \eqref{equ:GarrettEquivariance} is proved in \cite{GarrettEquivariance} for $\Sp_{2n}$, Equation \eqref{equ:SahaEquivariance} is proved in \cite[Corollary 3.19]{Saha} for $\GSp_{2n}$.  How to move between these two groups is explained in \cite{HPSS}.
\end{proof}

\subsubsection{Nearly holomorphic Siegel modular forms}\label{sec:nearlyholoGsp}
Similarly to many constructions of $L$-functions, the construction employed in this paper requires that we expand our focus to nearly holomorphic Siegel modular forms.  We recall the key facts needed for this paper.  A more detailed treatment of nearly holomorphic forms is presented in, e.g., \cite[\S 13]{shar}.

Let $r$ be a nonnegative integer, and let $\Gamma$ be a congruence subgroup of $\GSp_6(\ZZ)$.
A {\em nearly holomorphic Siegel modular form} of weight $2r$ and level $\Gamma$ on $\GSp_6$ is a real-analytic function
\begin{align*}
f:\CH_3\rightarrow \IC
\end{align*}
such that for all $\gamma\in \Gamma$ and $z\in \CH_3$
\begin{align*}
f(z) =\nu(g)^{-r} j(\gamma, z)^{-2r}f(\gamma z),
\end{align*}
and such that $f$ can be written as a polynomial in the entries of ${\mr{Im}(z)}^{-1}$ with coefficients being holomorphic functions on $\CH_3$.  By a slight abuse of notation, we will denote the entries of ${\mr{Im}(z)}^{-1}$ by $Y_{i,j}^{-1}$ or $1/Y_{i,j}$, but we warn the reader that the entries of ${\mr{Im}(z)}^{-1}$ are not inverses of the entries of ${\mr{Im}(z)}$.  Every nearly holomorphic Siegel modular form $f$ of level $\Gamma^0(M)$ admits a polynomial $q$-expansion of the form
\begin{align*}
f = \sum_{T \geq 0}a_f(T)e^{2\pi i\tr(Tz)},
\end{align*}
with $a_f(T)$ are polynomials in the variables $Y_{i,j}^{-1}$, supported on positive definite, half-integral symmetric matrices $T$.  We say that a nearly holomorphic Siegel modular form $f$ is {\em defined over} a $\mathbb{Z}[1/M]$-algebra $R$ if the coefficients of all $a_f(T)$ are polynomials in the variables $Y_{i,j}^{-1}$ with coefficients in $R$.

\subsubsection{Holomorphic projection}  
The domain for the Petersson inner product introduced in Section \ref{sec:petersson} can be expanded to include nearly holomorphic forms.  In this context, we recall key facts about {\em holomorphic projection}.  For more details about holomorphic projection, see, e.g., \cite[\S 15]{shar}, \cite{sturm-critical}, \cite[\S 3]{shimura-half}, \cite[Section 2.4]{CouPan}, or \cite[Section 3.7]{ZLiuFour}.

The {\em holomorphic projection} $\holoprojop$ of a nearly holomorphic form $g$ is the holomorphic Siegel cusp form $Hg$\index{$H$, holomorphic projection} such that 
\begin{align}\label{equ:holoproj}
\langle f,g \rangle =\langle f,Hg \rangle
\end{align}
for all holomorphic cusp forms $f$ of weight $r$ and level $\Gamma$.
In order for $Hg$ to be unique, we need to require $Hg$ to be a cusp form.  Otherwise, we could add any Eisenstein series $E$ to $Hg$ and still get
\[
\langle f,g \rangle =\langle f,Hg +E \rangle.
\]

If $\set{f_i}$ is a basis of the space of  holomorphic Siegel cusp forms of weight $2r$ and level $\Gamma$ which is orthonormal for the Petersson product, then 
\[
Hg=\sum_i \overline{\langle f_i,g \rangle} f_i.
\]
There is an explicit recipe for the Fourier coefficients of $Hg$ \cite[Theorem 2.16]{CouPan}.  Specializing \cite[3.7]{ZLiuFour} to the case $\mathcal{U}=\set{2r}$, we have
\begin{lem}\label{lem:holoprojcoeffs}
If $g$ is a nearly holomorphic Siegel modular form defined over a $\mathbb{Q}$-algebra $R$, then the $q$-expansion coefficients of $Hg$ lie in $R$.
\end{lem}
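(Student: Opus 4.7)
The plan is to produce an explicit $\IQ$-linear formula expressing the Fourier coefficients of $\holoprojop g$ in terms of those of $g$. Concretely, the $T$-th Fourier coefficient of $g$ has the form $a_g(T)(Y^{-1})\,e^{2\pi i\tr(Tz)}$, where $a_g(T)$ is a polynomial in the entries $Y_{i,j}^{-1}$ with coefficients in $R$ and $Y=\imaginary(z)$. I aim to show that $a_{\holoprojop g}(T)\in R$ for every $T>0$.

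First I would use the characterisation (\ref{equ:holoproj}) together with the Poincar\'e-series realisation of the $T$-th Fourier coefficient of a weight $2r$ cusp form, in the style of Sturm \cite{sturm-critical}. Unfolding the pairing $\langle g, P_T\rangle$ against the Poincar\'e series $P_T$ reduces the computation of $a_{\holoprojop g}(T)$ to an integral over the cone of positive definite $Y\in\Sym_3(\IR)$ of the shape
\begin{align*}
a_{\holoprojop g}(T)\;=\;c_T\int_{Y>0} a_g(T)(Y^{-1})\,\det(Y)^{2r-4}\,e^{-4\pi\tr(TY)}\, dY,
\end{align*}
for a normalising constant $c_T$. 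Writing $a_g(T)(Y^{-1})=\sum_\mu b_{g,T}^{(\mu)}(Y^{-1})^\mu$ with $b_{g,T}^{(\mu)}\in R$, this expresses
\begin{align*}
a_{\holoprojop g}(T)\;=\;\sum_\mu \kappa_{2r}(T,\mu)\, b_{g,T}^{(\mu)},
\end{align*}
where each $\kappa_{2r}(T,\mu)$ is a Gindikin-type gamma integral multiplied by $c_T$.

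Second, I would evaluate $\kappa_{2r}(T,\mu)$ using the classical formula on the symmetric cone of positive definite matrices (cf.\ \cite{ShimConfluent}, and in the same spirit as the computation already carried out in the proof of Lemma~\ref{lemma:implicitconstant}). After the change of variables $Y\mapsto (4\pi)^{-1}Y$, each such integral becomes a product of values of the multivariable gamma function and a monomial in $\pi$ and $\det(T)$. The normalisation $c_T$ is precisely the one that forces these transcendental factors to cancel, leaving $\kappa_{2r}(T,\mu)\in\IQ$ depending only on $T$, $\mu$ and $2r$; this is exactly the content of the case $\mathcal{U}=\{2r\}$ of \cite[\S3.7]{ZLiuFour}. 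It follows that $a_{\holoprojop g}(T)$ is an $R$-linear combination of rational numbers, hence lies in $R$, as required.

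The main obstacle is the bookkeeping: one must match the normalisation of the Petersson measure $d^*z$ on $\Gamma\backslash\CH_3$ used in \S\ref{sec:petersson} with the normalisation of the Gindikin gamma integrals, and check that the scalar-weight hypothesis (which is what $\mathcal{U}=\{2r\}$ encodes) produces complete cancellation of all $\Gamma$- and $\pi$-factors in the ratio $\kappa_{2r}(T,\mu)/\kappa_{2r}(T,0)$. Once this normalisation issue is handled, rationality of the coefficients $\kappa_{2r}(T,\mu)$ is automatic from the gamma-integral formula, and the lemma follows.
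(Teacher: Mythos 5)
Your argument is correct, and it is essentially the standard proof of the fact the paper invokes: the paper itself gives no independent argument for Lemma \ref{lem:holoprojcoeffs}, simply citing the explicit recipe of \cite[Theorem 2.16]{CouPan} and the rationality statement of \cite[\S 3.7]{ZLiuFour} (specialized to $\mathcal{U}=\{2r\}$), whose proofs proceed exactly by your Poincar\'e-series unfolding and Gindikin gamma-integral evaluation. The only point worth making explicit is the usual convergence hypothesis on the weight needed for the Poincar\'e series and the unfolding (satisfied in the paper's application, where $2r\geq 12$), which is already built into the cited results.
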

  It is known that the same statement does not apply to $\mathbb{Z}$-algebras, because holomorphic projection introduces denominators.

\subsection{Modular forms on the group $G$}\label{sec:modularformsG}
Although modular forms on $G$ are not explicitly defined in the literature, it will be convenient for us to introduce this terminology.  For a more detailed introduction to modular forms in related settings, the reader might consult the survey article \cite[Section 6]{pollackNotices} or \cite[Section 3.4]{EischenAWS}.  For $g\in G^+(\IR)$ and functions $f:\CH\rightarrow\IC$, we define
\begin{align*}
(f|_{2r}g)(Z) = \nu(g)^{-r}j(g, Z)^{-2r}f(gZ)
\end{align*}
\begin{defi}\label{def:mformsonH}
Let $r$ be a nonnegative integer, and let $\Gamma$ be a congruence subgroup of $G(\ZZ)$.  A {\em modular form on $G$} of weight $2r$ and level $\Gamma$ is a holomorphic function
\begin{align*}
f: \CH\rightarrow \IC
\end{align*}
such that 
\begin{align}\label{equ:modularG}
(f|_{2r}\gamma)(Z)=f(Z)
\end{align}
 for all $Z\in \CH$ and $\gamma\in \Gamma$.  Following the conventions of \cite[Section 6]{pollackNotices}, we also require that the function $|\det(\imaginary(Z))^rf(Z)|$ is of moderate growth on $G(\IR)$.
\end{defi}
When a function $f$ on $G$ satisfies Equation \eqref{equ:modularG}, we call $f$ {\em modular} (even if $f$ is not holomorphic).  The modular forms on $G$ arising in this paper are Eisenstein series, which we introduce in Section \ref{sec:Eseries}.  It will be useful to reformulate Definition \ref{def:mformsonH} in terms of functions on $G$.  Similarly to the case of Siegel modular forms, we associate to each modular form $f$ on $G$ a function 
\begin{align*}
\varphi_f: G^+(\IR)\rightarrow \IC
\end{align*}
by
\begin{align*}
\varphi_f(g) = \nu(g)^{-r}j(g, i)^{-2r}f(gi).
\end{align*}
Then 
\begin{align}\label{equ:mfonG}
\varphi_f(\gamma g k_\infty) = j(k_\infty, i)^{-2r}\varphi_f(g)
\end{align}
 for all $k_\infty\in K_\infty$ and $g\in G^+(\IR)$.  So modular forms on $G$ can be identified with the subspace of the functions on $G^+(\IR)$ satisfying Equation \eqref{equ:mfonG} meeting a holomorphy condition and a moderate growth condition.
Similarly to the case of classical modular forms, it is also sometimes convenient to view modular forms on $G$ as functions on $G(\adeles)$.  This is the viewpoint taken in \cite[Section 7]{pollack1} (although without using the terminology {\em modular form}), which we sometimes employ in the discussion of Eisenstein series in Section \ref{sec:Eseries}.  In particular, given a modular form $f$ on $G$ and a compact open subgroup $K\subset G(\adeles_f)$, we define
\begin{align*}
\phi_f: G(\IQ)\backslash G(\IR)/Z\left(G\left(\adeles\right)\right)K\rightarrow \IC,
\end{align*}
with
\begin{align*}
\phi_f(g):=\left(f|_{2r} g_\infty\right)(i) = \varphi_f(g_\infty),
\end{align*}
for $g= g_\IQ g_\infty k$ with $g_\IQ\in G(\IQ)$, $g_\infty \in G^+_6(\IR)$, and $k\in K$.
Similarly to the case of Siegel modular forms discussed above, the map $f\mapsto\phi_f$ gives an injection from the space of Siegel modular forms of weight $2r$ to the space of functions $\phi$ on $G(\IQ)\backslash G(\IR)/Z\left(G\left(\adeles\right)\right)K$ such that $\phi(gk_\infty) = j(k_\infty, i)^{-2r}\phi(g)$ for all $k_\infty\in K_\infty$.

\begin{rmk}
By the properties given in Section \ref{sec:Hss}, we immediately have that the restriction to $\GSp_6$ of each modular form of weight $2r$ on $G$ is a Siegel modular form of degree $3$ and weight $2r$.  Similarly, if a function on $G$ satisfies the modularity property but is not necessarily holomorphic, the same is true of its restriction to $\GSp_6$.  Later, in Section \ref{sec:restrictionfromG}, we will see that if a modular form on $G$ has a Fourier expansion with coefficients in a ring $R$, then the same is true of its restriction to $\GSp_6$.  While we do not need a complete theory of nearly holomorphic modular forms on $G$ for this paper, we note that it also similarly follows from our work in Section \ref{sec:restrictionfromG} that if a function $f$ on $G$ satisfies the modularity property and can be expressed analogously to the nearly holomorphic forms defined over a ring $R$ in Section \ref{sec:nearlyholoGsp}, then the restriction of $f$ to $\GSp_6$ is a nearly holomorphic modular form on $\GSp_6$ defined over $R$.
\end{rmk}

\section{\texorpdfstring{Eisenstein series on $G$}{Eisenstein series on G}}\label{sec:Eseries}
We introduce certain Eisenstein series on the group $G$ that was defined in Section \ref{sec:gpG}.  In Section \ref{sec:EisDef}, we define the Eisenstein series with which we work.  In Section \ref{sec:FexpnG}, we compute their Fourier expansions.  In Section \ref{sec:restrictionfromG}, we describe the Fourier coefficients of the restriction to $\GSp_6$ of a modular form on $G$. In Section \ref{sec:diffop}, we summarize the effect of weight-raising differential operators.

\subsection{\texorpdfstring{Definition of certain Eisenstein series on $G$}{Definition of certain Eisenstein series on G}}\label{sec:EisDef}
In this section, we define Eisenstein series $E_{2r, \chi}$
that are one of the key ingredients in the the construction of the $L$-functions in this paper.  After some necessary setup, we define $E_{2r,\chi}$ precisely in Equation \eqref{equ:Eseriesdefn}.

\subsubsection{\texorpdfstring{Character $\chi$ of the parabolic subgroup $P$}{Character chi of the parabolic subgroup P}}\label{sec:charchiofP}

Let $\chi$\index{$\chi$} be a Dirichlet character  modulo the integer $M$.  Recall the element $f=(0, 0, 0, 1)\in W$.  We view $\chi$ as a character of the parabolic subgroup $P$ via $\chi(\gamma)=\chi(ad)$, where $d$, resp. $a$, is the scalar which gives the action of each element $\gamma\in P$ on $\IQ f$, resp $W/\left(f\right)^{\perp}$.  A straightforward calculation shows that $\nu(\gamma)=ad$ for each $\gamma\in P$ and that, furthermore, $\chi(\gamma)=\chi(a^2)$ for each element $\gamma$ in the center of $P$.  We will sometimes consider $\chi$ adelically and write it as  the restricted tensor product $\otimes' \chi_v$, for $v$ ranging over all the places of $\m Q$.

\subsubsection{\texorpdfstring{Eisenstein series as functions of $G(\adeles)$}{Eisenstein series as functions of G(A)}}\label{sec:AdelicEisensteinseries}
In this section, we introduce the Eisenstein series $E_{2r, \chi}^\ast$ that appears in Equation \eqref{equ:I2r}.  These are similar to the Eisenstein series $E_{2r}^\ast$ introduced in \cite{pollack1}, except that we also allow a character $\chi$ of $P$ as introduced in Section \ref{sec:charchiofP}, allow level $\Gamma^0(M)$ (instead of just level $1$ as in \cite{pollack1}),  and normalize the series slightly differently. 
From now on, we suppose that $\chi^2=1$; this ensure that the (restriction to $\mathrm{GSp}_6$ of the) Eisenstein series descends to $\mathrm{PGSp}_6$. These Eisenstein series will be defined in terms of particular induced sections $f_v(\gamma_v,s)$\index{$f_v(\gamma_v,s)$} in
$
\mr{Ind}_{P(\m Q_v)}^{G(\m Q_v)}(\chi_v(da)\vert da \vert^s)$:
\begin{itemize}\label{equ:fvintegral}
\item{For $v \nmid M\infty$, we take $f_v(\gamma_v,s)$ to be the unique unramified section with support on the maximal compact subgroup $G(\m Z_v)$.
Explicitly, we can write
\begin{align}
f_v(\gamma_v,s)= \vert \nu(\gamma_v) \vert^s \chi_v(\nu(\gamma_v)) \int_{\m Q_v^{\times} } \Phi_v(tf\gamma_v)\vert t \vert^{2s} \chi(t)^{2} \textup d t, 
\end{align}
for \index{$\Phi_v$}$\Phi_v$ as in \cite[\S 5]{pollack1}.  A calculation similar to the one in \cite[\S 7.2]{pollack1}, but taking into account the difference in normalization in \cite[p. 1407 and first paragraph of \S7.3]{pollack1}, shows that on the maximal compact $G(\m Z_v)$, this section takes  the value
\[
\frac{1}{(1-\chi(v)v^{-2s})}.
\]
}
\item{For $  v\mid M$, we define $f_v(\gamma_v,s)$ as in Equation \eqref{equ:fvintegral}, except that in this case we take $\Phi_v$ to be the characteristic function of $\m Z_v^{\times} f + v^{\mr{val}_v(M)} W \otimes_{\m Z}\m Z_v$, where $\mr{val}_v$\index{$\mr{val}_v$} denotes the usual $p$-adic valuation such that $\mr{val}_v(p)=1$ for each finite place $v=p$.
This implies that the section is supported only on elements of $G(\m Z_v)$ that reduce to elements of $P(\m Z / v^{\mr{val}_v(M)} \m Z)$.}
\item{For $v= \infty$, we take $f_{\infty,2r}$\index{$f_{\infty,2r}$} to be the unique section satisfies $f_{\infty,2r}(k, s) =j(k,i)^{2r}$ for all $k$ in the maximal compact $K_\infty$ of $G(\m R)$, similarly to \cite[first paragraph of \S 7.3]{pollack1}.}
\end{itemize}

Similarly to \cite[\S 7.3]{pollack1}, we let 
\begin{align*}\index{$J(\gamma,Z)$}
J(\gamma,Z):=\nu(\gamma)j(\gamma,Z),
\end{align*}
and we define the Eisenstein series $E^*_{2r,\chi}(g, s)$, for $g\in G$ and $s\in \IC$ with $\mathrm{Re}(s)>\!\!>0$, by
\begin{align*}\index{$E^*_{2r,\chi}$}
\frac{E^*_{2r,\chi}(g,s)}{D_{B}^s C_{\infty}}=& L^{(D_{B})}(\chi,2s-2)L(\chi,2s-4) \sum_{\gamma \in P(\m Q)\setminus G(\m Q)} {\vert \nu(\gamma g)\vert}^{s} \frac{{J(\gamma g,i)}^{2r}}{\vert {J(\gamma g,i)}\vert^{2s+2r}} \prod_{v \nmid \infty} f_v(\gamma g,s) \\
 =  &\sum_{\gamma \in  P(\m Q)\setminus G(\m Q)} {\nu(\gamma g)}^{r-s/2} {\overline{\nu(\gamma g)}}^{-r-s/2} {j(\gamma g,i)}^{r-s}{\overline{j(\gamma g,i)}}^{-s-r} \prod_{v \nmid \infty }f_v(\gamma g,s), 
\end{align*} 
where (applying the formula between \cite[Equations (7.4) and (7.5)]{pollack1})
\[ 
C_\infty=C_{\infty}(s,r) = \pi^{-(3s+3r-6)}\Gamma(s+r)\Gamma(s+r-2)\Gamma(s+r-4)\index{$C_\infty(s,r)$}
\]
and  $L^{(D_{B})}(\chi,s)$ denotes the partial $L$-function with factors removed at primes dividing $D_B$.

\begin{rmk}
Although we do not include Nebentypus here, the discussion could be amended to include $\Gamma_0$-Nebetypus $\varphi$ by replacing $\chi$ by the character $\gamma \mapsto \chi(ad)\varphi(ad^2)$.
\end{rmk}

\subsubsection{\texorpdfstring{Eisenstein series as functions of $Z$}{Eisenstein series as functions of Z}}\label{sec:EseriesZ}
Given $Z=X+iY\in \CH$, we write $Z=g_{\infty} i$ with $g_{\infty} \in G^1(\m R)$.  Applying Lemma \ref{twokeyequs}\eqref{seconddiff} to $i\in \CH$ and $g_\infty \in G^1(\m R)$, we obtain
\begin{align*}
\vert j(g_\infty,i) \vert^2 = N (Y)^{-1}.
\end{align*}
We transform the Eisenstein series  $E^*_{2r,\chi}$ into a function of the  complex variable $Z$.
\begin{lem}
With $\Gamma_\infty$ defined as in Equation \eqref{equ:Gammainfty} and $g_\infty$ the Archimedean component of $g\in G(\adeles)$, we have
\begin{align*}
{E^*_{2r,\chi}(g,s)}= E^*_{2r,\chi}(Z,s),
\end{align*}
where
\begin{align*}\index{$E^*_{2r,\chi}(Z,s)$}
E^*_{2r,\chi}(Z,s): = & {D_{B}^s C_{\infty}} L(\chi,2s)L^{(D_{B})}(\chi,2s-2)L(\chi,2s-4)\times \\
   & \times \sum_{\gamma \in  \Gamma_{\infty} \setminus \Gamma_G^0(M)} \chi^{-1}(\gamma) {\nu(\gamma)}^{-s} {\overline{j(\gamma, Z)}}^{-2r} \frac{N(Y)^{s-r}}{\vert {j(\gamma ,Z)}\vert^{2s-2r}}.
\end{align*}
\end{lem}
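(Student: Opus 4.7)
The plan is to unfold the adelic sum defining $E^*_{2r,\chi}(g,s)$ into the classical sum defining $E^*_{2r,\chi}(Z,s)$ in three steps: reducing to the Archimedean component, analyzing the finite local sections to extract the level structure and the missing $L$-factor, and computing the Archimedean contribution.

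First, I would use strong approximation for $G$ to reduce to evaluating $E^*_{2r,\chi}(g,s)$ at $g \in G(\adeles)$ whose finite component lies in the level-$M$ compact open subgroup attached to $\Gamma_G^0(M)$; by the built-in $K$-invariance and the support properties of the finite sections $f_v$, the value then depends only on $g_\infty$, and we write $Z = g_\infty i$.

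Second, at each finite place $v \nmid M D_B \infty$, I would apply the Iwasawa decomposition $G(\IQ_v) = P(\IQ_v) G(\IZ_v)$ together with the explicit form of $\Phi_v$ to compute $f_v(\gamma,s) = \chi_v(\nu(p)) |\nu(p)|_v^s (1-\chi(v)v^{-2s})^{-1}$ for $\gamma = p\kappa$ with $p \in P(\IQ_v)$ and $\kappa \in G(\IZ_v)$. At the ramified places $v \mid M$, the support condition on $\Phi_v$ further restricts $\kappa$ to map into $P(\IZ/v^{\mr{val}_v(M)}\IZ)$, and at $v \mid D_B$ the analysis accounts for the partial $L$-factor convention. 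Multiplying across all finite places, the factors $(1-\chi(v)v^{-2s})^{-1}$ assemble into $L(\chi,2s)$, while the product formulas $\prod_v|\nu(\gamma)|_v = 1$ and $\prod_v\chi_v(\nu(\gamma))=1$ for $\gamma\in G(\IQ)$ convert the accumulated local characters into $\chi^{-1}(\gamma)\nu(\gamma)^{-s}$ on the Archimedean side; the support conditions simultaneously collapse the $P(\IQ)\setminus G(\IQ)$-sum to one indexed by $\Gamma_\infty \setminus \Gamma_G^0(M)$.

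Third, I would compute the Archimedean factor. By Lemma \ref{lemma:automorphyfactor}, $j(\gamma g_\infty, i) = j(\gamma, Z)\, j(g_\infty, i)$ and $\nu(\gamma g_\infty) = \nu(\gamma)\nu(g_\infty)$; specializing Lemma \ref{twokeyequs}\eqref{seconddiff} to $Z=g_\infty i$ yields $|j(g_\infty, i)|^2 = \nu(g_\infty)^{-1} N(Y)^{-1}$. Substituting these identities into the Archimedean section
\[
\nu(\gamma g_\infty)^{r-s/2}\overline{\nu(\gamma g_\infty)}^{-r-s/2}\,j(\gamma g_\infty,i)^{r-s}\,\overline{j(\gamma g_\infty,i)}^{-s-r}
\]
and using the algebraic identity $j(\gamma,Z)^{r-s}\overline{j(\gamma,Z)}^{-s-r} = \overline{j(\gamma,Z)}^{-2r}|j(\gamma,Z)|^{2r-2s}$ produces the claimed expression, with the $g_\infty$-dependence collapsing into the overall scalar factor $N(Y)^{s-r}$.

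The main obstacle is the careful bookkeeping of the character $\chi$ and the level subgroup, together with verifying that the classical prefactor $D_B^s C_\infty L(\chi,2s) L^{(D_B)}(\chi,2s-2) L(\chi,2s-4)$ is reproduced: the missing $L(\chi,2s)$ must emerge entirely from the product of unramified local sections, and the ramified contributions at $v\mid M D_B$ must be matched with the partial $L$-factor conventions used in \cite[\S 7.3]{pollack1}. These steps are standard but delicate, and the normalizations of $f_v$ and $f_{\infty,2r}$ have been tuned precisely so that the identification is exact.
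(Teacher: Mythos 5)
Your proposal is correct and follows essentially the same route as the paper: reduce to the Archimedean component via the $K$-invariance and support of the finite sections, let the unramified sections assemble the missing $L(\chi,2s)$, and use the cocycle relation together with Lemma \ref{twokeyequs}\eqref{seconddiff} (giving $|j(g_\infty,i)|^2=N(Y)^{-1}$) to produce the factor $N(Y)^{s-r}$ and the stated exponents. The only difference is that where you say the support conditions ``collapse'' the sum from $P(\IQ)\backslash G(\IQ)$ to $\Gamma_\infty\backslash\Gamma_G^0(M)$, the paper makes this reindexing precise by citing \cite[Lemma 7.3]{Tsao} together with the support of $f_v$ at the bad places.
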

\begin{proof}
A calculation similar to the one in \cite[\S 3.1.7]{Urb} gives
\begin{align*}
\frac{E^*_{2r,\chi}(g,s)}{D_{B}^s C_{\infty}}=&  \overline{J(g_{\infty},i)}^{2r} \overline{\nu(g_{\infty})}^{-r} \sum_{\gamma \in P(\m Q)\setminus G(\m Q)} {\vert \nu(\gamma g_{\infty})\vert}^{s} \frac{{J(\gamma g_\infty,i)}^{2r}}{\vert {J(\gamma g_\infty,i)}\vert^{2s+2r}}\prod_{v \nmid \infty} f_v(\gamma ,s) \\
         =& \sum_{\gamma \in P(\m Q)\setminus G(\m Q)}  \vert {J(g_\infty,i)}\vert^{2r-2s}  \frac{ \nu(\gamma)^s {J(\gamma ,Z)}^{2r} }{ \vert {J(\gamma ,Z)}\vert^{2s+2r}} \prod_{v \nmid \infty} f_v(\gamma,s)\\
   =&L(\chi,2s)L^{(D_{B})}(\chi,2s-2)L(\chi,2s-4) \times\\ & \times \sum_{\gamma \in  \Gamma_{\infty} \setminus \Gamma_G^0(M)} \chi^{-1}(\gamma) {\nu(\gamma)}^{-s} {j(\gamma,Z)}^{r-s} {\overline{j(\gamma, Z)}}^{-s-r} N(Y)^{s-r}\\
   = &L(\chi,2s)L^{(D_{B})}(\chi,2s-2)L(\chi,2s-4)\times \\
   & \times \sum_{\gamma \in  \Gamma_{\infty} \setminus \Gamma_G^0(M)} \chi^{-1}(\gamma) {\nu(\gamma)}^{-s} {\overline{j(\gamma, Z)}}^{-2r} \frac{N(Y)^{s-r}}{\vert {j(\gamma ,Z)}\vert^{2s-2r}}.
\end{align*}
To change the index of summation, we use \cite[Lemma 7.3]{Tsao}, as well as the fact that $f_v$ is supported in the big cell at bad places $v$. 
\end{proof}
The function $E^*_{2r,\chi}(Z,s)$ is modular
of weight $2r$ and level $\Gamma_G^0(M)$ as a function of $Z$, and it has central character $\chi$.  When $s=r$, the form $E^*_{2r,\chi}(Z,r)$ is antiholomorphic as a function of $Z$.
We will especially work with the Eisenstein series $E_{2r,\chi}(Z,s)$ defined by
\begin{align}\label{equ:Eseriesdefn}\index{$E_{2r,\chi}$}
E_{2r,\chi}(Z,s) = & \overline{E^*_{2r,\chi}(Z,s)},
\end{align}
as well as the holomorphic Eisenstein series
\begin{align}\label{equ:holoEseries}
E_{2r, \chi}(Z)&:=E_{2r, \chi}\left(Z, r\right)\nonumber\\
&={D_{B}^s C_{\infty}(r,r)} L(\chi,2r)L^{(D_{B})}(\chi,2r-2)L(\chi,2r-4) \sum_{\gamma \in  \Gamma_{\infty} \setminus \Gamma_G^0(M)} \chi(\gamma)   \frac{\nu(\gamma)^{-r} }{{j(\gamma, Z)}^{2r}}.
\end{align}

\begin{rmk}\label{rmk:introducediffop}
In \cite[Section 7.3]{pollack1}, Pollack introduces a weight-raising differential operator $\mc D$\index{$\mc D$} that acts on modular forms on $G$. 
By  Theorem 7.7 of {\it loc. cit.} applied in the case $s=r$, 
\begin{align}\label{eq:MaassShimura}
\mc D^t  E^*_{2r,\chi}(Z,r) &= E^*_{2r+2t,\chi}(Z,r)\nonumber\\
\mc D^t  E_{2r,\chi}(Z,r) &= E_{2r+2t,\chi}(Z,r)
  \end{align}
  for all nonnegative integers $t$.  (Here, $\mc D^t$ denotes $\mc D$ applied $t$ times.)
Sections \ref{sec:FexpnG} and \ref{sec:restrictionfromG} establish the algebraicity of the Fourier coefficients of the holomorphic Eisenstein series $E_{2r,\chi}(Z)$ and of its restriction to $\GSp_6$, respectively. Section \ref{sec:diffop} then discusses algebraic properties of the restriction to $\GSp_6$ of the forms in the image of $\mc D^t$.
\end{rmk}

\subsection{\texorpdfstring{Main Results for Fourier expansions of Eisenstein series on the group $G$}{Main Results for Fourier expansions of Eisenstein series on the group G}}\label{sec:FexpnG}
In this section, we compute the Fourier expansion of the holomorphic Eisenstein series $E_{2r, \chi}$, and for convenience of notation we also work with
\begin{align}\index{$G_{2r, \chi}$}\label{equ:G2rchidefn}
G_{2r, \chi}(Z):=\frac{{E}_{2r,\chi}(Z)}{ {D_{B}^r C_{\infty}(r,r)}  L(\chi,2r)L^{(D_{B})}(\chi,2r-2)L(\chi,2r-4)}.
\end{align}
Our main result for this section is Theorem \ref{thm:Eseriescoeffs}.  
\begin{rmk}\label{rmk:Gausssumvalue}
We denote by $g(\chi)$\index{$g(\chi)$} the Gauss sum of $\chi$, i.e.
 \begin{align*}
g(\chi) := \sum_{n=1}^c\chi(n)e^{2\pi i n/c}
\end{align*}
for $c$ the modulus of $\chi$.  Many useful facts about Gauss sums are summarized in \cite[Chapter 9]{davenport}.  We briefly recall those needed here.  We denote by $c_\chi$\index{$c_\chi$} the conductor of the primitive character $\chi_0$\index{$\chi_0$} inducing $\chi$.  We have $g(\chi) = \mu(r)\chi_0(r)g(\chi_0),$ for $r$ the integer such that $rc_\chi$ is the conductor of $\chi$ and $\mu$ the M\"obius function (by \cite[p. 69]{davenport}).  If $\chi_0$ is nontrivial, we have $g(\chi_0)\overline{g(\chi_0)}=c_\chi$.  We also have $g(\overline{\chi_0})= \chi(-1)\overline{g(\chi_0)}$.  So in the special case where $\chi$ is quadratic (i.e. $\chi^2$ is trivial on $(\ZZ/c\ZZ)^\times$), we have that the value $g(\chi)^2$ is $c_\chi$ or $-c_\chi$.  When $\chi_0$ is trivial, we have $g(\chi_0)=-1$.  We set $c_\chi^\ast :=g(\chi)^2\in\ZZ$.    This is consistent with the notation used in the familiar setting of Legendre symbols $\left(\frac{\cdot}{p}\right)$ for an odd prime $p$, in which case we have $g(\chi)=\sqrt{p^\ast}$ with $p^\ast = (-1)^{\frac{p-1}{2}}p$.
\end{rmk}
\begin{thm}\label{thm:Eseriescoeffs}
For $2r >10$, the Fourier coefficients of $G_{2r, \chi}$ 
lie in $\IQ(\chi, g(\chi))$, or more precisely, in $\bigcup_{j=0}^3\left(\IQ(\chi)g(\chi)^{-j}\right)\subset\IQ(\chi, g(\chi))$, i.e. each Fourier coefficient is of the form $\alpha g(\chi)^{-j}$ with $j\in\{0,1,2,3\}$ and $\alpha\in\IQ(\chi)$.
\end{thm}

\begin{rmk}
In this paper, we only use the coarser statement that the Fourier coefficients lie in $\IQ(\chi, g(\chi))$.  We anticipate that the finer statements in Theorem \ref{thm:Eseriescoeffs} and related statements below about the coefficients lying in a particular subspace might be useful for applications to Deligne's conjectures and to Iwasawa theory, though, hence our recording them here.
\end{rmk}
\begin{rmk}
We have
\begin{align*}
G_{2r, \chi}(Z)&:=\sum_{\gamma \in  \Gamma_{\infty} \setminus \Gamma_G^0(M)} \chi(\gamma)   \frac{\nu(\gamma)^{-r} }{{j(\gamma , Z)}^{2r}}.
\end{align*}
So we need to study the Fourier coefficients of $\sum_{\gamma \in  \Gamma_{\infty} \setminus \Gamma_G^0(M)} \chi(\gamma)   \frac{\nu(\gamma)^{-r} }{{j(\gamma , Z)}^{2r}}$ for $2r>10$.  For the remainder of this section, we assume $2r>10$.
\end{rmk}

\subsubsection{Strategy}
Our proof of Theorem \ref{thm:Eseriescoeffs} has several steps, which we complete in Sections \ref{sec:decompositionGj} through \ref{sec:localfactorrk3}.  In Section \ref{sec:decompositionGj}, we decompose $G_{2r, \chi}(Z)$ as a sum of forms $G^{(j)}$, $0\leq j\leq 3$, each of which has a Fourier expansion of the form
$\sum_{h\in H_3(B)^\vee}a^{(j)}_hq^h,$
where $q^h=e^{2\pi i \tr(Z, h)}$, $H_3(B)^\vee$ is the $\ZZ$-dual of $H_3(B)$, and $a^{(j)}_h\in \IC$.
Our computation of the Fourier coefficients $a_h^{(j)}$  has two parts: an infinite part (completed in Section \ref{sec:factoratinfty}) and a finite part (begun in Section \ref{sec:finitepart}).  The computation of the finite part is further subdivided into four additional pieces: the computation of the Fourier coefficients at $h\in H_3(B)^\vee$ of rank $0, 1, 2,$ and $3$ (with ranks $<3$ handled in Section \ref{sec:localfactorrklessthan3} and rank $3$ in Section \ref{sec:localfactorrk3}).  In fact, similarly to the strategy in \cite{Tsao, KimExceptional}, each $G^{(j)}$ will be defined in such a way that $a_h^{(j)}=0$ whenever $h$ is not of rank $j$.  Furthermore, our computations will show that $a_h^{(j)}\in\IQ(\chi)g(\chi)^{-j}$.

\begin{rmk}
For proofs of algebraicity and $p$-adic interpolation, it is sufficient to show that the coefficients $a_h^{(j)}$ are polynomials in $h$ with coefficients in $\ZZ$.  Our approach, which is inspired by \cite{KarelFourier}, happens to give a more precise description of these polynomials than is strictly necessary for our applications to $L$-functions.
\end{rmk}

\subsubsection{\texorpdfstring{The forms $G^{(j)}$}{The forms G^(j)}}\label{sec:decompositionGj}
In this section, similarly to the strategy of \cite{Tsao, KimExceptional}, we decompose $G_{2r, \chi}(Z)$ as a sum of forms $G^{(j)}$ whose Fourier coefficients we will compute in the following sections.

We begin by introducing some conventions.  For $X \in H_3(B)$ and $v$ a rational prime, we write \index{$\mr{val}_{v}$}$\mr{val}_{v}(X) = \mr{min}(\mr{val}_{v}(x_{i,j}))$, and we define 
\index{$\kappa(X)$} \[ \kappa(X):= \prod_v  v^{-\mr{min}(0,\mr{val}_{v}(X), \mr{val}_{v}(X^{\#}), \mr{val}_{v}(N(X)) )}.\]
Recall from Equation \eqref{enofZ} that
$(1,0,0,0)n(X)=(1,X,X^{\#},N(X)).$ 
By \cite[Lemma 4.3]{KarelFourier}, we also have $\chi({n}(X)) = \chi (\kappa(X))$.
The factor $\kappa(X)$
defined in \cite[p.~522--523]{Baily}  as the determinant of the adjoint action of $n(X)$ on the unipotent of $P$ (see also \cite[\S 7.2]{Tsao}, where it is instead denoted by $c$, and \cite[\S 4]{KarelFourier}) coincides with ours by \cite[Lemma 4.3]{KarelFourier}. 
For each $a \in B$, we define an element $(a)_{i,j} \in \GL(H_3(B))$ ($i \neq j$) via
$$(a)_{i,j}.X = (\mr{Id}_3 + a^{*} e_{j,i})X (\mr{Id}_3 + a e_{i,j}),$$
where $e_{i, j}$\index{$e_{ij}$} denotes the elementary matrix with $1$ in the $ij$-position and $0$ everywhere else.
We let $L$\index{$L$, subgroup of $G$} be the subgroup of $G$ generated by all $\mu=(a)_{i,j}$ with $a \in B$, $  1\leq i \neq j \leq 3$. We embed it in $G$ via 
$(a,b,c,d)\mu=(a,\mu.b,\mu^*.c,d)$. This is (part of) the Levi of $P$. 
Similarly to the proof of \cite[Lemma 7.3]{Tsao}, every element of $P(\m Q)\setminus G(\m Q)$ is then represented (uniquely) by an element of the form $\iota_j n(X) \mu$ with $X$ in $H_3(B)$ and $\mu \in L(\m Q)$.

Similarly to the strategy of \cite{Tsao,KimExceptional}, we write
$$ 
G_{2r, \chi}(Z) = G_{2r, \chi}^{(0)}(Z)+G_{2r, \chi}^{(1)}(Z)+G_{2r, \chi}^{(2)}(Z)+G_{2r, \chi}^{(3)}(Z),
$$
where\index{$G_{2r, \chi}^{(j)}$} 
\begin{align}\label{equ:Gjsumdefn}
G_{2r, \chi}^{(j)}=\sum_{\gamma \in \Gamma_G^0(M), \gamma= \iota_j {n}(X) \mu } \chi(\gamma ) {\nu(\gamma)}^{-r} {j(\gamma, Z)}^{-2r},
\end{align}
with the sum only over the classes $\gamma$ represented in $P(\m Q)\setminus G(\m Q)$ by an element of the form $\iota_j n(X) \mu$ with $X$ in $H_3(B)$ and $\mu \in L(\m Q)$.   For ease of notation, we drop the subscript and write\index{$G^{(j)}$}
\begin{align*}
G^{(j)}:=G_{2r, \chi}^{(j)}
\end{align*}
for the remainder of the computations in this section.

Note that in the special case $j=0$, the sum in Equation \eqref{equ:Gjsumdefn} is over a single element, namely the identity matrix $\iota_0$.  We have
\begin{align}\label{equ:G0identically1}
G^{(0)}(Z):=\chi(\iota_0) {\nu(\iota_0)}^{-r} {j(\iota_0, Z)}^{-2r} = 1\times1\times1 = 1.
\end{align}
So $G^{(0)}$ is the constant function $1$.  To prove Theorem \ref{thm:Eseriescoeffs}, we are now reduced to determining the Fourier coefficients of $G^{(j)}$ for $j=1, 2, 3.$  Consequently, for the remainder of this section, we assume $j\neq 0$.

Passing back to the sum over rational matrices and using  the formula from \cite[Equation (2.1)]{KimExceptional} for $j \neq 0$
\[
j(\iota_j {n}(X) \mu , Z)=\pm N_{j}(\mu.Z+X)\kappa(X)
\] (and using the convention that $\chi (\kappa(X))$ is zero if $\kappa(X)$ is not coprime to $M$), we obtain
\begin{align}\label{equ:Gjdecomptemp}
G^{(j)}(Z)= & \sum_{X \in H_3(B), \mu  \in L(\m Q)} \chi(\kappa(X)) \kappa(X)^{-2r} {N_{j}(\mu.Z+X)}^{-2r}
\end{align}
for $j=1, 2, 3$.

Observe that each of the series $G^{(j)}$ is invariant by translation $Z \mapsto Z+Y $ for all $Y \in  H_3(B_0)$.  Thus, similarly to \cite[Equations (5) and (10) of Section 7]{Tsao}, $G^{(j)}(Z)$ has a Fourier expansion of the form
\begin{align}\label{expr:FexpnGj}
\sum_{h\in H_3(B_0)^\vee}a_h e^{2\pi i \tr(Z, h)},
\end{align}
where $H_3(B_0)^\vee$ denotes the $\mathbb{Z}$-dual of $H_{3}(B_0)$ and $a_h\in\IC$.

We want to perform further reductions. 
Let $\mathbb{H}_{j}(B_0)$\index{$\mathbb{H}_{j}(B_0)$} (resp. $\mathbb{H}_{j}(B)$) denote the group of elements of $H_3(B_0)$ (resp. $H_3(B)$) which have $0$'s outside the top left $j \times j$ block, and let $\mathbb{H}_{j}(B_0)^\vee$\index{$\mathbb{H}_{j}(B_0)^\vee$} denote the $\mathbb{Z}$-dual of $\mathbb{H}_{j}(B_0)$. 

\begin{lem}[Theorem 3.4.1 of \cite{Tsao}]\label{lem:Tsao341}
Each matrix $h \in H_3(B_0)^\vee$ is conjugate to an element of $\mathbb{H}_{j}(B_0)^\vee$ of maximal rank $j$, for $j$ the rank of $h$. Moreover, $h$ can be diagonalized by a unitary matrix $\mu$.
\end{lem}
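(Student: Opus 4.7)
The plan is to reduce this to the spectral theory of Hermitian matrices over a quaternion algebra, following the approach of Tsao in \cite{Tsao} to which the statement is attributed. The two assertions are really two instances of a single diagonalization procedure: one performed by a unitary matrix $\mu \in U_3(B_0)$ that acts on $H_3(B_0)^\vee$ by $h \mapsto \mu^* h \mu$, preserving the rank stratification through the invariants $N_1, N_2, N_3$ introduced in Section \ref{sec:hermconv}.

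First I would set up the unitary action and verify that it preserves $H_3(B_0)^\vee$ and the rank. Then, proceeding by induction on the rank $j$, I would find an element $v \in B_0^3$, primitive in an appropriate sense, so that the quaternionic inner product $v^* h v \in \mathbb{Z}$ is nonzero (this exists since $h \neq 0$ when $j \geq 1$, because $h$ has rank $j$). The first step is to produce a unitary $\mu_1 \in U_3(B_0)$ carrying $v$ to the standard basis vector $e_1$; this relies on the fact that $B_0$ is a maximal order ramified at infinity, which provides enough units to execute the rotation integrally. After this rotation, the $(1,1)$ entry of $\mu_1^* h \mu_1$ is nonzero. One then applies Jacobi-type unitary operations — analogues of the elementary operations $(a)_{i,j}$ from Section \ref{sec:decompositionGj}, but now with $a$ chosen so that the resulting matrix is unitary rather than merely upper-triangular — to clear the remainder of the first row and column, reducing the problem to a $2 \times 2$ Hermitian matrix in the bottom-right block of rank $j-1$. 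Induction finishes the argument and simultaneously yields both the embedding into $\mathbb{H}_j(B_0)^\vee$ and the diagonalization.

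The key step, and the main obstacle, is to ensure that each of the Jacobi rotations can be effected by an element of $U_3(B_0)$ rather than merely $U_3(B_{\mathbb{R}})$. Over $\mathbb{R}$, diagonalization of quaternionic Hermitian matrices is classical (one can even take $\mu$ inside the compact group $Sp(3)$ after extending scalars to $\mathbb{H}_{\mathbb{R}}$), so the real difficulty is purely arithmetic: refining the rotations so that they lie in the integral unitary group $U_3(B_0)$ while sending $h$ to an element of the dual lattice $\mathbb{H}_j(B_0)^\vee$. This is exactly where the hypothesis that $B_0$ is a maximal order enters decisively, since it guarantees the local factorizations needed to produce integral unitary rotations carrying primitive lattice vectors to standard basis vectors.

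Since Tsao's result already carries out these arithmetic refinements in the closely related setting of Jordan algebras of Hermitian matrices over the quaternions of type $C$ (\cite[\S 3]{Tsao}), in practice I would verify that the conventions for $H_3(B_0)$ and for the unitary action match those of \cite{Tsao}, and then invoke Theorem 3.4.1 of \cite{Tsao} directly rather than reproducing the induction. No further input is required for our applications, since we only need the existence of such a normal form in order to reduce the Fourier coefficient computation for $G^{(j)}$ in later sections to elements of $\mathbb{H}_j(B_0)^\vee$.
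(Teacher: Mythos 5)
Your proposal ultimately does exactly what the paper does: Lemma \ref{lem:Tsao341} is stated in the paper without proof, as a direct appeal to Theorem 3.4.1 of \cite{Tsao}, and your final paragraph likewise defers to that theorem after checking that the conventions for $H_3(B_0)$ and the unitary action match. The preliminary induction sketch is therefore not load-bearing (and its claim that every rotation can be taken in the integral unitary group $U_3(B_0)$, via transitivity on primitive vectors, is stronger than what the lemma asserts or needs, since $\mu$ is only required to be unitary), but since you invoke Tsao's result directly this does not affect correctness.
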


Then, proceeding similarly to \cite[\S 8]{Tsao} and \cite[\S 3]{KimExceptional}, 
we furthermore have that $G^{(j)}(Z)$ has a Fourier expansion indexed by matrix of rank $j$.
\begin{lem}[Section 8.8 of \cite{Tsao}]\label{lem:FcoeffGjzero}
The $h$-th Fourier coefficient $a_h$ of $G^{(j)}$ is $0$ whenever $h$ is not conjugate to an element of $\mathbb{H}_{j}(B_0)^\vee$ of maximal rank $j$.
\end{lem}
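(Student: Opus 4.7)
The plan is to combine the unitary-invariance of $G^{(j)}$ with the structural fact that $N_j$ depends only on the top-left $j \times j$ block of its argument.

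First, I would record that the sum defining $G^{(j)}$ in \eqref{equ:Gjdecomptemp} is invariant under the left action $Z \mapsto u Z u^*$ for unitary elements $u \in L \cap \Gamma^0_G(M)$: such $u$ permute the pairs $(X, \mu)$ appearing in the sum (via $X \mapsto u^{-*} X u^{-1}$, $\mu \mapsto \mu u^{-1}$), preserve $\kappa$ and hence $\chi(\kappa(X))$, and satisfy $N_j(u \cdot W) = N_j(W)$ after the corresponding reindexing.  Comparing Fourier expansions then gives $a_h = a_{u^* h u}$ for such $u$.  By Lemma \ref{lem:Tsao341}, an arbitrary $h \in H_3(B_0)^\vee$ can be brought by a unitary conjugation to a diagonal matrix $\diag(\tilde h_1, \tilde h_2, \tilde h_3)$ supported in the top-left $k \times k$ block, where $k = \mathrm{rank}(h)$.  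So it suffices to prove $a_h = 0$ for such diagonal $h$ whenever $k \neq j$.

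Next I would extract $a_h$ by integrating $G^{(j)}(X + iY)\, e^{-2\pi i \tr(X, h)}$ over $X$ in a fundamental domain for the lattice $H_3(B_0)$.  Interchanging sum and integral, the combination of the sum over representatives of $X' \bmod H_3(B_0)$ in \eqref{equ:Gjdecomptemp} with the integration over the fundamental domain unfolds, by the standard device, to a single integral of $N_j(\mu Z + X)^{-2r}$ over all of $H_3(B_0 \otimes \mathbb{R})$, summed over $\mu \in L(\mathbb{Q})$ and a discrete set of coset representatives on which $\kappa$ is constant (so that the factors $\chi(\kappa(X')) \kappa(X')^{-2r}$ pass outside the integral without issue).

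The decisive observation is then the following.  Inside the resulting integrand, $N_j(\mu Z + X)^{-2r}$ depends on $X$ only through the top-left $j \times j$ block of $\mu Z + X$.  Writing $H_3(B_0 \otimes \mathbb{R}) = \mathbb{H}_j(B_0 \otimes \mathbb{R}) \oplus V$ where $V$ is the orthogonal complement with respect to $\tr(\cdot, \cdot)$, integration of the character $e^{-2\pi i \tr(X, h)}$ over the $V$-component forces the $V$-projection of $h$ to vanish.  After our diagonalization, this handles the range $k > j$: if some $\tilde h_i \neq 0$ with $i > j$, then $h$ has a nonzero $V$-component, and $a_h = 0$.  For $k < j$, the remaining integral reduces to an Archimedean Fourier transform of $N_j(W)^{-2r}$ on $\mathbb{H}_j(B \otimes \mathbb{R})$; by Shimura's analysis of confluent hypergeometric functions on tube domains (see \cite{ShimConfluent}, and as will be recalled in Section \ref{sec:factoratinfty} below), this transform is supported on matrices of maximal rank $j$, so the contributions with $\mathrm{rank}(h) < j$ vanish.

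The main obstacle I anticipate is the bookkeeping in the unfolding step at higher level: the factors $\chi(\kappa(X'))\kappa(X')^{-2r}$ are only invariant under translation of $X'$ by a sublattice of $H_3(B_0)$ depending on the denominators of $X'$, so one must organize the $X'$-sum by $\kappa$-classes and unfold over the appropriate sublattice in each class.  Once this is set up carefully, the two structural observations above close the argument.
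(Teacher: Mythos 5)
Your proposal follows what is in substance the paper's (that is, Tsao's) route: the rank-$j$ cell sees $Z$ only through the top-left $j\times j$ block of $\mu.Z$, which confines the Fourier support of $G^{(j)}$ to frequencies of the form $\mu^*.h'$ with $h'\in\mathbb{H}_j(B_0)^\vee$, and the Archimedean lattice-sum computation (Tsao's Lemmas 8.4--8.5, Shimura's confluent hypergeometric functions) kills the $h'$ of rank $<j$ because of the standing hypothesis $2r>10$ --- a hypothesis you use implicitly but never flag, and which is exactly where low-weight singular coefficients would survive. However, two steps as written have genuine gaps. First, the reduction to diagonal $h$: you establish $a_h=a_{u^*hu}$ (up to a character value) only for unitary $u\in L\cap\Gamma_G^0(M)$, but the diagonalizing unitary furnished by Lemma \ref{lem:Tsao341} is merely a unitary matrix over $B$ and has no reason to lie in this congruence subgroup, so ``it suffices to treat diagonal $h$'' is not justified. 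The paper never needs such an invariance of $G^{(j)}$: the conjugation enters as a reindexing inside the sum over $\mu\in L(\mathbb{Q})$, via $e^{2\pi i\,\mathrm{tr}(\mu.Z,h')}=e^{2\pi i\,\mathrm{tr}(Z,\mu^*.h')}$ as in \eqref{equ:GjFexpnexpression}; this per-$\mu$ bookkeeping is precisely the source of the phrase ``conjugate to an element of $\mathbb{H}_j(B_0)^\vee$'' in the statement, and it handles arbitrary $h$ with no congruence restriction on the conjugator.

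Second, your unfolding step is internally inconsistent and conflates two variables. The summand $N_j(\mu.Z+X)^{-2r}$ is literally constant in the directions of $X$ complementary to the $j\times j$ block, so you cannot unfold ``to a single integral over all of $H_3(B_0\otimes_{\mathbb{Z}}\mathbb{R})$'': the translation sum only runs (in effect) over $\mathbb{H}_j(B_0)$, the unfolding may only be performed in those directions, and the complementary $V$-integration must remain over the compact torus $V/(V\cap H_3(B_0))$ --- that compactness is exactly what turns your character-orthogonality claim into a vanishing statement rather than a divergent (delta-type) integral. Moreover, the Fourier coefficient is extracted by integrating over $\mathrm{Re}(Z)$, not over the Bruhat translation variable $X$; expressed in $\mathrm{Re}(Z)$, the ``constant directions'' of the $\mu$-term are the $\mu$-conjugate of the complement of $\mathbb{H}_j$, not the complement itself, so the decomposition $H_3=\mathbb{H}_j\oplus V$ cannot be used uniformly in $\mu$. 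Both defects are repaired by the same device: change variables $W=\mu.Z$ for each $\mu$, expand the inner lattice sum in $W$ (support in $\mathbb{H}_j(B_0)^\vee$, rank $j$ by the Archimedean vanishing), and transport back via $\mathrm{tr}(\mu.Z,h')=\mathrm{tr}(Z,\mu^*.h')$, which is how the paper (following \eqref{equ:Gjdecomptemp} and \eqref{equ:GjFexpnexpression}) reaches the stated conclusion.
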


For a fixed $\mu$ and for $X \in {H}_{3}(B)/{H}_{3}(B_0)$, we will study in Section \ref{sec:factoratinfty} the hypergeometric series
\[
 \sum_{S \in H_3(B_0)} N_{j}\left(\mu.Z+S+X\right)^{-2r}.
\]

Once we have dealt with the Fourier expansion of this series, we want to reduce to  a sum over $X \in \mathbb{H}_{j}(B)/ \mathbb{H}_{j}(B_0)$ rather than a generic lattice in $\mathbb{H}_{j}(B\otimes \mathbb{R})$. We proceed as in \cite[\S 7.6]{Tsao} to reduce ourselves to the case where $h \in \mathbb{H}_{j}(B_0)^\vee$.
Indeed, for any $h \in H_3(B_0)^\vee$ we have $\mu$ and  $h' \in \mathbb{H}_{j}(B_0)^\vee$ such that $\mu^*.h=h'$ (note that the action $\mu^*.h$ is given by conjugation). Then the $h$-th Fourier coefficient of $G^{(j)}(Z)$ will be the $h'$-th Fourier coefficient of $G^{(j)}(\mu.Z)$, as we have $e^{2 \pi i \mr{tr}(\mu.Z,h)}=e^{2 \pi i \mr{tr}(Z,\mu^*.h)}$  (one needs to use the formula $\mr{tr}(\mu.x,y)=\mr{tr}(x, \mu^*.y)$ \cite[\S 3.4]{Tsao}) and we obtain

\begin{align}\index{$C_{\infty,h}^{(j)}$}
G^{(j)}(Z)= &\sum_{h \in {H}_{3}(B_0)^\vee} C_{\infty,h}^{(j)}  \sum_{\mu, X \in \mathbb{H}_{j}(B)/\mathbb{H}_{j}(B_0)} \chi(n(X))  \kappa(X)^{-2s}    e^{2 \pi i \mr{tr}(\mu.Z + \mu.X,h)}\nonumber\\
 = & \sum_{h \in {H}_{3}(B_0)^\vee} C^{(j)}_{\infty,h}\nonumber \\
  & \times \sum_{\mu} \left( \sum_{X \in \mathbb{H}_{j}(B)/\mathbb{H}_{j}(B_0) }\chi(\kappa(X))   \kappa(X)^{-2r}  e^{2 \pi i \mr{tr}(X,h)}  \right) e^{2 \pi i \mr{tr}(Z,\mu^*.h)}\label{equ:GjFexpnexpression}
\end{align}
and we will make $C_{\infty,h}^{(j)}$ explicit in Section \ref{sec:factoratinfty}.  
This type of sum over $X$ is the focus of \cite[\S 9,10]{Tsao} and \cite[\S 4]{KimExceptional} (which make heavy use of the explicit calculations of \cite{KarelFourier}) and can be dealt with prime by prime.

\subsubsection{Infinite part}\label{sec:factoratinfty}
We begin by making $C_{\infty,h}^{(j)}$ explicit for $j=1, 2, 3$.  Since $s=r$ and $2r >10$, we have, by  \cite[Lemma 8.5]{Tsao}  (in the notation of {\it loc. cit.} $\rho=2r$ and $\mathbb{N}=10$), that  for all $h \in \mathbb{H}_{j}(B_0)^\vee$, the term $ C_{\infty,h}^{(j)}$ vanishes if $\mr{rank}(h)\neq j$.
In low weight, some of these factors might not vanish (analogously to how in the setting of $\GL_2$, the factor $\frac{1}{Y}$ appears for the $\mr{GL}_2$-Eisenstein series of weight $2$).  One can study the singular Fourier coefficients similarly to the approach in \cite{KimExceptional}.  
In particular, we may assume that $h \in  \mathbb{H}_{j}(B_0)^\vee$ has maximal rank $j$.
So for a fixed $\mu \in L(\m Q)$ we need to study the series 
\[
 \sum_{S \in H_3(B_0)} N_{j}\left(\mu.Z+X+S\right)^{-2r}.
\]
Shimura studied this type of series in his work on confluent hypergeometric functions \cite{ShimConfluent}.

Using \cite[Lemma 8.4]{Tsao} we get
\begin{align*}
\prod_{\iota=0}^{j-1} \Gamma\left( 2r -2\iota \right) \sum_{S \in \mathbb{H}_{j}(B_0)} N_{j}\left(Z-{S}\right)^{-2r} = & \mr{Vol}(\mathbb{H}_{j}(B_0)^\vee)(2 \pi i)^{2rj}\pi^{-j(j-1)}\\
 & \times \sum_{h \in \mathbb{H}_{j}(B_0)^\vee} N_{j}(h)^{2r-2j-1}e^{2 \pi i \mr{tr}(Z,h)},
\end{align*}
where $\mr{Vol}(\mathbb{H}_{j}(B_0)^\vee)$ is the volume of $\mathbb{H}_{j}(B_0)^\vee$. (Note: In the notation in \cite{Tsao}, $n$ is our $j$; $n_0$ is $4$, the dimension of $B$; $\m N=(n-1)n_0+2=4j-2$.)
Setting\index{$\Gamma_j$} 
\[
\Gamma_j(\alpha) =  \prod_{\iota=0}^{j-1} (2\pi i)^{\alpha} \pi^{-2\iota} \Gamma\left(\alpha-2{\iota}\right),
\]
we have\index{$C_{\infty,h}^{(j)}$}
\begin{align}\label{equ:Cinftyjexpression}
C_{\infty,h}^{(j)} = \frac{N_{j}(h)^{2r-2j-1}\mr{Vol}(\mathbb{H}_{j}(B_0)^\vee)}{\Gamma_{j}(2r)}.
\end{align}

We need to make the volume factors a bit more explicit. This is done in \cite[\S 11]{Tsao}
\begin{lem}\label{lemma:volumefactor}
The volume factors $\mr{Vol}(\mu.\mathbb{H}_{j}(B_0)^\vee)$ are rational numbers for every $\mu$.
\end{lem}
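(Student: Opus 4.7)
The plan is to reduce the lemma to two elementary rationality facts: first, that the covolume of the base lattice $\mathbb{H}_j(B_0)$ is rational, and second, that the action of $\mu \in L(\mathbb{Q})$ on $\mathbb{H}_j(B_\IR)$ has a rational determinant.

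First I would identify the lattice structure of $\mathbb{H}_j(B_0)$ inside the real vector space $\mathbb{H}_j(B_\IR) = \mathbb{H}_j(B \otimes_\IQ \IR)$. An element of $\mathbb{H}_j(B_0)$ has $j$ diagonal entries in $\ZZ$ and $\binom{j}{2}$ off-diagonal entries in $B_0$, so as a $\ZZ$-lattice it decomposes as $\ZZ^j \oplus B_0^{\binom{j}{2}}$. With respect to the Euclidean measure on $\mathbb{H}_j(B_\IR)$ induced by $\tr$ and the basis $\{\mathbf 1, \mathbf i, \mathbf j, \mathbf k\}$ (the same measure used in the proof of Lemma \ref{lemma:implicitconstant}), the covolume of $B_0$ in $B \otimes_\IQ \IR$ is $\frac14 D_B$ by \cite[Lemma 7.6]{pollack1}, so
\begin{align*}
\mr{Vol}\bigl(\mathbb{H}_j(B_\IR)/\mathbb{H}_j(B_0)\bigr) \;=\; (4^{-1}D_B)^{\binom{j}{2}} \;\in\; \IQ.
\end{align*}
Passing to the $\ZZ$-dual with respect to the pairing $\tr(\cdot,\cdot)$ (which is $\IQ$-valued on $\mathbb{H}_j(B_0) \times \mathbb{H}_j(B_0)$), one has $\mr{Vol}(\mathbb{H}_j(B_0)^{\vee}) = \mr{Vol}(\mathbb{H}_j(B_0))^{-1}$, which is again rational.

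Next I would handle the twist by $\mu$. The Levi subgroup $L$ acts $\IQ$-linearly on $H_3(B)$ via $X \mapsto \mu^* X \mu$, and this preserves the subspace $\mathbb{H}_j(B)$ since $\mu$ is built from the elementary operators $(a)_{i,j}$. Under this $\IQ$-linear action, the transformation $\mu \colon \mathbb{H}_j(B_\IR) \to \mathbb{H}_j(B_\IR)$ has determinant $\det(\mu) \in \IQ^\times$. Since volumes transform by the absolute value of the determinant,
\begin{align*}
\mr{Vol}(\mu.\mathbb{H}_j(B_0)^{\vee}) \;=\; |\det(\mu)| \cdot \mr{Vol}(\mathbb{H}_j(B_0)^{\vee}) \;\in\; \IQ.
\end{align*}

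The only genuinely non-routine point is confirming the rationality of $\det(\mu)$, but this is automatic once one observes that $L$ is defined over $\IQ$ (the generators $(a)_{i,j}$ act on $\mathbb{H}_j(B)$ by $\IQ$-rational matrices in any $\IQ$-basis), so the main obstacle is really just bookkeeping: choosing an explicit $\IQ$-basis of $\mathbb{H}_j(B)$ and checking that the action of $(a)_{i,j}$ on $\mathbb{H}_j(B)$ remains inside $\mathbb{H}_j(B)$ for the relevant indices, so that restriction of $\mu$ to this subspace is well-defined and $\IQ$-linear. Both checks are straightforward from the explicit formula $(a)_{i,j}.X = (1 + a^*e_{j,i})X(1+ae_{i,j})$ for matrices supported in the top-left $j \times j$ block.
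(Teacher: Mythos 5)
Your base computation is the same as the paper's: it also identifies $\mathbb{H}_j(B_0)$ with $B_0^{\binom{j}{2}}\oplus\ZZ^j$, uses \cite[Lemma 7.6]{pollack1} to get covolume $\det(T)=4^{-1}D_B$ for each copy of $B_0$ (with the Euclidean measure of Lemma \ref{lemma:implicitconstant}), and obtains $\det(T)^3$, $\det(T)$, $1$ for $j=3,2,1$. Your dual-lattice step is fine in substance, with one small caveat: the identity $\mr{Vol}(L^{\vee})=\mr{Vol}(L)^{-1}$ holds for the measure induced by the pairing itself; with the Euclidean measure used here it acquires the determinant of the Gram matrix of $\mr{tr}(\cdot,\cdot)$ in the chosen $\ZZ$-basis, which is rational, so the conclusion is unaffected. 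The paper reaches the same conclusion by a cruder but more robust device: any lattice commensurable with $\mathbb{H}_j(B_0)$ has volume differing from the rational base volume by a rational index.

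The step that does not work as written is your treatment of $\mu$. The generators $(a)_{i,k}$ of $L$ do \emph{not} preserve $\mathbb{H}_j(B)$ when $j<3$: for instance $(a)_{1,3}.e_{11}=e_{11}+a\,e_{13}+a^{*}e_{31}+n(a)\,e_{33}$, which has entries outside the top-left block. So for a general $\mu\in L(\IQ)$ there is no well-defined ``restriction of $\mu$ to $\mathbb{H}_j(B_\IR)$'' whose determinant you can take; $\mu.\mathbb{H}_j(B_0)^{\vee}$ is then a lattice in a transported ($\IQ$-rational) subspace, and the determinant-scaling formula only covers $j=3$ (where $\mathbb{H}_3(B)=H_3(B)$ is $L$-stable and $\det\mu\in\IQ^{\times}$ is indeed automatic) or those $\mu$ lying in the block Levi. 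Your closing remark that one need only check stability ``for the relevant indices'' is exactly the point that needs an argument, not a routine verification: one must either justify, via the block reduction of Lemma \ref{lem:Tsao341} and Remark \ref{rmk:reducetoblock}, that only block-preserving $\mu$ actually enter the volume factors, or else replace the determinant step by the paper's commensurability/index argument, which is how the paper's own proof disposes of the twisted and dual lattices in one stroke.
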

\begin{proof}
We start for $j=3$. We calculate the volume using the measure $\textup{d} x$ already used in Lemma \ref{lemma:implicitconstant}, the one induced by the usual Euclidean measure on 
\[
B_0 \otimes_{\mathbb{Z}} \mathbb{R} \cong  \mathbb{R} \mb{1}\oplus \mathbb{R} \mb{i} \oplus \mathbb{R} \mb{j} \oplus \mathbb{R}\mb{k}. 
\]
This gives that the covolume of $B_0$ is (again by \cite[Lemma 7.6]{pollack1})  $\det(T)=4^{-1}D_B$, for $T$ the matrix fixed in the paper, associated with $B$.
Hence, $mr{Vol}(\mathbb{H}_{3}(B_0))=\det(T)^3$, as 
\[
\mathbb{H}_{3}(B_0)\cong B_0^3 \oplus \mathbb{Z}^3.
\]
Now, the volume of every other lattice in $\mathbb{H}_{3}(B_0)\otimes_{\mathbb{Z}} \mathbb{R}$, may it be $\mathbb{H}_{j}(B_0)^\vee$ or $(\mu.\mathbb{H}_{j}(B_0))^\vee$, will differ by the volume of $\mathbb{H}_{3}(B_0)$ by the index, which is rational number.

For $j=2$ we have \[
\mathbb{H}_{2}(B_0)\cong B_0 \oplus \mathbb{Z}^2
\]
and its volume is $\det(T)$, and similarly for every commensurable lattice. 

For $j=1$, we \[
\mathbb{H}_{1}(B_0)\cong \mathbb{Z}
\]
which has volume $1$.
\end{proof}

\subsubsection{Finite part}\label{sec:finitepart}
We now need to study the series 
\begin{align}\label{finitesum}
\sum_{X \in \mathbb{H}_{j}(B)/\mathbb{H}_{j}(B_0)} \chi(\kappa(X))   \kappa(X)^{-2r}  e^{2 \pi i \mr{tr}(X,h')} .
\end{align}
arising in Equation \eqref{equ:GjFexpnexpression}.
We only consider $2r>10$.  So we only need to consider matrices $h'$ of rank $j$.
Since all terms in the sum are multiplicative, we have that the sum \eqref{finitesum} decomposes as
\begin{align*}
\sum_{X \in \mathbb{H}_{j}(B)/\mathbb{H}_{j}(B_0)} \chi(\kappa(X))   \kappa(X)^{-2r}  e^{2 \pi i \mr{tr}(X,h')}=\prod_{\ell } S^{(j)}_\ell(h'),
\end{align*}
with
\index{$S_\ell^{(j)}$}
\begin{align}
S^{(j)}_\ell(h')= & \sum_{m=0}^{\infty}\left (\sum_{X  \in \mathbb{H}_{j}(B \otimes \m Q_\ell)/\mathbb{H}_{j}(B_0 \otimes \m Z_\ell), \kappa(X) = \ell^m} e^{2 \pi i \mr{tr}(X,h')}\right) \chi(\ell^{m}) \ell^{-2mr} \label{equ:Sljdefn}\\
 = & (1-\chi(\ell)\ell^{-2r}) \sum_{m=0}^{\infty} \left(\sum_{\kappa(X) \leq \ell^m} e^{2 \pi i \mr{tr}(X,h')}\right)  \chi(\ell)^{m} \ell^{-2mr},\label{equ:Sljwithinteriorsum}
\end{align}
where the second equality holds if $j \neq 0$.
We study the sums $S^{(j)}_\ell$ in extensive and intricate computations in Sections \ref{sec:localfactorrklessthan3}, \ref{rank2}, and \ref{sec:localfactorrk3}.  

\begin{rmk}\label{rmk:reducetoblock}
By \cite[Lemma 7.6]{Tsao}, to prove rationality of each of the Fourier coefficients in Expression \eqref{expr:FexpnGj}, it suffices to prove rationality of the Fourier coefficients $a_{h'}$ for the matrices $h'$ whose entries are $0$ everywhere outside the upper left $j\times j$ block.  So we work with $h'$ of this simpler form in Sections \ref{sec:localfactorrklessthan3} and \ref{rank2}.  
 \end{rmk}
So that we may prove Theorem \ref{thm:Eseriescoeffs} without further delay, we summarize the main results here: 
\begin{itemize}
\item[($j=0$)] 
\[S^{(0)}_\ell(0) =1;\]
\item[($j=1$)] 
\[
S^{(1)}_\ell(h') =  (1- \chi(\ell)\ell^{-2r})P_\ell(h',\chi(l)l^{-2r});
\]
\item[($j=2$)]
\[
S^{(2)}_\ell(h') = (1- \chi(\ell)\ell^{-2r})(1- \chi(\ell)\ell^{2-2r})P_\ell(h', \chi(\ell)\ell^{-2r});
\]
\item[($j=3$)] 
\[
S^{(3)}_\ell(h') = (1- \chi(\ell)\ell^{-2r})(1- \chi(\ell)\ell^{2-2r})(1- \chi(\ell)l^{4-2r})P_\ell(h',\chi(\ell)\ell^{-2r}),
\]
\end{itemize}
with the factors $P_\ell(h',\chi(\ell)\ell^{-2r})$\index{$P_\ell$} denoting polynomials that depend only on the elementary divisors of $h'$ and that, furthermore, satisfy $P_\ell(h',\chi(\ell)\ell^{-2r})=1$ whenever $\ell \nmid N(h')$.\\

\begin{proof}[Proof of Theorem \ref{thm:Eseriescoeffs}]
The proof of Theorem \ref{thm:Eseriescoeffs} follows immediately from the expressions of the Fourier expansion of each $G^{(j)}$ in Equations \eqref{equ:G0identically1} and \eqref{equ:GjFexpnexpression}, the expression of the infinite part in Equation \eqref{equ:Cinftyjexpression},
and the algebraicity of the components of the finite part in Section \ref{sec:finitepart}.  In particular, for $j\neq 0$, we have
\begin{align}\label{equ:Cjproduct}
C_{\infty,h}^{(j)}\prod_\ell S_\ell^{(j)} = L(j)^{-1} N_{j}(h)^{2r-2j-1}\mr{Vol}(\mathbb{H}_{j}(B_0)^\vee)  \prod_{\ell\mid N(h')}P_\ell(h', \chi(\ell)\ell^{-2r}),
\end{align}
where $L(j)$ equals
\begin{itemize}
\item[($j=3$)] 
\[\frac{L(\chi,2r)L(\chi,2r-2)L(\chi,2r-4)\Gamma(2r)\Gamma(2r-2)\Gamma(2r-4)}{{(2i)}^{6r}\pi^{(6r-6)}};\]
\item[($j=2$)] 
\[\frac{L(\chi,2r-2) L(\chi,2r-4)\Gamma(2r-2)\Gamma(2r-4)}{{(2i)}^{4r}\pi^{(4r-6)}};\]
\item[($j=1$)]
\[\frac{ L(\chi,2r-4)\Gamma(2r-4)}{{(2i)}^{2r}\pi^{(2r-4)}}.\]
\end{itemize} 
(We consider $\chi$ as a character modulo $M$ so that $L(\chi,s)$ has no Euler factors at primes dividing $M$, or equivalently the Euler factor is $1$ at primes dividing $M$.)  
Recall that the generalized Bernoulli numbers are algebraic numbers in $\m Q(\chi) $ and coincide (up to some Euler factors if $\chi$ is not primitive, as in \cite[Equation (3)]{davenport}) with the values at negative integers of the Dirichlet $L$-function $L(\chi,s)$. Using the functional equation for Dirichlet $L$-functions (as in, e.g., \cite[Chapter 9, Equation (8)]{davenport}), we then get  \[ 
\frac{\Gamma(2r-2\iota)L(\chi,2r-2\iota)}{(2\pi i)^{2r-2\iota}} \in \m Q(\chi)g(\chi),
\]
where we recall $g(\chi)$ denotes the Gauss sum.
By \ref{lemma:volumefactor} the volume factors are rational numbers.
Thus Theorem \ref{thm:Eseriescoeffs} holds.
\end{proof}
\begin{cor}\label{coro:Eseriescoeffs}
For $2r >10$, the Fourier coefficients of $E_{2r, \chi}$ lie in $\IQ(\chi, g(\chi))$, or more precisely, in $\bigcup_{j=0}^3\left(\IQ(\chi)g(\chi)^j\right)\subset\IQ(\chi, g(\chi))$.
\end{cor}
\begin{proof}
Indeed, note that we have the relation \[ E_{2r, \chi}={ D_B^r C_{\infty}(r,r) L(\chi,2r)L^{(D_{B})}(\chi,2r-2)L(\chi,2r-4)}G_{2r, \chi} .\] 
Recall that
$ C_{\infty}(r,r) = \pi^{-(6r+6)}\Gamma(2r)\Gamma(2r-2)\Gamma(2r-4)$, then as in the end of the proof of Theorem \ref{thm:Eseriescoeffs}, the quantity $C_{\infty}(r,r) L(\chi,2r)L^{(D_{B})}(\chi,2r-2)L(\chi,2r-4)$ is essentially given by three generalised Bernoulli numbers, which are algebraic. This allows us to conclude the proof.
\end{proof}
In the Sections \ref{sec:localfactorrklessthan3}, \ref{rank2}, and \ref{sec:localfactorrk3}, we explicitly compute the sums $S_\ell^{(j)}$ defined in Equation \eqref{equ:Sljdefn} and summarized immediately preceding the proof of Theorem \ref{thm:Eseriescoeffs}.  We closely follow \cite{KarelFourier}, which treats the case when the sum is over Hermitian matrices over an octonion algebra.  

We fix an integer $j$ and a matrix $h \in \mathbb{H}_j(B_0)$ of rank $j$.  We also fix a prime number $\ell$ and we write \index{$B_\ell$}$B_\ell=B_0 \otimes_{\m Z} \m Q_\ell$, \index{$B_{0,\ell}$}$B_{0,\ell}=B_0 \otimes_{\m Z} \m Z_\ell$, and \index{$\Lambda_j$}$\Lambda_j=\mathbb{H}_{j}(B_{0,\ell})$.  As above, let $X \in \mathbb{H}_{j}(B)/\mathbb{H}_{j}(B_0)$.
We have (by definition) that $\kappa(X) \leq \ell^m $ if and only if $\ell^m X, \ell^m X^{\#} \in \Lambda_j$, and $\ell^m N(X) \in \m Z_\ell$. We rewrite the interior sum from Equation \eqref{equ:Sljwithinteriorsum}, i.e.\ the sum indexed by the inequality $\kappa(X) \leq \ell^m$,  doing the change of variables $X\mapsto\ell^mX$ to obtain
\begin{align}\label{equ:sumrewrite}
\sum_{\kappa(X) \leq \ell^m} e^{2 \pi i \mr{tr}(X,h)} =\left\{ \begin{array}{cc} 
 \sum_{ X\in \Lambda_1 / \ell^m \Lambda_1} \omega_m ({\mr{tr}(X,h)}), & j=1\\
 \sum_{ X\in \Lambda_2 / \ell^m \Lambda_2, X^{\#}  \in \ell^m \Lambda_2 } \omega_m ({\mr{tr}(X,h)}),& j=2\\
  \sum_{ X\in \Lambda_3 / \ell^m \Lambda_3, X^{\#}  \in \ell^m \Lambda_3, \ell^{2m} \mid N(X)} \omega_m ({\mr{tr}(X,h)}),& j=3
\end{array} 
\right.
\end{align}
where $\omega_m (z) = e^{2 \pi i z /\ell^m}$,\index{$\omega_m$} with $z \in \m Z_\ell$, is the usual additive character (the $\ell^m$ in the denominator of $\omega_m$ comes from the change of variables $X \mapsto l^m X$).  

We define, for any $\ell$-adic integer $ \lambda$, 
\index{$[\lambda]_m$}
\begin{align*}
[\lambda]_m:=\ell^{\mr{inf}(m,v_\ell(\lambda))}.
\end{align*}
 Note that $[\lambda]_m$ is the number of $x \in \set{1,\ldots,\ell^m}$ such that $\lambda x \equiv 0 \bmod \ell^m$.  The terms $[\lambda]_m$ are closely related to the polynomials arising in the Fourier expansions of our Eisenstein series.
We have the following lemma. 
\begin{lem}[Lemmas 2.1, 2.3, and 2.4 in \cite{KarelFourier}]\label{lem1}
Let $a_1$ be an element of $B_{0,\ell}$ such that $n(a_1) \equiv 0 \bmod \ell^m$ and let $A\left(a_1\right)$ be the number of elements $a_2 \in B_{0,\ell}/\ell^mB_{0,\ell}$ such that $a_1^*a_2 \equiv n\left(a_2\right) \equiv 0 \bmod \ell^m$.  Define $S(\lambda,a_1) = \sum_{a_2}\omega_m(\lambda n\left(a_2\right))$, where the sum runs over the elements $a_2$ satisfying this relationship with $a_1$.  If $f$ denotes the minimum between the $\ell$-adic valuations of $a_1$ and $\ell^{-m}n\left(a_1\right)$, we have
\begin{align}
\sum_{ x \in B_{0,l}/l^m B_{0,\ell}} \omega_m(\lambda n(x))= & \ell^{2m}[\lambda]^2_m \label{lem1.1}\\
S(\lambda,a_1) = & \ell^{2m}[\lambda]^2_f \label{lem1.2}\\
A(a_1) = & \ell^{2m}\left( \sum_{j=0}^f \ell^j -  \sum_{j=0}^{f-1} \ell^{j-1} \right). \label{lem1.3}
\end{align}
\end{lem}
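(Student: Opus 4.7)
The plan is to follow Karel's strategy in \cite{KarelFourier}, exploiting that the reduced norm $n$ on $B_{0,\ell}$ is a non-degenerate quadratic form of rank four over $\Z_\ell$. All three identities will reduce, through character sum manipulations, to standard evaluations of $\ell$-adic Gauss sums of $n$ on certain sublattices of $B_{0,\ell}/\ell^m B_{0,\ell}$.

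For Equation \eqref{lem1.1}, I would write $\lambda = \ell^v u$ with $u \in \Z_\ell^\times$. The case $v \geq m$ is trivial, since $\omega_m(\lambda n(x))$ is identically $1$ and the sum equals $\ell^{4m} = \ell^{2m}[\lambda]_m^2$. For $v < m$, I would pick a $\Z_\ell$-basis of $B_{0,\ell}$ adapted to a Witt decomposition of $n$ (in the split case $\ell \nmid D_B$ the form becomes $x_1 x_4 - x_2 x_3$; the ramified case is treated with an anisotropic binary summand paired with a hyperbolic one), so that the character sum factors into two independent hyperbolic pieces that each evaluate to $\ell^{m+v}$ by the standard orthogonality relation. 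Multiplying gives $\ell^{2(m+v)} = \ell^{2m}[\lambda]_m^2$.

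For Equation \eqref{lem1.2}, I interpret ``this relationship'' as the single constraint $a_1^* a_2 \equiv 0 \bmod \ell^m$ (the alternative reading would force $S(\lambda, a_1) = A(a_1)$, contradicting the claimed $\lambda$-dependence). The crux is to analyze the sublattice $L(a_1) := \{a_2 \in B_{0,\ell} : a_1^* a_2 \in \ell^m B_{0,\ell}\}$. Using $a_1 a_1^* = n(a_1)$ together with the Smith normal form of left multiplication by $a_1^*$ on $B_{0,\ell}$, one sees that the elementary divisors of $B_{0,\ell}/L(a_1)$ are controlled by the pair $v_\ell(a_1)$ and $v_\ell(\ell^{-m}n(a_1))$ whose minimum is $f$; moreover the restriction of $n$ to $L(a_1)$ becomes, after a suitable change of lattice, a non-degenerate quadratic form of rank four rescaled by a factor capturing $f$. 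Substituting into $S(\lambda,a_1)$ and applying Equation \eqref{lem1.1} at the effective level $\ell^{m-f}$ then yields $S(\lambda, a_1) = \ell^{2m}[\lambda]_f^2$.

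For Equation \eqref{lem1.3}, I would invoke Fourier inversion in the $n(a_2)$ variable: the indicator of $n(a_2) \equiv 0 \bmod \ell^m$ equals $\ell^{-m}\sum_{\lambda \bmod \ell^m}\omega_m(\lambda n(a_2))$, so applying this inside the sum defining $S(\lambda,a_1)$ gives $A(a_1) = \ell^{-m}\sum_{\lambda \bmod \ell^m} S(\lambda, a_1) = \ell^m \sum_{\lambda \bmod \ell^m}[\lambda]_f^2$. Stratifying $\lambda$ by its $\ell$-adic valuation and using $[\lambda]_f = \ell^{\min(v_\ell(\lambda),f)}$ produces a telescoping geometric sum collapsing to the stated closed form. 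The main obstacle is Equation \eqref{lem1.2}: pinning down the elementary-divisor structure of $L(a_1)$, together with the rescaling behavior of $n$ restricted to $L(a_1)$, uniformly across the split and ramified cases at $\ell$, is the technical heart of the lemma; once that structural claim is in hand, the remaining Gauss sum evaluations and counting manipulations are routine.
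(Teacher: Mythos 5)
The paper offers no proof of this lemma at all: it is quoted directly from Lemmas 2.1, 2.3 and 2.4 of \cite{KarelFourier}, which Karel proves for the (everywhere split) octonionic norm form and which are transcribed here for the rank-four quaternionic norm. Your proposal is therefore necessarily a from-scratch argument, and for split $\ell$ it is essentially Karel's method and is sound: the hyperbolic factorization giving \eqref{lem1.1}; the reading of ``this relationship'' as the single condition $a_1^*a_2\equiv 0\bmod \ell^m$ (your reason for that reading — otherwise $S(\lambda,a_1)$ would equal $A(a_1)$ and carry no $\lambda$-dependence — is exactly right); the Smith-normal-form analysis of $L(a_1)=\{a_2: a_1^*a_2\in\ell^mB_{0,\ell}\}$ for \eqref{lem1.2}, which in the split case $B_{0,\ell}\cong M_2(\Z_\ell)$ one can check directly (writing $a_1$ as $u\,\mathrm{diag}(\ell^\alpha,\ell^\beta)\,v$, the restriction of $n$ to $L(a_1)$ is $\ell^{m-f}$ times a unimodular hyperbolic form, and the sum over $L(a_1)/\ell^mB_{0,\ell}$ collapses to an instance of \eqref{lem1.1} at level $\ell^{f}$ — not $\ell^{m-f}$ as you wrote; the rescaling factor is $\ell^{m-f}$ — yielding $\ell^{2m}[\lambda]_f^2$); and the Fourier-inversion deduction $A(a_1)=\ell^{-m}\sum_{\lambda\bmod\ell^m}S(\lambda,a_1)=\ell^m\sum_{\lambda\bmod\ell^m}[\lambda]_f^2$, which does telescope to \eqref{lem1.3} and is arguably cleaner than a direct count.

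The genuine problem is your treatment of $\ell\mid D_B$. The norm form of the division quaternion algebra over $\Q_\ell$ is anisotropic, so it admits no hyperbolic summand whatsoever; on the maximal order it is (unimodular anisotropic binary) $\perp\ \ell\cdot$(unimodular anisotropic binary), not ``an anisotropic binary summand paired with a hyperbolic one.'' No change of basis will repair this, because \eqref{lem1.1} itself fails at ramified primes: for $m=1$ and $\lambda$ a unit, writing $x=a+b\pi$ with $a,b\in\Z_{\ell^2}$ and $n(x)=N(a)-\ell N(b)$, one gets
\begin{align*}
\sum_{x\in B_{0,\ell}/\ell B_{0,\ell}}\omega_1(\lambda n(x))=\ell^2\sum_{a\in\F_{\ell^2}}e^{2\pi i\lambda N(a)/\ell}=\ell^2\bigl(1+(\ell+1)(-1)\bigr)=-\ell^3\neq \ell^2=\ell^{2m}[\lambda]_m^2 .
\end{align*}
So the identities of the lemma hold only for $\ell\nmid D_B$; this is the implicit scope, consistent with Karel's split setting and with the paper's removal of the Euler factors at primes dividing $D_B$ in the normalization of $E^*_{2r,\chi}$. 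You should either restrict your argument to split $\ell$ and say so, or acknowledge that the ramified primes require a separate treatment; as written, the claim to handle the ramified case by a hyperbolic-plus-anisotropic Witt decomposition is an error. Aside from this and the minor ``effective level'' slip, your route is viable, though the structural claim about the elementary divisors of $B_{0,\ell}/L(a_1)$ — which you rightly flag as the technical heart of \eqref{lem1.2} — is asserted rather than proved.
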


With these lemmas at hand, we are redy to calculate the Fourier coefficients. The method is what Karel calls {\it Siegel's Babylonian reduction process};  we are not sure where the adjective ``Babylonian'' comes from. (In \cite{LatticePackings}, for a theorem on the $LDU$ decomposition of matrices that Weil calls Babylonian, the author guesses that the adjective comes from an analogy with completing the square for quadratic equation.)
The key idea is to repeatedly use the algebraic identity 
\[
\sum_{m=0}^{\infty} \left(\sum_{X | \gamma(X)=m}  a(X) \right)\ell^{-m} = (1-\ell^{-1})\sum_{m=0}^{\infty} \left( \sum_{X | \gamma(X)\leq m} a(X) \right)\ell^{-m} 
\]
(here $X$ ranges over a certain set, and we have two functions $\gamma$ and $a$ defined over this set, with $\gamma$ taking values in the non-negative integers) used to pass from Equation \eqref{equ:Sljdefn} to Equation \eqref{equ:Sljwithinteriorsum}, continuing rewriting the sum in parentheses until we are reduced to a finite sum over $m$, which will give a polynomial in $\ell$.

\subsubsection{\texorpdfstring{Local factors for Eisenstein series: ranks $<2$}{Local factors for Eisenstein series: ranks <2}}\label{sec:localfactorrklessthan3}
We now derive the expressions for the factors $S_\ell^{(j)}$ that were stated in Section \ref{sec:finitepart}.  Because the computations for $j=2, 3$ are particularly extensive, we handle $j=2, 3$ separately in Sections \ref{rank2} and \ref{sec:localfactorrk3} after handling $j=0, 1$ in the present section.

\paragraph{\bf Rank zero} By Equation \eqref{equ:G0identically1}, $G^{(0)}=1$, so we already know that the Fourier coefficients of $G^{(0)}$ are algebraic, even without computing $S_\ell^{(0)}$.  For the sake of completeness, though, we note that the definition of $S_\ell^{(j)}$ in Equation \eqref{equ:Sljdefn} immediately gives $S^{(0)}_\ell(h) =1$, since $0$ is the only rank $0$ matrix.

\paragraph{\bf Rank one} We identify  $\Lambda_1$ with $\ZZ_\ell$, by identifying the diagonal matrix $[a, 0, 0]$ with the element $a\in \ZZ_\ell$.  Thanks to Remark \ref{rmk:reducetoblock}, it suffices to consider those matrices $h$ whose entries are $0$ outside of the upper left $1\times 1$ block.  So we are reduced to considering diagonal matrices $h=[n, 0, 0]$ with $n\neq 0$.  We write $n=\ell^a n'$, with $\gcd(n',\ell)=1$. Then we get
\begin{align*}
\sum_{\kappa(X) \leq \ell^m} e^{2 \pi i \mr{tr}(X,h)} &= \sum_{ X \in \m Z_\ell/ \ell^m \m Z_\ell } e^{2 \pi i X \ell^a /\ell^m}\\
&=\begin{cases}
\ell^m& \mbox{if } a\geq m\\
0 & \mbox{else}
\end{cases}
\end{align*}
So the expression for $S^{(1)}_\ell$ from Equation \eqref{equ:Sljwithinteriorsum} becomes
\begin{align*}
S^{(1)}_\ell(h) & = \left(1- \chi(\ell)\ell^{-2r}\right)\left(1+\chi(\ell)\ell^{1-2r}+ \chi(\ell^2)\ell^{2-2\cdot 2r}+ \cdots + \chi(\ell^{\mr{val}_\ell(n)})\ell^{\mr{val}_\ell(n)-2r \mr{val}_\ell(n)}\right)\\
&=\left(1- \chi(\ell)\ell^{-2r}\right)P_\ell(h,\chi(\ell)\ell^{-2r}),
\end{align*}
where $P_\ell(h,T)$ is the polynomial defined by
\begin{align*}
P_\ell(h,T):=\sum_{m=0}^{a}\ell^mT^m.
\end{align*}

\subsubsection{\texorpdfstring{Local factors for Eisenstein series: rank $2$}{Local factors for Eisenstein series: rank 2}}\label{rank2}
Thanks to Remark \ref{rmk:reducetoblock}, it suffices to consider those matrices $h$ whose entries are $0$ outside of the upper left $2\times 2$ block.  Furthermore, by Lemma \ref{lem:Tsao341}, such matrices can be diagonalized by conjugation by a unitary matrix.   Since the trace of a Hermitian matrix is invariant under conjugation by a unitary matrix (see Remark \ref{rmk:traceinvariance}) and conjugation by a unitary matrix simply permutes the space of Hermitian matrices, we are reduced to the case where $h$ is a diagonal matrix of the form $[d_1,d_2, 0]$ $d_1 \mid d_2 \neq 0$.  Thus, we are reduced to considering Equation \eqref{equ:sumrewrite} in the case of $2\times 2$ matrices, with $h$ the diagonal matrix $[d_1, d_2]$ and $X$ in Equation \eqref{equ:sumrewrite} being written as  $X= \left( \begin{array}{cc}
c_1 & a_3 \\
a_3^* & c_2
\end{array}
\right)$.
For the remainder of the rank $2$ computation, we work exclusively with $2\times 2$ matrices.  We have $\tr(X, h) = c_1d_1+c_2d_2$, and adapting our conventions to the setting of $2\times 2$ matrices, we have $X^{\#}=\left( \begin{array}{cc}
0& 0 \\
0 & N(X)
\end{array}
\right)$ with $N(X)=c_1c_2-n(a_3)$.  The sum in Equation \eqref{equ:sumrewrite} can be expressed then as a sum over $X$ with $N(X)\in \ell^m\ZZ_\ell$.

Since $\ell^m\divides N(X)$, we have $ \sum_{\lambda \bmod \ell^m \m Z_\ell} \omega_m(\lambda N(X))=\ell^m$.  So 
\begin{align}
\ell^m \sum_{\kappa(X) \leq \ell^m} e^{2 \pi i \mr{tr}(X,h)} = & \sum_{X= \left( \begin{array}{cc}
c_1 & a_3 \\
a_3^* & c_2
\end{array}
\right), \lambda \bmod \ell^m} \omega_m(c_1d_1+c_2d_2)\omega_m(\lambda N(X))\nonumber \\
 = & \sum_{c_2 \bmod \ell^m} \omega_m(c_2d_2) \sum_{c_1,\lambda \bmod \ell^m} \omega_m(c_1(d_1+\lambda c_2)) \sum_{a_3 \in B_\ell / \ell^m B_\ell} \omega(-\lambda n(a_3)).\label{equ:2of3}
\end{align} 

Applying Lemma \ref{lem1} and noting that the second sum in Equation \eqref{equ:2of3} contributes $\ell^m$ if $d_1+\lambda c_2 \equiv 0 \bmod \ell^m$ and $0$ otherwise, we get
\begin{align*}
\ell^m \sum_{\kappa(X) \leq \ell^m} e^{2 \pi i \mr{tr}(X,h)} = & \ell^{2m+m} \sum_{c_2 \bmod \ell^m} \omega_m(c_2d_2) \sum_{\lambda \mbox{ s.t. } d_1+\lambda c_2 \equiv 0 \bmod \ell^m} [\lambda]^2_m.
\end{align*} 
So we need to calculate
\begin{align*}
 \sum_{\kappa(X) \leq \ell^m} e^{2 \pi i \mr{tr}(X,h)} = & \ell^{2m} \sum_{c_2 \bmod \ell^m} \omega_m(c_2d_2) \sum_{\lambda \mbox{ s.t. } d_1+\lambda c_2 \equiv 0 \bmod \ell^m} [\lambda]^2_m.
\end{align*}

Suppose first that $d_2$ (hence $d_1$) is prime to $\ell$. Given the condition $\lambda c_2 \equiv -d_1 \bmod \ell^m$,  the sum is only over $c_2$ and $\lambda$ coprime to $\ell$, and it reduces to $\ell^{2m} \sum_{(c_2,\ell)=1} \omega_m(c_2d_2)$. We can rewrite it as
\begin{align*}
 \sum_{\kappa(X) \leq \ell^m} e^{2 \pi i \mr{tr}(X,h)} = & \ell^{2m} \sum_{c_2} \omega_m(c_2d_2)(\beta(c_2)-\beta(c_2/\ell)).
\end{align*} 
where $\beta(x)=1$ if $x$ is in $\m Z_\ell$ and $0$ otherwise.  Distributing and using that $\beta(c_2/\ell)=1$ only if $\gcd(c_2,\ell) \neq 1$, we get 
\begin{align*}
 \sum_{\kappa(X) \leq \ell^m} e^{2 \pi i \mr{tr}(X,h)} = & \ell^{2m} \left( \sum_{c_2 \bmod \ell^m} \omega_m(c_2d_2)\beta(c_2)-\sum_{c_2 \bmod \ell^{m-1}} \omega_{m-1}(c_2d_2)\beta(c_2) \right).
\end{align*} 
Hence,
\begin{align*}
\frac{S_\ell^{(1)}(h)}{(1-\chi(\ell)\ell^{-2r})} & = \sum_{m=0} \ell^{2m} \left( \sum_{c_2 \bmod \ell^m} \omega_m(c_2d_2)\beta(c_2)-\sum_{c_2 \bmod \ell^{m-1}} \omega_{m-1}(c_2d_2)\beta(c_2) \right) \chi\left(
\ell^m\right)\ell^{-2rm} \\
 & =\left(1- \chi(\ell)\ell^{2-2r}\right) \sum_{m=0}  \left(\sum_{(c_2,\ell) =1} \omega_m\left(c_2d_2\right) \right)\chi(\ell^m) \ell^{2m-2rm}    \\
 & = \left(1- \chi(\ell)\ell^{2-2r}\right),
\end{align*}
as the last sum over $c_2$ is the sum of all the primitive $\ell^m$ roots of unit which is $1$ if $m=1$ and $0$ otherwise. \\

On the other hand, if $\ell \mid d_2$, then the sum
$ H_{c_2}:=\sum_{\lambda} [\lambda]^2_m$ can be written as 
\[
H_{c_2}= \beta_{m}(h,c_2)-\beta_{m-1}(h,c_2/\ell)
\]
for $\beta_{m}(h,c_2)=[d_1]_m^2 \sum_{k=0}^{\mr{inf}(v_\ell(d_1),v_\ell(c_2))}\ell^{-k}$, under the convention that a sum from $0$ to a negative number is $0$ (to handle the cases where $c_2/\ell$ is not an integer).

So 
\begin{align*}
\frac{S_\ell^{(1)}(h)}{\left(1-\chi(\ell)\ell^{-2r}\right)} & = \sum_{m=0} \ell^{2m} \left( \sum_{c_2 \bmod \ell^m} \omega_m(c_2d_2)\beta(c_2)-\sum_{c_2 \bmod \ell^{m-1}} \omega_{m-1}(c_2d_2)\beta(c_2) \right) \chi(\ell^m)\ell^{-2rm}\\
 & =\left(1- \chi(\ell)l^{2-2r}\right) \sum_{m=0}  \left(\sum_{c_2} \omega_m(c_2d_2) \beta_{m}\left(h,c_2\right)\right)\chi(\ell^m)\ell^{-2rm}  \\
 & = \left(1- \chi(\ell)l^{2-2r}\right)\sum_{m=0} \alpha'_m(h)\chi(\ell^m)\ell^{m(2-2r)},
\end{align*}
where \index{$\alpha'_m$}
\begin{align}\label{equ:alphaprimem}
\alpha'_m(h):= \left(\sum_{c_2 \bmod \ell^m} \omega_m(c_2d_2) [d_1]_m^2 \sum_{k=0}^{\mr{inf}(m,v_\ell(d_1),v_\ell(c_2))}\ell^{-j}\right).
\end{align}

Let $v:=\mr{inf}(m,\mr{val}_\ell(d_1))$ and $v':=\mr{inf}(m,\mr{val}_\ell(d_2))$. Swapping the sums in Equation \eqref{equ:alphaprimem}, we get that if $v < m-v' $ the sum of roots of unity is zero, hence
\begin{align*}
\alpha'_m(h)= &\ell^{2v}  \sum_{k=m-v'}^{v}\ell^{m-k}\ell^{-k},\\
\ell^{-m}\alpha'_m(h)= &\sum_{j=0}^{v+v'-m}\ell^{2j}.
\end{align*}

Let $\tau$ and $\tau'$ be the $\ell$-adic valuations of $d_1$ and $d_2$, we want to show that unless $j \leq \tau$ or $j \leq  m \leq \tau + \tau' -j$ the sum is empty. If $j \leq v+v'-m$, then $m \leq \tau + \tau' -j$, but as $v,v' \leq m$ then $j\leq m$. Using $v \leq \tau$ and $v' \leq m$, we get also $j \leq \tau$. This implies immediately 
\begin{align*}
\sum_{m=0} \alpha'_m(h)\chi\left(\ell^m\right)\ell^{m(2-2r)}= \sum_{j=0}^{\tau}\ell^{2j}\sum_{m=j}^{\tau+\tau'-j}\chi\left(\ell^m\right)\ell^{m(3-2r)}
\end{align*} 
which is the polynomial \[
P_\ell(h,T)= \sum_{j=0}^{\tau}\ell^{2j}\sum_{m=j}^{\tau+\tau'-j}\ell^{3m}T^m
\]
evaluated at $\chi(\ell)\ell^{-2r}$.

\subsubsection{\texorpdfstring{Local factors for Eisenstein series: rank $3$}{Local factors for Eisenstein series: rank 3}}\label{sec:localfactorrk3}

Inspired by \cite[Section 7]{KarelFourier}, we write 
\begin{align}\label{equ:decompX}
X=c_3U + W+Q,\index{$c_3$}
\end{align}
 where 
\begin{align*}
 U = & \left(  
\begin{array}{ccc}
0 & 0 & 0 \\
0 &0 &0 \\
0& 0& 1
\end{array} 
 \right),\\
 W = &
 \left(  
\begin{array}{ccc}
0       & 0 & a_2^* \\
0& 0     &a_1 \\
a_2   & a_1^*     & 0
\end{array} 
 \right),\\
\index{$Q$}Q=& \left(  
\begin{array}{ccc}
c_1 & a_3 & 0 \\
a_3^* &c_2 & 0 \\
0& 0 & 0
\end{array} 
 \right).
\end{align*}
Following the conventions of Equation \eqref{equ:hhashdefn}, we have 
$$ X^{\#}= c_3'U+W'+Q', $$ where
\begin{align*}\index{$c_3'$}
c_3'  = & c_2c_1-n(a_3),\\
\index{$W'$} W' = &
 \left(  
\begin{array}{ccc}
0       & 0  & a_3a_1-c_2a_2^* \\
0 & 0     &a_3^*a_2^*-c_1a_1 \\
a_3^*a_1^*-c_2a_2  & a_3a_2 -c_1a_1^*      & 0
\end{array} 
 \right),\\
\index{$Q'$}Q'=& \left(  
\begin{array}{ccc}
c_2c_3-n(a_1) & a_2^*a_1^*-c_3a_3 & 0 \\
a_2a_1-c_3a_3^* &c_1c_3-n(a_2) & 0\\
0& 0 & 0
\end{array} 
 \right).
\end{align*}
Note that 
\[
W^{\#} = 
 \left(  
\begin{array}{ccc}
-n(a_1)       & a_2^*a_1^*  & 0 \\
a_1a_2 & -n(a_2)     & 0 \\
0  & 0     & 0
\end{array} 
 \right).
\]
Applying \cite[Equation (2.4)]{pollack1} to the decomposition of $X$ from Equation \eqref{equ:decompX}, we obtain
\begin{align}\label{equ:NofX24}
N(X)= c_3N_2(Q)+\mr{tr}(W^{\#},Q)
\end{align}
Observe that $X^{{\#}} \equiv 0 \bmod \ell^{m}$  if and only if the following three conditionshold:  
\begin{itemize}
\item[i)]$N_2(Q)=c_3' \equiv 0 \bmod \ell^m$;
\item[ii)] $Q'\equiv 0 \bmod \ell^m$;
\item[iii)] $ W' \equiv 0 \bmod \ell^m$;
\end{itemize}
By Equation \eqref{equ:NofX24}, we also have that $N(X) \equiv 0 \bmod \ell^{2m}$  if and only if 
\begin{itemize}
\item[iv)] $c_3N_2(Q)+\mr{tr}(W^{\#},Q) \equiv 0 \bmod \ell^{2m}$.
\end{itemize}
These four conditions are analogues in our setting of the similarly number conditions in \cite[Section 8]{KarelFourier}, and we adapt the approach in \cite[Section 8]{KarelFourier}, which concerns an exceptional group, to our setting.

Similarly to \cite[Equations (8.2)]{KarelFourier}, Equation \eqref{equ:sumrewrite} can now be expressed as 
\begin{align*}
\sum_{\kappa(X) \leq l^m} e^{2 \pi i \mr{tr}(X,h)} = \sum_{N_2(Q) \equiv 0 \bmod \ell^m} \omega_m({\mr{tr}(Q,h)}) \sum_{c_3,W}\omega_m ({\mr{tr}(c_3U+W,h)}),
\end{align*}
where the inner sum is over $c_3$ and $W$ meeting conditions ii), iii), and iv).
Similarly to the conventions of \cite[Equation (8.3)]{KarelFourier}, we denote this inner sum by $\beta_m(Q,h)$\index{$\beta_m$}.
Note that there are {\it a priori} $\ell^m$ different values for $c_3$ and ${\ell^m}^{(4+4)}$ different values for $W$. Also note that if $h$ is diagonal, then $\mr{tr}(W,h)=0$.
By similar reasoning to \cite[p.~ 192]{KarelFourier}, we are reduced to considering the case where $h$ is the diagonal matrix $[d_1,d_2,d_3]$ with $0 \neq d_3 \mid d_2 \mid d_1$.\index{$d_1, d_2, d_3$}  We rewrite $\beta_m$ as
$$ \beta_m(Q,h) = \ell^{-9m} \sum_{c_3 \bmod \ell^{2m}} \omega_m(c_3d_3)C(c_3)$$
  where $C(c_3)$\index{$C(c_3)$} is the number of matrices $W$ modulo $\ell^{2m}$ which satisfy the conditions above for a given $c_3$. The factor $\ell^{-9m}$ in front accounts for passing from  $W \bmod \ell^{m}$ and $c_3 \bmod \ell^m$ to $W\bmod \ell^{2m}$ and $c_3 \bmod \ell^{2m}$.

We rewrite $C(c_3)$ as 
$$ C(c_3)= \left(\ell^{-2m}\sum_{\lambda=1}^{\ell^{2m}} \omega_{2m} (\lambda(c_3N_2(Q)+\mr{tr}(W^{\#},Q)) \right)\left(\ell^{-6m}\sum_{H}\omega_{2m} ({\mr{tr}(H,Q')}) \right),$$
where the sum is matrices $H$ modulo $\ell^{2m}$ of the type  $ \left(  
\begin{array}{ccc}
h_1 & h_3 & 0 \\
h^*_3 & h_2 &0 \\
0&0 & 0
\end{array} 
 \right)$ which vanish modulo $\ell$. The first sum is to keep track of Condition {\it ii)}, and the second is for Condition {\it iv)}.

We write $Q'$ as 
\begin{align*}
Q'=W^{\#}+c_3 \tilde{Q},
\end{align*} for \index{$\tilde{Q}$}$\tilde{Q}= \left(  
\begin{array}{ccc}
c_2 & -a_3 & 0 \\
-a_3^* &c_1 & 0 \\
0& 0 & 0
\end{array} 
 \right) $.
 Then
 \begin{align*}
 \beta_m(Q,h) = & \ell^{-17m}\sum_{c_3, H, \lambda} \omega_m(c_3d_3)\omega_{2m}({\mr{tr}(H,W^{\#}+c_3  \tilde{Q})}\omega_{2m} (\lambda(c_3 N_2(Q)+\mr{tr}(W^{\#},Q))) \\
 = & \ell^{-17m}\sum_{c_3, H, \lambda} \omega_m(c_3(d_3+\mr{tr}(H,\tilde{Q})+\lambda N_2(Q)) \omega_{2m}({\mr{tr}(H,W^{\#})})\omega_{2m} (\lambda \mr{tr}(W^{\#},Q)) \\
 = & \ell^{-15m}\sum_{ H, \lambda}  \omega_{2m}({\mr{tr}(W^{\#},H+\lambda Q)}),
 \end{align*}
 with the sum over $c_3$ modulo $\ell^{2m}$ and $H$ and $\lambda$ such that \begin{align}\label{eq:condition}
 \ell^m d_3+\mr{tr}(H,\tilde{Q})+\lambda N_2(Q) = \ell^m d_3 +h_1c_2+h_2c_1+\lambda c_1c_2 \equiv 0 \bmod \ell^{2m},
 \end{align} while still requiring $W' \equiv 0 \bmod \ell^m$, due to Condition iii).

Let $\tau=\mr{inf}(m,\mr{val}_\ell(d_3))$\index{$\tau$} and \index{$\zeta$}$\zeta=\mr{inf}(\mr{val}_\ell(Q),m)$, where $\val_\ell$ of a matrix denotes the minimum valuation of its entries.  Let 
\begin{align}\label{equ:xidefn}\index{$\xi=\xi(m, h, Q)$}
\xi =\Xi(m, h, Q):= \mr{inf}(m,\mr{val}_\ell(N_2(Q))-m, \tau, \zeta).
\end{align}
 Diagonalizing $Q=\left(  
\begin{array}{ccc}
c_1 & 0 & 0 \\
0& c_2 &0 \\
0&0 & 0
\end{array} 
 \right)$ so that $c_1\divides c_2$, we have that $\zeta=\mr{val}_\ell(c_1)$.  Inspired by \cite[Sections 9 and 10]{KarelFourier}, we break our computation of $\beta_m(Q, h)$ into two cases: $\zeta>\tau$ and $\zeta\leq \tau$.

\begin{itemize}
\item[($\zeta > \tau$)] Congruence \eqref{eq:condition} tells us that we must have $ \xi \leq \tau= \mr{val}_\ell(d_3)$, since otherwise the sum is over an empty set. Hence $\mr{val}_\ell(c_2)=(m+\xi)-\zeta \geq \zeta$, or equivalently $m+\xi \geq 2 \zeta$. 
The quantity $\mr{tr}(H,\tilde{Q})$ is divisible by $\ell^{m+\zeta}$, and by Congruence \eqref{eq:condition}, $\ell^m d_3+ \lambda N_2(Q)$ must also be divisible by $\ell^{m+\zeta}$. But since we are currently in the case where $\tau < \zeta$, we have that $\ell^md_3$ is not congruent to $0$ modulo $\ell^{m+\zeta}$, and so $\mr{val}_\ell(h)+m+\xi = m+ \tau$ and $\mr{val}_\ell(\lambda)=\tau - \xi$. 
Diagonalizing $H$, we get $\mr{val}_\ell(h_1+\lambda c_1)=\mr{val}_\ell(\lambda c_1)$, because $\mr{val}_\ell(h_1) \geq m$ and $\mr{val}_\ell(\lambda c_1)= (\tau - \xi)+\zeta < 2 \zeta - \xi \leq m + \xi -\xi$. Similarly $\mr{val}_\ell(h_2 + \lambda c_2)=\mr{val}_\ell(\lambda c_1)$. Hence, we obtain
\begin{align*}
\mr{val}_\ell(N(H+\lambda Q))=\mr{val}_\ell(\lambda c_1c_2)=& 2\tau - 2\xi + \zeta + (m+\xi)-\zeta = 2\tau + m - \xi \\
\mr{val}_\ell(H+\lambda Q)=&\tau + \zeta - \xi.
\end{align*} 
We write $$L = \ell^{-(\tau + \zeta - \xi)} (H+\lambda Q) =  \left(  
\begin{array}{ccc}
\lambda_1 & \lambda_3 & 0 \\
\lambda^*_3 & \lambda_2 &0 \\
0&0 & 0
\end{array} 
 \right),$$
with $\lambda_1 \not\equiv 0 \bmod \ell$. 
We calculate
\begin{align*}
\mr{tr}(W^{\#},H+\lambda Q) & =-\ell^{\tau + \zeta - \xi}(\lambda_1 n(a_1) + \lambda_2 n(a_2)-\mr{tr}(\lambda_3a_2a_1)) \\
&= -\ell^{\tau + \zeta - \xi}(\lambda_1 n(a_2 - \lambda_1^{-1}a_2 \lambda_3) + \lambda_1^{-1}N(L)n(a_2)).
\end{align*}
Note that after diagonalizing $Q$, the condition $W' \equiv 0 \bmod \ell^m$ reduces to $c_2a_2 \equiv c_1a_1^* \equiv 0 \bmod \ell^m$, or equivalently 
$ a_1 \equiv 0 \bmod \ell^{m- \zeta} $ and $a_2 \equiv 0 \bmod \ell^{\zeta-\xi}$.
Using $\mr{val}_\ell(N(L))=m -2\zeta+\xi$ and Equation \eqref{lem1.1} of Lemma \ref{lem1}, we obtain
\begin{align}\label{alpha&alpha'}
 \beta_m(Q,h) = & \ell^{-15m}\sum_{H,\lambda} \sum_{a_1 \equiv 0 \bmod \ell^{m- \zeta}, a_2 \equiv 0 \bmod \ell^{\zeta-\xi}} \omega_{2m-(\tau + \zeta - \xi)}(N(L)n(a_2)+n(a_1)) \\
 = &   \ell^{-15m}\sum_{H,\lambda} \ell^{4m}[\ell^{m-\zeta+\tau}]^2_{2(m-\zeta+\xi)} \ell^{4m}[\ell^{\zeta+\tau-\xi}]^2_{2\zeta}.
 \end{align}

Note that $[\ell^{m-\zeta+\tau}]_{2(m-\zeta+\xi)}=\ell^{m-\zeta+\tau}$ if and only if $2(m-\zeta+\xi) \geq m-\zeta+\tau$ or equivalently $m \geq \zeta +\tau -2\xi$.  But we have already $m \geq  \zeta + (\tau -\xi) $.

Also note that $[\ell^{\zeta+\tau-\xi}]_{2\zeta}=\ell^{\zeta+\tau-\xi}$ because of $\zeta > \tau$, so 
\[
2 \zeta > \zeta+\tau \geq \zeta+\tau-\xi.
\]
 For each fixed $H$, we are summing $[N_2(Q)]_{2m}=\ell^{m+\xi}$ choices for $\lambda$, and we have $\ell^{(4+2)m}$ choices for $H$, hence
\begin{align*}
 \beta_m(Q,h) = & \ell^{-7m}\sum_{H,\lambda} \ell^{2(m-\zeta+\tau +\zeta +\tau -\xi) } \\
 =& \ell^{-7m}\sum_{H,\lambda} \ell^{2m +4\tau -2\xi } \\
 =& \ell^{-7m} \ell^{7m + \xi}\ell^{2m +4\tau -2\xi } \\
= & \ell^{2m-\xi} \ell^{4 \mr{val}_\ell(h)}. 
 \end{align*}
 
\item[($\zeta \leq \tau$)]
We suppose now that $\tau=\mr{inf}(m,\mr{val}_\ell(d_3)) \geq \zeta=\mr{inf}(m,\mr{val}_\ell(Q))$ and that $\mr{val}_\ell(Q)=\mr{val}_\ell(c_1)$.  Given a pair $(\lambda,H_0)$ satisfying Congruence \eqref{eq:condition}, we have that $(\lambda,H)$ also satisfies Congruence \eqref{eq:condition}, for all $H:=H_0+h''U_2$ with $ h''c_1 \equiv 0 \bmod \ell^{2m}$ and $U_2 =  \left(  
\begin{array}{ccc}
0 & 0 & 0 \\
0 &1&0 \\
0& 0&0
\end{array} 
 \right)$.
We write 
\begin{align}\label{eq:trace}
 \mr{tr}(W^{\#},H+\lambda Q)= h'' n(a_2) + \ldots 
\end{align} where the rest of the right hand side doesn't depend on $h''$. 
 Note that $\sum_{ h''}  \omega_{2m}(h'' n(a_2))$ doesn't vanish if and only if $n(a_2) \equiv 0 \bmod c_1=\ell^{\zeta}$.  Hence, it follows from Congruence \eqref{eq:condition} that 
\begin{align}\label{eq:condition'}
 - c_1 h_2  n(a_2) \equiv (\ell^m d_3 + h_1c_2 + \lambda c_1 c_2) n(a_2) \bmod \ell^{2m+\zeta}.
 \end{align}
  Once we fix $\lambda$ and $h_1 \equiv 0 \bmod \ell^m$, there are either $\ell^{\zeta}$ or $0$ possible choices for $h_2$, depending on whether $\lambda \equiv 0 \bmod \ell^{\zeta-\xi}$ or not (as $d_3 \equiv 0 \bmod \ell^{\zeta}$).  
 If we multiply each side of Congruence \eqref{eq:condition} by $c_1$, 
 and then substitute the term with $c_1h_2n(a_2)$ with the right hand side of Congruence \eqref{eq:condition'}, we get that  \begin{align*}
 c_1\mr{tr}(W^{\#},H+\lambda Q) & \equiv c_1(-n(a_1)(h_1+\lambda c_1) -n(a_2)(h_2+\lambda c_2)+\mr{tr}(h_3 a_1^* a_2))
 \\
 & \equiv  n(a_2 )\left(\ell^m d_3 + h_1c_2 + \lambda c_1 c_2\right) -c_1n(a_1)h_1 -\lambda c_1^2n(a_1)-\lambda c_2c_1 n(a_2)  +c_1 \mr{tr}(h_3 a_1^* a_2)
\\
& \equiv   \ell^m d_3n(a_2 ) +h_1(c_2 n(a_2) -c_1n(a_1))-\lambda c_1^2n(a_1)+c_1 \mr{tr}(h_3 a_1^* a_2)
\end{align*}  doesn't depend on $h_2$.
So we get 
\begin{align*}
\sum_{\lambda,H,W}  \omega_{2m+\zeta}({c_1\mr{tr}(W^{\#},H+\lambda Q)}) & = \ell^{7m+\xi}\sum_{a_1,a_2}  \omega_{m+\zeta}(d_3 n(a_2)),
\end{align*}
where the conditions on $a_1$ and $a_2$  are such that
\begin{itemize}
\item $c_1 a_1 \equiv c_2 a_2 \equiv 0 \bmod \ell^m$ (from before);
\item $c_1^2 n(a_1) \equiv 0 \bmod \ell^{2m+\zeta}$;
\item $a_1^* a_2 \equiv 0 \bmod \ell^m$;
\item $c_1n(a_2) \equiv c_2n(a_2) \bmod \ell^{m+\zeta}$.
\end{itemize}
The power $\ell^{7m+\xi}$ comes from summing over all admissible $H$ and $\lambda$ modulo $\ell^{2m}$  under the conditions:  $h_1 \equiv 0 \bmod \ell^m$ (so $\ell^m$ choices); $\lambda \equiv 0 \bmod \ell^{\zeta-\xi}$ (so $\ell^{2m-\zeta+\xi}) $;  $h_3 \equiv 0$ (so $\ell^{4m}$), and we have only $\ell^\zeta$ choices for $h_2$ as noted above. 

Before substituting this into the expression for $\beta_m(Q,h')$, we make a change of variables $a_2 \mapsto \ell^{-m}c_2 a_2$ and $a_1\mapsto \ell^{-m}c_2 a_1$ (and recall that $\mr{val}_\ell(c_2)=\xi+m-\zeta$). We get 
\begin{align*}
\beta_m(Q,h')=\ell^{-8m+\xi}\sum_{a_2 \bmod \ell^{2m+\xi -\zeta}} \omega_{m+2\xi-\zeta}(d_3 n(a_2)) A(a_2),
\end{align*}
where $A(a_2)$\index{$A(a_2)$} is the number of $a_1 $ modulo $\ell^{m+\zeta}$ for which $a_1^* a_2 \equiv n(a_1) \equiv 0 \bmod \ell^{\xi}$.
To calculate $A(a_2)$, let $f=f(a_2)$ be the minimum between the $l$-adic valuation of $a_2$ and $\ell^{-\xi}n(a_2)$. By Equation \eqref{lem1.3} of Lemma \ref{lem1} (and \cite[Formula (II)]{KarelFourier}) we get 
\[
A\left(a_2\right) = \ell^{4((m+\zeta)-\xi)+2\xi}\left( \sum_{j=0}^f \ell^j -  \sum_{j=1}^{f} \ell^{j-2} \right).
\]
So 
\begin{align*}
\ell^{4m+\xi-4\zeta}\beta_m(Q,h) &  = \sum_{j=0}^{\xi}\ell^{j} \sum_{f=j}^{\xi} \sigma_f -  \sum_{j=1}^{\xi}\ell^{j-2} \sum_{f=j}^{\xi} \sigma_f,\\
&=\sum_{j=0}^{\xi}\ell^{j} \sigma'_j -  \sum_{j=1}^{\xi}\ell^{j-2} \sigma'_j,
\end{align*}
for \begin{align*}
\sigma_f = & \sum_{a_2 \; s.t. \; f(a_2)=f}\omega_{m+2\xi-\zeta}(d_3 n(a_2)),\\
\sigma'_j= & \sum_{f=j}^{\xi} \sigma_f= \sum_{a_2 \; s.t. \; f(a_2)\geq j}\omega_{m+2\xi-\zeta}(d_3 n(a_2)).
\end{align*}
Note that each $a_2$ in the sum for $\sigma'_j$ is divisible by $\ell^j$, hence
\begin{align*}
\sigma_j'= &\ell^{4j}\sum_{a_2\bmod \ell^{2m+\xi -\zeta-2j}}\omega_{m+2\xi-\zeta-2j}(d_3 n(a_2))\\
 = & \ell^{5j-\xi}\sum_{\lambda=1}^{\ell^{\xi-j}}\sum_{a_2 \bmod \ell^{2m+\xi-\zeta-2j}}\omega_{m+2\xi-\zeta-2j}((d_3+\lambda \ell^{m+\xi-\zeta-j}) n(a_2)).
\end{align*}
 \underline{If $d_3 \equiv 0 \bmod \ell^{m+\xi-\zeta-j}$} then $d_3+\lambda \ell^{m+\xi-\zeta-j}$ and $\lambda \ell^{m+\xi-\zeta-j}$ range over the same set, hence we can change the order of $\omega$ (and multiply by $\ell^{4j}$ to reduce the range of $a_2$), and we obtain
\begin{align*}
\sigma_j'= & \ell^{5j-\xi} \sum_{\lambda=1}^{\ell^{\xi-j}}\sum_{a_2 \bmod \ell^{2m+\xi-\zeta-2j}}\omega_{\xi-j}(\lambda n(a_2))\\
 =& \ell^{5j-\xi+4(2m+\xi -\zeta-2j -(\xi -j))}\sum_{\lambda=1}^{\ell^{\xi-j}}\sum_{a_2 \bmod \ell^{\xi-j}}\omega_{\xi-j}(\lambda n(a_2))\\
 =& \ell^{8m-4\zeta-\xi+j+2(\xi-j)}\sum_{\lambda=1}^{\ell^{\xi-j}}[\lambda]^2_{\xi-j}\\
 =& \ell^{8m-4\zeta+\xi-j}\sum_{k=0}^{\xi-j}\ell^{2k} \sum_{\lambda, [\lambda]_{\xi-j}=\ell^k} 1\\
 =&  \ell^{8m-4\zeta+\xi-j}\left(\ell^{2\xi-2j} + \sum_{k=0}^{\xi-j-1}\ell^{2k} (\ell-1)\ell^{(\xi-j)-k-1} \right)\\
 =&  \ell^{8m-4\zeta+\xi-j}\left(\ell^{\xi-j}\sum_{k=0}^{\xi-j}\ell^k  - \ell^{\xi-j-1}\sum_{k=0}^{\xi-j-1}\ell^{k}\right)\\
 =& \ell^{8m-4\zeta+\xi-j}(\mathbb{b}_{\xi-j}-\mathbb{b}_{\xi-j-1}),
\end{align*}
for $\mathbb{b}_{i}=\ell^{i}\sum_{k=0}^i \ell^k $.

 \underline{If $d_3 \not\equiv 0 \bmod \ell^{m+\xi-\zeta-j}$}, we simply get 
\begin{align*}
\sigma_j' = &\ell^{5j-\xi}\sum_{\lambda=1}^{\ell^{\xi-j}} \ell^{4(2m+\xi-\zeta-2j -(m+2\xi-\zeta-2j))} \ell^{2(m+2\xi-\zeta-2j)}[d_3]^2_{m+2\xi-\zeta-2j}\\
= &\ell^{5j-\xi}{\ell^{\xi-j}} \ell^{4m-4\xi} \ell^{2m+4\xi-2\zeta-4j}[d_3]^2_{m+2\xi-\zeta-2j}\\
= & \ell^{6m-2\zeta}[d_3]^2_{m+2\xi-\zeta-2j}.
\end{align*}
We let $m'=m-\zeta+\xi$.\index{$\m'$}  Recall that $[h]_m=\ell^{\mr{inf}(\mr{val}_\ell(d_3)=\tau,m)}$.  So if $d_3 \not\equiv 0 \bmod \ell^{m+\xi-\zeta-j}$ then $[d_3]_{m'-j}=[d_3]_{m+2\xi-\zeta-2j}$ for all $j$. We can hence find the following common expression in the two cases:
\begin{align*}
\sigma_j'=\ell^{6m-2\zeta}\left([d_3]^2_{m'-j}\sum_{k=0}^{\xi -j} \ell^{k} -[d_3]^2_{m'-1-j}\sum_{k=0}^{\xi -j-1} \ell^{k+1} \right).
\end{align*}
We plug this formula into the formula for $\beta_m$ and obtain
\begin{align*}
\ell^{m+\xi-4\zeta-(6m-2\zeta)}\beta_m(Q,h)  =&\sum_{j=0}^{\xi}\ell^{j} \left([d_3]^2_{m'-j}\sum_{k=0}^{\xi -j} \ell^{k} -[d_3]^2_{m'-1-j}\sum_{k=0}^{\xi -j-1} \ell^{k+1} \right) +\\
& - \sum_{j=1}^{\xi}\ell^{j-2} \left([d_3]^2_{m'-j}\sum_{k=0}^{\xi -j} \ell^{k} -[d_3]^2_{m'-1-j}\sum_{k=0}^{\xi -j-1} \ell^{k+1} \right),\\
\ell^{\xi-2\zeta-2m}\beta_m(Q,h)  = & [d_3]^2_{m'}\sum_{k=0}^{\xi} \ell^{k}-[d_3]^2_{m'-1}\sum_{k=1}^{\xi} \ell^{k-1}
\end{align*}
Indeed, when \underline{$d_3 \equiv 0 \bmod \ell^{m+\xi-\zeta-j}$}, we get
\begin{align*} 
&\ell^{2m'}\left( \sum_{j=0}^{\xi}\ell^{-j} \sum_{k=0}^{\xi -j} \ell^{k} - \sum_{j=0}^{\xi}\ell^{-j}\sum_{k=0}^{\xi -j-1} \ell^{k-1} \right. +\\
& \left.- \sum_{j=1}^{\xi}\ell^{-j-2} \sum_{k=0}^{\xi -j} \ell^{k} +\sum_{j=1}^{\xi}\ell^{-j-2}\sum_{k=0}^{\xi -j-1} \ell^{k-1} \right)=\\
=& \ell^{2m'}\left( \sum_{j=0}^{\xi}\ell^{-j} \sum_{k=0}^{\xi -j} \ell^{k} - \sum_{j=1}^{\xi}\ell^{-j}\sum_{k=0}^{\xi -j} \ell^{k} \right. +\\
& \left.- \sum_{j=1}^{\xi}\ell^{-j-2} \sum_{k=0}^{\xi -j} \ell^{k} +\sum_{j=2}^{\xi}\ell^{-j-2}\sum_{k=0}^{\xi -j} \ell^{k} \right).
\end{align*} 
So only the terms with $j=0$ in the first line and $j=1$ in the second line remain.
And \underline{if $d_3 \not\equiv 0 \bmod \ell^{m+\xi-\zeta-j}$} we replace all $[d_3]_{m'-j}$ by $[d_3]_{m'}$.
We re-expresse $\beta_m$ as
\begin{align}
\beta_m(Q,h)= & \ell^{2m}\left( [d_3]^2_{m'}\ell^{2\zeta-\xi}\sum_{k=0}^{\xi} \ell^{k} - [d_3]^2_{m'-1}\ell^{2\zeta-\xi} \sum_{k=0}^{\xi -1} \ell^{k-1} \right)\notag\\
= &\ell^{2m}\left( [d_3]^2_{m'}\ell^{2\zeta-\xi}\sum_{k=0}^{\xi} \ell^{k} - [d_3]^2_{m'-1}\ell^{2(\zeta-1)-(\xi-1)} \sum_{k=0}^{\xi -1} \ell^{k}\right) \notag\\
= & \ell^{2m}\left(\beta'_m(Q,h) - \beta'_{m-1}(\ell^{-1}Q,h)\right), \label{eq:beta}
\end{align}
with $\beta'_m(Q,h)$ defined to be the first sum in the parentheses.  (Note that that $\zeta$ for $m-1$ and $\ell^{-1}Q$ is $\zeta$ for $m,Q$ minus $1$; i.e. $\zeta (m-1, \ell^{-1}Q)=\zeta(m,Q)-1$ (similarly $\Xi (m-1,\ell^{-1}Q,\tau,\zeta(m-1, \ell^{-1}Q))=\Xi(m,Q,\tau,\zeta)-1$).)
\end{itemize}

We now uniformize the two cases.  That is, we address how to think about Equation \eqref{eq:beta} in the case where $\tau < \zeta$.
Note that $\beta'_m(Q,h)$ can be written as
\[\beta'_m(Q,h)= [d_3,c_1]_m^{2}[d_3,c_2]_m^{2}\sum_{k=0}^{\xi} \ell^{-k}, \]
where $[h,c_i]_m=\ell^{\mr{inf}(\mr{val}_\ell(d_3),\mr{val}_\ell(c_i),m)}$.
Hence,  \[
\Xi(m-1,h,\ell^{-1}Q) =\left\{ 
\begin{array}{cc}
\Xi(m,h,Q) & \mbox{ if }\tau < \mr{inf}(m,\zeta-m)\\
\Xi(m,h,Q) -1 & \mbox{ if }\tau \geq \mr{inf}(m,\zeta-m)
\end{array}. 
\right.
\]
So if $\tau < \mr{inf}(m,\zeta-m)$ then $\beta'_m(Q,h) =\beta'_{m-1}\left(\ell^{-1}Q,h\right)$, and $\beta_m$ vanishes; otherwise $\zeta$ does not appear in $\beta_m'$ and we get
\[ 
\beta_m(Q,h)=\left\{ 
\begin{array}{cc}
0 & \mbox{ if }\tau < \mr{inf}(m,\zeta-m)\\
\ell^{2m-\xi} \ell^{4 \mr{val}_\ell(h)} & \mbox{ if }\tau \geq \mr{inf}(m,\zeta-m)
\end{array}. 
\right.
\]
We define
\begin{align*}
\alpha_m(h) := \sum_{ N_2(Q) \equiv 0 \bmod \ell^m} \omega_m({\mr{tr}(Q,h)}) \beta_m(Q,h).
\end{align*} 
Since $\beta'_{m-1}(l^{-1}Q,h)=0$ if $Q \not\equiv 0 \bmod \ell$, we get 
\begin{align*}
 \sum_{ N_2(Q) \equiv 0 \bmod \ell^m} \omega_m({\mr{tr}(Q,h)}) \beta'_{m-1}(\ell^{-1}Q,h)=  \sum_{ N_2(Q) \equiv 0 \bmod \ell^{m-1}} \omega_{m-1}({\mr{tr}(Q,h)}) \beta'_{m-1}(Q,h).
\end{align*} 
Consequently, if we define 
$$\alpha'_m(h) :=   \sum_{ N_2(Q) \equiv 0 \bmod \ell^m} \omega_m({\mr{tr}(Q,h)}) \beta'_{m}(Q,h),$$
we get 
$$\alpha_m(h)=\ell^{2m} \left( \alpha'_m(h)  - \alpha'_{m-1}(h)  \right).$$
Telescopic summation gives us
\begin{align}\label{eq:twofactors}
 S_\ell^{(3)}(h)= (1- \chi(\ell)\ell^{-2r})(1- \chi(\ell)\ell^{2-2r})\sum_{m=0}^{\infty}\alpha'_m(h)\chi(\ell^m)\ell^{(2-2r)m}. 
\end{align}
We are left to calculate the internal sum.  We will now explain how to decompose it as a difference of the form $\ell^{2m}\left(\alpha''_m -\alpha''_{m-1}\right)$. This will give a polynomial times the factor $(1- \chi(\ell)\ell^{4-2r})$.

We begin by studying $\alpha'_m(h)$ when $m \leq \tau=\mr{val}_\ell(h)$. We have 
\begin{align*}
\alpha'_m(h)=& \sum_Q \omega_m({\mr{tr}(Q,h)}) \ell^{2m-2\zeta+2\xi+2\zeta-\xi}\sum_{k=0}^{\xi}\ell^k \\
=& \sum_{\xi=0}^{m}\ell^{2m} \mb{c}_{\xi}\ell^{\xi}\sum_{k=0}^{\xi}\ell^k,
\end{align*}
where $\mb{c}_\xi$ is the number of matrices $Q$ with $\Xi(m,Q)=\xi$. For each such $Q$, this implies that $Q$ is divisible by $\ell^{\xi}$, so $\Xi(m,Q)=\xi$ if and only if $\Xi(m-\xi,\ell^{-\xi}Q)=0$, so $N_2(Q)\equiv 0 \bmod \ell^{m-\xi}$, $Q \not\equiv 0 \bmod \ell$ or $N_2(Q) \not\equiv 0 \bmod \ell^{m-\xi+1}$. So we can write $\mb{c}_{\xi}=\mb{c}'_{m-\xi}-\mb{c}'_{m-\xi-1}$, where $\mb{c}'_{m-\xi}$ is the number of matrices $Q \bmod \ell^{m-\xi}$ with $N_2(Q)\equiv 0 \bmod \ell^{m-\xi}$.
This allows us to write 
$$\alpha'_m(h)=\ell^{2m} \left( \alpha''_m(h)  - \alpha''_{m-1}(h)  \right),$$
where $\alpha''_m(h)=\sum_{\xi =0}^m \mb{c}'_{m-\xi}\ell^{\xi}\sum_{k=0}^{\xi}\ell^k $. We want to show that $\alpha''_m(h)$  is equal to the quantity (denoted by the same symbol) $\alpha_m'(h'')$ defined in the rank 2 setting in Equation \eqref{equ:alphaprimem} of Section \ref{rank2}, where is  $h''$ the $2\times 2$-diagonal matrix $[d_1,d_2]$.

If $m > \tau=\mr{val}_\ell(h)$, we write \index{$\Psi_{j, \xi}$}$\Psi_{j,\xi}(h)=\sum_{Q \bmod \ell^m} \omega_m({\mr{tr}(Q,h)})$, where the sum ranges over the matrices $Q$ for which $\Xi(m,h,Q)$ has value $\xi$ and $j=\mr{inf}(m,\mathrm{val}_l(h),\mathrm{val}_l(Q))$.  We also write \index{$\Phi_{j, \xi}$}$\Phi_{j,\xi}(h) =\sum_{Q \bmod \ell^m} \omega_m({\mr{tr}(Q,h)})$, where the sum ranges over the matrices $Q$ for which $\Xi(m,h,Q) \geq \xi$ and $\mr{inf}(m,\mathrm{val}_l(h),\mathrm{val}_l(Q)) \geq j$. It follows from the definitions that $\Phi_{\xi-1,\xi}(h) = \Phi_{\xi,\xi}(h)$ (more generally $\Phi_{j,\xi}=\Phi_{\xi,\xi}$ if $j < \xi$).

We obtain 
\begin{align*}
\alpha'_m(h)  = & \sum_{ j=0}^{\tau}\ell^{2j} \sum_{\xi=0}^j \left(\sum_{i=0}^{\xi}\ell^{-i}\right) [d_3]^2_{m+\xi-j} \Psi_{j,\xi}(h) \\
\Psi_{j,\xi}(h)=& \Phi_{j,\xi}(h) - \Phi_{j,\xi+1}(h) -\Phi_{j+1,\xi}(h) + \Phi_{j+1,\xi+1}(h).
\end{align*}

We set {\index{$B_{j,\xi}$}$B_{j,\xi}:=   \left(\sum_{i=0}^{\xi}\ell^{-i}\right) [d_3]^2_{m+\xi-j}$.

We then have
\begin{align}\label{equ:alphamhtemp}
\alpha'_m(h)  = & \sum_{ j=0}^{\tau}\ell^{2j} \left( \sum_{\xi=0}^j B_{j,\xi}\Phi_{j,\xi}\right) -  \sum_{ j=0}^{\tau}\ell^{2j} \left( \sum_{\xi=0}^{j} B_{j,\xi}\Phi_{j,\xi+1}\right)\\
& - \sum_{ j=0}^{\tau}\ell^{2j} \left( \sum_{\xi=0}^j B_{j,\xi}\Phi_{j+1,\xi}\right) 
 + \sum_{ j=0}^{\tau}\ell^{2j} \left( \sum_{\xi=0}^{j} B_{j,\xi}\Phi_{j+1,\xi+1}\right).\nonumber
\end{align} 
 Replacing $j$ with $j-1$ in the third and fourth sum, and $\xi$ with $\xi-1$ in the second and fourth sum, and using $B_{j,-1}=0$ we get 
\begin{align*}
\alpha'_m(h)  =&  \sum_{j=0}^{\tau}\ell^{2j} \sum_{\xi=0}^j \left( B_{j,\xi}-B_{j,\xi-1}\right)\Phi_{j,\xi}  + \\
& -  \sum_{j=0}^{\tau}\ell^{2j} B_{j,j}\Phi_{j,j+1} + \\
& + \sum_{ j=1}^{\tau+1}\ell^{2j-2} \sum_{\xi=0}^j \left( B_{j-1,\xi}-B_{j-1,\xi-1}\right)\Phi_{j,\xi}+ \\
& +\sum_{j=0}^{\tau} \ell^{2j}B_{j,j}\Phi_{j+1,j+1}.
\end{align*} 
We can cancel the second and fourth terms  using $\Phi_{\xi-1,\xi} = \Phi_{\xi,\xi}$ to get
\begin{align}
\alpha'_m(h)  =  \sum_{ j=0}^{\tau}\ell^{2j} \sum_{\xi=0}^j \left( B_{j,\xi}-B_{j,\xi-1}\right)\Phi_{j,\xi}  - \sum_{ j=1}^{\tau+1}\ell^{2j-2} \sum_{\xi=0}^j \left( B_{j-1,\xi}-B_{j-1,\xi-1}\right)\Phi_{j,\xi}.
\end{align}
Note that $\mr{inf}(m,\mathrm{val}_l(h),\mathrm{val}_l(Q)) \leq \tau=\mathrm{val}_l(h)$ so $\Phi_{\tau+1,\xi}=0$ as $\mr{inf}(m,\mathrm{val}_l(h),\mathrm{val}_l(Q)) $ cannot be larger than $\tau$. Hence all the terms with $j=\tau+1$ are $0$.
Replacing $j$ with $j-1$ in the third and fourth sum, and $\xi$ with $\xi-1$ in the second and fourth sum, we get 
\begin{align}\label{equ:Binalphamprimetemp}
\alpha'_m(h)  =  \sum_{ j=0}^{\tau}\ell^{2j} \sum_{\xi=0}^j \left( B_{j,\xi}-B_{j,\xi-1}\right)\Phi_{j,\xi}  - \sum_{ j=1}^{\tau}\ell^{2j-2} \sum_{\xi=0}^j \left( B_{j-1,\xi}-B_{j-1,\xi-1}\right)\Phi_{j,\xi}.
\end{align} 
Whenever $\xi \leq j-m+\tau$, we have $[d_3]_{m+\xi-j}=\ell^{m+\xi-j}$, and we have $[d_3]_{m+\xi-j}=\ell^{\tau}$ otherwise.  So 
\begin{itemize}
\item $B_{j,\xi} - B_{j,\xi-1} = \ell^{-2}(B_{j-1,\xi} - B_{j-1,\xi-1} )$ if $\xi <  j-m+\tau $;
 \item $B_{j,j-m+\tau-1}=\ell^{-2}B_{j-1,j-m+\tau-1}$ if $\xi =  j-m+\tau $; 
 \item and $B_{j,\xi} - B_{j,\xi-1} = \ell^{2\tau-\xi}$ if $\xi >  j-m+\tau $.
\end{itemize}
So we rewrite $\alpha'_m(h)$ as 
\begin{align}\label{eq:rk3alpha}
\alpha'_m(h)  = & \ell^{2\tau}\left( \sum_{ j=0}^{\tau}\ell^{2j}  \sum_{\xi=\mr{sup}(0,j-m+\tau)}^j \ell^{-\xi} \Phi_{j,\xi} - \sum_{ j=1}^{\tau}\ell^{2j-2}  \sum_{\xi=\mr{sup}(0,j-m+\tau)}^j \ell^{-\xi} \Phi_{j,\xi} \right.\\ \notag
& + \sum_{ j=0}^{\tau}\ell^{2j} \left(\sum_{i=0}^{\mr{sup}(0,j-m+\tau)-1}\ell^{-i}\right) \Phi_{j,\mr{sup}(0,j-m+\tau)} +\\ 
& -  \left. \sum_{ j=1}^{\tau}\ell^{2j-2}  \left(\sum_{i=0}^{\mr{sup}(0,j-m+\tau)-1} \ell^{-i}\right) \Phi_{j,\mr{sup}(0,j-m+\tau)}\right). \notag
\end{align} 

When $\xi <  j-m+\tau $ the rest cancels out:
\begin{align*}
& \ell^{2j}\left( B_{j,\xi}-B_{j,\xi-1}\right)\Phi_{j,\xi}  - \ell^{2j-2} \left( B_{j-1,\xi}-B_{j-1,\xi-1}\right)\Phi_{j,\xi}=\\
=&   \ell^{2j} \left( B_{j,\xi}-B_{j,\xi-1} -\ell^{-2} \left( B_{j-1,\xi}-B_{j-1,\xi-1}\right) \right)\Phi_{j,\xi} =0.
\end{align*}
For $\xi =  j-m+\tau $ we use 
\begin{align*}
& [d_3]_{m+\tau-m}=\ell^2[d_3]_{m+\tau-m-1}=[d_3]_{m+\tau-m+1}= [d_3]_{m+\tau-m},\\
&\ell^{2j}(B_{j,j-m+\tau} -B_{j,j-m+\tau-1}) -\ell^{2j-2}(B_{j-1,j-m+\tau} -B_{j-1,j-m+\tau-1})=  \\
&=\ell^{2\tau}\left( \sum_{i=0}^{j-m+\tau}\ell^{2j-i} -\sum_{i=0}^{j-m+\tau-1}\ell^{2j-i-2}  - \sum_{i=0}^{j-m+\tau}\ell^{2j-i-2} + \sum_{i=0}^{j-m+\tau-1}\ell^{2j-i-2}\right)\\
&=\ell^{2\tau} \left( \ell^{2j- (j-m+\tau)} + \ell^{2j}\left(\sum_{i=0}^{\mr{sup}(0,j-m+\tau)-1}\ell^{-i}\right) - \ell^{2j-2} \left(\sum_{i=0}^{\mr{sup}(0,j-m+\tau)-1}\ell^{-i}\right) - \ell^{2j-2- (j-m+\tau)}\right) .
\end{align*}

We calculate $\Phi_{j,\xi}$:
\begin{align*}
\Phi_{j,\xi}(h) = & \sum_{Q \bmod \ell^m} \omega_m({\mr{tr}(Q,h)}), \left(\mbox{sum over } Q \equiv 0 \bmod \ell^j, N_2(Q) \equiv 0 \bmod \ell^{\xi +m} \right)\\
= & \sum_{Q \bmod \ell^{m-j}} \omega_{m-j}({\mr{tr}(Q,h)}), \left(\mbox{sum over }  N_2(Q) \equiv 0 \bmod \ell^{\xi +m-2j} \right)\\
 = & \ell^{6(j-\xi)}\sum_{Q \bmod \ell^{\xi +m-2j}} \omega_{\xi +m-2j}({\mr{tr}(Q,\ell^{\xi-j}h)}), \left(\mbox{sum over }  N_2(Q) \equiv 0 \bmod \ell^{\xi +m-2j} \right).
\end{align*}
This sum is nonzero only when ${\mr{tr}(Q,\ell^{\xi-j}h)} \equiv 0 \bmod \ell^{\xi +m-2j}$. We find again that $\Phi_{j,\xi}(h)$ is $$\ell^{6(j-\xi)}\alpha'_{m-2j+\xi}(\ell^{\xi-j}h'')$$  where this $\alpha'_{m-2j+\xi}$ is the one of Section \ref{rank2} and $h''$ the $2 \times 2$-diagonal matrix $[d_1,d_2]$.
So we can write 
\begin{align*}
\Phi_{j,\xi}(h) = & \ell^{6(j-\xi)}l^{2(m-2j+\xi)}\left( \alpha''_{m-2j+\xi}(\ell^{\xi-j}h'') -  \alpha''_{m-1-2j+\xi}(\ell^{\xi-j}h'') \right)\\
= & \ell^{2m-4\xi+2j}\left( \alpha''_{m-2j+\xi}(\ell^{\xi-j}h'') -  \alpha''_{m-1-2j+\xi}(\ell^{\xi-j}h'') \right)
\end{align*}
We claim that 
\begin{align*}
\Phi_{j,\xi}(h) = & \ell^{2m}\left( \alpha'''_{m}(h) -  \alpha'''_{m-1}(h) \right),
\end{align*}
where
\begin{align*}
 \alpha'''_{m}(h) = & \ell^{2\tau} \left( \sum_{j=0}^{\tau}\ell^{4j}\sum_{\xi=\mr{sup}(0,j-m+\tau)}^j \ell^{-5\xi}\alpha''_{m-2j+\xi}(\ell^{\xi-j}h'') + \right. \\
 & \left.- \sum_{j=1}^{\tau}\ell^{4j-2}\sum_{\xi=\mr{sup}(0,j-m+\tau)}^{j-1} \ell^{-5\xi}\alpha''_{m-2j+\xi}(\ell^{\xi-j}h'') \right).
\end{align*}
We recognize the first line of Equation \eqref{eq:rk3alpha} in $\alpha'''_{m}(h)$. Note that if $m \geq 2\tau$, then  $m-2j+\xi <0$ unless $m=2\tau$ and $j=\tau$, but then $\alpha''_{2\tau-2\tau-1}=0$.
So the rest of Equation \eqref{eq:rk3alpha} is 
\begin{align*}
R= & l^{4\tau} \left( \sum_{j=0}^{\tau}\ell^{4j}\sum_{\xi=\mr{sup}(0,j-m+\tau)}^j \ell^{-5\xi}\alpha''_{m-1-2j+\xi}(\ell^{\xi-j}h'') + \right. \\
 & \left.- \sum_{j=1}^{\tau}\ell^{4j-2}\sum_{\xi=\mr{sup}(0,j-m+\tau)}^{j-1} \ell^{-5\xi}\alpha''_{m-2j+\xi-1}(\ell^{\xi-j}h'') \right.\\
 & \left. (l^2-1)\sum_{j=m-\tau-1}^{\tau}\ell^{4m-4\tau-2} \sum_{i=0}^{\mr{sup}(0,j-m+\tau)-1}\ell^{-i}(\alpha''_{\tau-j}(\ell^{\tau-m}h'') -\alpha''_{\tau-j-1}(\ell^{\tau-m}h'') )\right).
\end{align*}
If $m\geq 2\tau$ the third line vanishes as the sum is empty, so we get $\alpha'''_{m-1}(h)$.
If $\tau < m < 2\tau$, then $\mr{sup}(0,j-m+\tau)=\mr{sup}(0,j-m+1+\tau)=0$ when $j < m- \tau$,  and $\mr{sup}(0,j-m+\tau)=\mr{sup}(0,j-m+1+\tau)-1$ otherwise. So the difference is 
\begin{align*}
R- \alpha'''_{m-1}(h)=&  \ell^{4m-2\tau-2}(\ell^2-1)\left(\sum_{j=m-\tau-1}^{\tau} \alpha''_{\tau-j}(\ell^{\tau-m}h'')\sum_{i=0}^{\mr{sup}(0,j-m+\tau)-1}\ell^{-i} \right. + \\
& -\sum_{j=m-\tau-1}^{\tau} \alpha''_{\tau-j-1}(\ell^{\tau-m}h'')\sum_{i=0}^{\mr{sup}(0,j-m+\tau)-1} \ell^{-i}+ \\
& -\left. \sum_{j=m-\tau}^{\tau} \ell^{-(j-m+\tau)}\alpha''_{\tau-j-1}(\ell^{\tau-m}h)  \right).
\end{align*}
Note that in the first and second sum the term $j=m-\tau-1$ or $j=m-\tau$ are $0$ as the inner sum is empty. 
The last two lines summed (with a change of variables $j+1 \mapsto j $, using that $ \alpha''_{-1}(\ell^{\tau-m}h'')=0$) give 
\[
\sum_{j=m-\tau-1}^{\tau} \alpha''_{\tau-j}(\ell^{\tau-m}h'')\left( \sum_{i=0}^{\mr{sup}(0,j-1-m+\tau)-1} \ell^{-i} + \ell^{-(j-1-m-\tau)} \right)
\]

 which is the opposite of the first line.  So we conclude that 
$$ S_\ell^{(3)}(h)= (1- \chi(\ell)\ell^{-2r})(1- \chi(\ell)\ell^{2-2r})(1- \chi(\ell)\ell^{4-2r})\sum_{m=0}^{\infty}\alpha'''_m(h)\chi(\ell^m)\ell^{(4-2r)m}. $$
To finish, we need to show that $\alpha'''_m(h)$ vanishes for $m$ large enough. 
We first observe that $\alpha''_{n}(h'') = 0 $ unless $d_1d_2 \equiv 0 \bmod \ell^n$. But  ${m-2j+\xi} \geq m-j \geq m-\tau$. So for $m > \tau$ all the terms of $\alpha'''_m$ vanish, hence we obtain our claim.    

\subsection{Restriction from $G$ to $\GSp_6$}\label{sec:restrictionfromG}
To prove our main results about algebraicity of values of Spin $L$-functions, we will need to understand the form of the Fourier coefficients of the restriction to $\GSp_6$ of our Eisenstein series.  Proposition \ref{prop:restrictionfromG} describes a helpful relationship between the Fourier coefficients of a modular form on $G$ and the Fourier coefficients of its restriction to $\GSp_6$.

\begin{prop}\label{prop:restrictionfromG}
Let $F$ be a modular form on $G$ of level $\Gamma^0(M)$, and let $f$ be the restriction of $F$ to $\GSp_6\subset G$.  Then each Fourier coefficient of $f$ is a finite $\ZZ$-linear combination of Fourier coefficients of $F$.
\end{prop}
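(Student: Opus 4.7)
The plan is to pass from the Fourier expansion of $F$ on $\CH$ to one of $f$ on $\CH_3$ by computing directly how the pairing $\tr(Z,h)$ behaves under the central embedding $\CH_3 \hookrightarrow \CH$, and then to control the resulting sum via a positivity argument.

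\textbf{Step 1 (Fourier expansion of $F$).} For each $X \in H_3(B_0)$ the element $n(X)$ lies in $G(\ZZ) \cap \Gamma^0(M)$, acts on $\CH$ by $Z \mapsto Z + X$, and has $j(n(X),Z) = 1$, so $F$ is invariant under translation by the lattice $H_3(B_0)$. Combined with holomorphy and the moderate growth condition built into Definition \ref{def:mformsonH}, this yields a Fourier expansion
\[
  F(Z) = \sum_{h \in H_3(B_0)^{\vee}} a_F(h)\, e^{2\pi i \tr(Z,h)}.
\]
By the Koecher principle for holomorphic functions of moderate growth on a tube domain of rank $\geq 2$, applied using self-duality of the cone of positive semi-definite elements of $H_3(B_\IR)$ under $\tr(\cdot,\cdot)$, the coefficient $a_F(h)$ vanishes unless $h$ is positive semi-definite.

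\textbf{Step 2 (Restriction).} For $z \in \CH_3 \subset \CH$, every entry of $z$ lies in the center of $B_\IC$, on which the quaternion involution acts trivially. A short manipulation of $\tr(x,y) = \tfrac12\tr(xy+yx)$ exploiting this centrality gives
\[
  \tr(z,h) = \sum_i z_{ii}\, h_{ii} + \sum_{i<k} z_{ik}\,(h_{ik} + h_{ik}^{*}) = \tr\bigl(T(h)\, z\bigr),
\]
where $T(h)$ is the symmetric $3 \times 3$ matrix with $T(h)_{ii} = h_{ii}$ and $T(h)_{ik} = T(h)_{ki} = \tfrac12(h_{ik} + h_{ik}^{*})$ for $i < k$. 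Pairing $h \in H_3(B_0)^{\vee}$ against the $\ZZ$-basis of $H_3(B_0)$ consisting of diagonal elementary matrices and of $\nu E_{ij} + \nu^{*} E_{ji}$ with $\nu$ in a $\ZZ$-basis of $B_0$ shows that $h_{ii} \in \ZZ$ and $h_{ik} + h_{ik}^{*} \in \ZZ$, so $T(h)$ is half-integral symmetric. Substituting into the expansion of $F$ and grouping by $T$ gives
\[
  a_f(T) = \sum_{h \in H_3(B_0)^{\vee},\, T(h) = T} a_F(h).
\]

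\textbf{Step 3 (Finiteness and main obstacle).} By Step 1, only positive semi-definite $h$ contribute. Non-negativity of the $2 \times 2$ principal minors yields $n(h_{ik}) \leq h_{ii}\, h_{kk} = T_{ii}\, T_{kk}$, while the condition $T(h) = T$ fixes $h_{ii}$ and the reduced trace $h_{ik} + h_{ik}^{*} = 2 T_{ik}$. Since $B$ is ramified at $\infty$, the reduced norm $n$ is a positive-definite quaternary quadratic form on $B_\IR$, so on any fixed reduced-trace affine hyperplane the sublevel set $\{a : n(a) \leq C\}$ is compact; its intersection with the discrete set of admissible $h_{ik}$ is therefore finite. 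This gives the claimed finite $\ZZ$-linear combination, with all coefficients equal to $1$. The chief obstacle is Step 1: since $G$ admits no known moduli interpretation, one cannot deduce Koecher from PEL Shimura variety machinery and must instead appeal to the general theory of tube domains of rank $\geq 2$ (e.g., via Pyatetski-Shapiro's Laplace transform methods). With that in hand, Steps 2 and 3 are essentially routine bookkeeping with Fourier series.
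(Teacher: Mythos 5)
Your proposal is correct and follows essentially the same route as the paper: restricting $e^{2\pi i\tr(Z,h)}$ to $\CH_3$ produces the exponential attached to $\tfrac12(h+{}^t h)$, which fixes the diagonal of $h$, and then positive semi-definiteness of the support forces the $2\times 2$ minors to bound the norms of the off-diagonal entries, so discreteness plus positive-definiteness of the norm form (as $B$ is ramified at $\infty$) gives finiteness with all coefficients equal to $1$. The only cosmetic difference is that you derive the Fourier expansion and its positive semi-definite support from translation invariance under $n(H_3(B_0))$ together with moderate growth (a Koecher-type argument), where the paper simply cites \cite[\S 4]{KarelFourier} and allows denominators dividing $M$; both suffice.
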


\begin{proof}
To prove the proposition, it suffices to prove that for any given $t \in M_3(\IQ)$, finitely many of the terms $\exp(2\pi i \mathrm{tr}(Z,h))$ for $h \in H_3(B)$ in the Fourier expansion of $F$ restrict to $\exp(2\pi i \mathrm{tr}(zt))$ arising in the Fourier expansion of the restriction of $f$.  We first show that when we restrict the domain of $\exp(2\pi i \mathrm{tr}(Z,h))$ from $Z \in \mathcal{H}$ to $z \in \mathcal{H}_3 \subseteq \mathcal{H}$, we obtain the Fourier coefficient corresponding to $\frac{1}{2}(h+{}^th) \in M_3(\IQ)$.  To see this, write
\[h = \left(\begin{array}{ccc} c_1 & a_{12} & a_{13} \\ a_{12} & c_2 & a_{23}\\ a_{13} & a_{23} & c_3 \end{array}\right)\text{ and }z = \left(\begin{array}{ccc} z_1 & z_{12} & z_{13} \\ z_{12} & z_2 & z_{23}\\ z_{13} & z_{23} & z_3 \end{array}\right).\]
Recall that $\mathrm{tr}(z,h) = \frac{1}{2}(zh+hz)$.  Using the fact that $z$ is stable under $B$-conjugation in $B_\IC$, we compute that the first diagonal entry of $zh+hz$ is
\[\frac{1}{2}\left(c_1z_1 + a_{12}z_{12} + a_{13}z_{13}+z_1c_1+z_{12}\overline{a}_{12}+z_{13}\overline{a}_{13}\right),\]
which matches the first diagonal entry of $\frac{1}{2}(h+{}^th)z$.  The same is true for the remaining diagonals.

Next observe that $\frac{1}{2}(h+{}^th)$ has the same diagonals as $h$, so for a given $t \in M_3(\IQ)$, it suffices to show that finitely many $h \in H_3(B)$ with the same diagonals as $t$ can contribute to a Fourier expansion of an automorphic form on $G$.  We now apply the fact that if $\exp(2\pi i \mathrm{tr}(Z,h))$ appears with a nonzero coefficient in such a Fourier expansion, then $h$ must belong to the positive semi-definite cone.  (For a reference for this, see \cite[\S 4]{KarelFourier}, which notes that $h$ must belong to the positive cone of squares in $H_3(B)$ as a Jordan algebra.  By the spectral theorem for Hermitian forms over quaternion algebras \cite{fp}, which says that such a form $X$ can be written as $U^*DU$ for $U^*U=\mathrm{Id}$ and $D$ real diagonal, this is just the cone of positive semi-definite forms.)

The property of being positive semi-definite is stable upon restriction to a subspace of the Hermitian space.  It follows that the three principal $2 \times 2$ minors of $h \in H_3(B)$ must also be positive semi-definite, and in particular that their norms are nonnegative.  Since the diagonals of $h$ are fixed, these nonnegativity conditions bound the norms of the off-diagonal entries of $h$.  Finally, since we are assuming that the Fourier coefficients belong to the Fourier expansion of a form of level $\Gamma^0(M)$, the entries of $h$ have denominators dividing $M$.  There are finitely many elements of $B$ with bounded norm and denominator bounded by $M$.
\end{proof}
 
As an immediate consequence of Proposition \ref{prop:restrictionfromG} and its proof, we obtain Corollary \ref{cor:restrictioncor}.
\begin{cor}\label{cor:restrictioncor}
Let $F$ be a modular form on $G$.  If $F$ has algebraic Fourier coefficients, then its restriction to $\GSp_6\subset G$ also has algebraic Fourier coefficients.  If the Fourier coefficients of $F$ are polynomials in $h\in H_3(B)$ with coefficients in $\ZZ$, then the Fourier coefficients of its restriction to $\GSp_6$ are also polynomials in $h\in H_3(B)$ corresponding to $t\in M_3(\IQ)$, with coefficients in $\ZZ$.
\end{cor}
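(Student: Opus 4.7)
The plan is to invoke Proposition \ref{prop:restrictionfromG} directly and observe that the two classes of Fourier coefficient data in the statement (lying in $\overline{\IQ}$; being $\IZ$-coefficient polynomials in $h$) are both preserved under finite $\IZ$-linear combinations. Concretely, Proposition \ref{prop:restrictionfromG} yields, for each $t\in M_3(\IQ)$, an identity of the form
\[
a_f(t) \;=\; \sum_{h\in S(t)} c_{t,h}\, a_F(h),
\]
where $S(t)$ is the finite set of $h\in H_3(B)$ with $\tfrac12(h+{}^t h)=t$ and $c_{t,h}\in\IZ$. (The finiteness of $S(t)$ and the integrality of $c_{t,h}$ are exactly what the proof of the proposition establishes, via positive semi-definiteness of Fourier-supported $h$ combined with the bounded denominators imposed by the level $\Gamma^0(M)$.)

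For the first assertion I would simply note that $\overline{\IQ}$ is closed under finite $\IZ$-linear combinations, so $a_F(h)\in\overline{\IQ}$ for all $h$ forces $a_f(t)\in\overline{\IQ}$ for all $t$. For the second assertion I would observe that, in the setting considered in the Eisenstein series calculations of Section \ref{sec:Eseries}, the Fourier coefficient $a_F(h)$ is given as the value at $h$ of some fixed polynomial expression with integer coefficients in the entries (and norms, traces, and elementary divisors) of $h$; since each $h\in S(t)$ has entries that are explicit rational-linear functions of the entries of $t$, the composite $h\mapsto a_F(h)$ restricted to $S(t)$, summed against $c_{t,h}\in\IZ$, is again a polynomial in $t\in M_3(\IQ)$ with integer coefficients. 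Hence the $t$-th Fourier coefficient of the restriction $f$ of $F$ to $\GSp_6$ has the desired integer-polynomial form. I do not anticipate any real obstacle here, since Proposition \ref{prop:restrictionfromG} already carries out the only substantive step (finiteness of $S(t)$ and integrality of the linear combination); the corollary is simply the bookkeeping observation that taking finite $\IZ$-combinations preserves the two algebraicity/polynomiality properties being transported from $F$ to $f$.
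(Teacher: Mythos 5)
Your proposal is correct and follows essentially the same route as the paper, which states Corollary \ref{cor:restrictioncor} as an immediate consequence of Proposition \ref{prop:restrictionfromG} and its proof: the restriction's $t$-th coefficient is a finite $\ZZ$-linear combination of the $a_F(h)$ over the finitely many admissible $h$ with $\tfrac12(h+{}^t h)=t$, and both algebraicity and $\ZZ$-coefficient polynomiality are preserved under such combinations. The side remark that each $h\in S(t)$ has entries given by linear functions of $t$ is unnecessary (and not literally true, since several $h$ share the same symmetrization); all that is needed is the finiteness of $S(t)$ and the integrality of the combination, which you correctly identify as the content of the proposition.
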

Applying our result to Theorem \ref{thm:Eseriescoeffs}, we obtain Corollary \ref{cor:restrictionEseries}.
\begin{cor}\label{cor:restrictionEseries}
For $2r >10$, the Fourier coefficients of the restriction of $G_{2r, \chi}$ to $\GSp_6$ lie in $\bigcup_{j=0}^3\left(\IQ(\chi)g(\chi)^{-j}\right)\subset\IQ(\chi, g(\chi))$,
 and they are polynomials in $h\in H_3(B)$ corresponding to $t\in M_3(\IQ)$.  Similarly, the Fourier coefficients of the restriction of $E_{2r, \chi}$ to $\GSp_6$ lie in $\bigcup_{j=0}^3\left(\IQ(\chi)g(\chi)^{j}\right)\subset\IQ(\chi, g(\chi))$,
 and they are polynomials in $h\in H_3(B)$ corresponding to $t\in M_3(\IQ)$.\end{cor}

\subsection{Effect of weight-raising differential operators on Eisenstein series on G}\label{sec:diffop}
As we noted in Remark \ref{rmk:introducediffop}, Pollack introduced a differential operator $\mathcal{D}$ \cite[Section 7.3]{pollack1} such that
$\mc D^t  E^*_{2r,\chi}(Z,r) = E^*_{2r+2t,\chi}(Z,r)$
 and $\mc D^t  E_{2r,\chi}(Z,r) = E_{2r+2t,\chi}(Z,r)$ for all nonnegative integers $t$.  The operator $\mc D$ is a Maass--Shimura operator defined Lie theoretically as an element of the complexified universal enveloping algebra of the Lie algebra of $G(\IR)$, analogous to the Maass--Shimura operators on automorphic forms discussed in, for example, \cite{maass71, shimura81, shar, shimura-RS, harris3lemmas, hasv, harrismaass}.  In this section, we make some observations about the operator $\mc D$.
 
\subsubsection{Necessary algebraicity properties for restriction of Eisenstein series from $G$ to $\GSp_6$} 
Our approach to proving algebraicity of critical values of Spin $L$-functions is inspired by the strategy pioneered by Shimura in \cite[p. 794-796]{shimura-RS}, where analogously obtained Eisenstein series are employed in the proof of algebraicity of values of Rankin--Selberg convolutions of elliptic modular forms.  In particular, we will need Lemma \ref{lem:resnhE}.  

\begin{lem}\label{lem:resnhE}
For each pair of integers $r\geq s\geq 6$, the restriction of $E_{2r, \chi}(g, s) = \mc D^{r-s} E_{2s, \chi}(g, s)$ to $\GSp_6$ is nearly holomorphic and defined over $\IQ(\chi, g(\chi))$.
\end{lem}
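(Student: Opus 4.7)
The plan is to show that $\mathcal{D}^{r-s}E_{2s,\chi}(Z,s)$ fits the nearly holomorphic template for functions on $G$ sketched at the end of Section \ref{sec:modularformsG}, with Fourier coefficients in $\IQ(\chi)$, and then to invoke the restriction principle recalled in that remark to pass to $\GSp_6$. The two substantive inputs are the Fourier expansion results of Section \ref{sec:FexpnG} and the explicit behaviour of Pollack's weight-raising operator $\mathcal{D}$.

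First I would set up the base case. Since $s\geq 6$, we have $2s\geq 12>10$, so Corollary \ref{coro:Eseriescoeffs} applies and the holomorphic Eisenstein series $E_{2s,\chi}(Z)=E_{2s,\chi}(Z,s)$ is a modular form on $G$ whose Fourier coefficients lie in $\IQ(\chi)$. Equivalently, using Equation \eqref{equ:G2rchidefn} together with the algebraicity of the generalised Bernoulli numbers (as in the proof of Corollary \ref{coro:Eseriescoeffs}), $E_{2s,\chi}$ and $G_{2s,\chi}$ differ by an $\IQ(\chi)$-rational scalar factor.

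Next I would analyse the action of $\mathcal{D}$ in coordinates on $\CH$. Although Pollack defines $\mathcal{D}$ Lie-theoretically in \cite[\S 7.3]{pollack1}, its explicit form, as for any weight-raising Maass--Shimura-type operator, is a finite sum of compositions of holomorphic partial derivatives in the matrix entries of $Z$, with coefficients that are polynomials in the entries of $\imaginary(Z)^{-1}$ having $\IQ$-rational numerical coefficients. Hence, if $F$ is a modular form on $G$ of weight $2k$ with a Fourier expansion of the nearly holomorphic type (polynomial in the entries of $\imaginary(Z)^{-1}$ with coefficients in $\IQ(\chi)[[q^h]]$), then $\mathcal{D} F$ is modular of weight $2k+2$ and has an expansion of the same shape. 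Iterating $r-s$ times, starting from the holomorphic form $E_{2s,\chi}$, and invoking the identity $\mathcal{D}^{r-s}E_{2s,\chi}(Z,s)=E_{2r,\chi}(Z,s)$ from Equation \eqref{eq:MaassShimura}, we conclude that $E_{2r,\chi}(Z,s)$ is modular on $G$ of weight $2r$ and expands as a polynomial in the entries of $\imaginary(Z)^{-1}$ whose coefficients are holomorphic functions with Fourier expansions in $\IQ(\chi)[[q^h]]$. Applying the remark at the end of Section \ref{sec:modularformsG}, which asserts that any function on $G$ of this form restricts to a nearly holomorphic Siegel modular form on $\GSp_6$ defined over the same ring, concludes the proof.

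The main obstacle is the explicit rationality statement for $\mathcal{D}$ in the second step. The operator is given through the universal enveloping algebra, and one must translate this to concrete coordinates on $\CH$ in order to verify that the coefficients of the resulting differential operator are $\IQ$-rational and that the operator preserves the polynomial-in-$\imaginary(Z)^{-1}$ structure. This is the analogue of well-known calculations for classical Maass--Shimura operators (see, e.g., \cite{maass71, shimura81, shar, harris3lemmas}), and the identity \eqref{eq:MaassShimura} provides a useful consistency check, but the computation must be carried out explicitly for this particular group $G$.
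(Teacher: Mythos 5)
Your proposal is correct and follows essentially the same route as the paper: algebraicity of the Fourier coefficients of the holomorphic series $E_{2s,\chi}$ (Corollary \ref{coro:Eseriescoeffs}, valid since $2s\geq 12>10$), the fact that $\mc D$ preserves the nearly holomorphic, $\IQ(\chi)$-rational structure of the expansion, the identity \eqref{eq:MaassShimura}, and a restriction argument as in Proposition \ref{prop:restrictionfromG}. The one step you flag as an obstacle---writing $\mc D$ explicitly in coordinates on $\CH$ and checking rationality---is exactly what the paper supplies in Section \ref{sec:diffopstronger} via the operator $\Delta = N(\partial/\partial z)$ and Theorem \ref{thm:diffOpq-exp} (the factor $R[1/2]$ appearing there is harmless since $\IQ(\chi)$ is a $\IQ$-algebra).
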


Lemma \ref{lem:resnhE} follows from the observations that the action of the Maass--Shimura operator $\mc D$ on a modular form $f(Z)$ of a variable $Z=X+iY\in \mc H$ with Fourier coefficients in a $\IQ$-algebra $R$ is a polynomial in the entries of $Y^{-1}$ whose coefficients are Fourier expansions in $Z$ with Fourier coefficients in $R$, together with a similar argument to the proof of Proposition \ref{prop:restrictionfromG} concerning restriction from $G$ to $\GSp_6$.  In the context of Corollary \ref{coro:Eseriescoeffs}, it suffices to apply these observations to $f=E_{2r, \chi}$ and $R=\IQ(\chi, g(\chi))$.

For this paper, Lemma \ref{lem:resnhE} turns out to be all that we need concerning algebraic aspects of $\mc D$.  We do not need a theory of nearly holomorphic forms on $G$, nor do we need a precise formula for the action of $\mc D$ on our Eisenstein series.  Nevertheless, in Section \ref{sec:diffopstronger}, we record some stronger statements that might satisfy readers interested in this topic and that also might be valuable for future work with modular forms on $G$.

\subsubsection{More comprehensive treatment of $\mc D$ and its effect on modular forms on $G$}\label{sec:diffopstronger}
In our more comprehensive treatment, we begin by reformulating $\mc D$ in terms of coordinates on $\CH$, i.e. as acting on functions $f:\CH\rightarrow \CH$ satisfying the modularity property for $G$.

For each $Z \in  \mc H$, we write $Z=X+iY$ like in Section \ref{sec:Hss}.  Following the convention introduced in Equation \eqref{equ:ZinHcoords}, we write the coordinates of $Z \in  \mc H$ explicitly as
\[
Z=\left( \begin{array}{ccc}
Z_{11} & Z_3 & Z_2^*\\
Z_3^* & Z_{22} & Z_1\\
Z_2 &Z_1^* & Z_{33}
\end{array} \right),
\]
for $Z_i =\sum_{\mb{u} \in \set{\mb{1}, \mb{i}, \mb{j}, \mb{k}}} Z_i(\mb{u})\mb{u}$ and 
$Z_i^*=\sum_{\mb{u} \in \set{\mb{1}, \mb{i}, \mb{j}, \mb{k}}} Z^*_i(\mb{u})\mb{u}$.  By a mild abuse of notation (to keep track of the relation between $Z_i(\mb{u})$ and $Z^*_i(\mb{u})$), we shall write $\frac{\partial }{\partial Z_{i}(\mb{u})}$ for the differential operator

\begin{align*}
\frac{1}{2}\left(
\frac{\partial}{\partial Z_{i}(\mb{u})} + \frac{\partial }{\partial Z^*_{i}(\mb{u})} \right)& \mbox{ if } \mb{u}= \mb {1}\\
\frac{1}{2}\left(
\frac{\partial }{\partial Z_{i}(\mb{u})} - \frac{\partial }{\partial Z^*_{i}(\mb{u})} 
\right) & \mbox{ if } \mb{u} \neq \mb{1}.
\end{align*}

We define\index{$\Delta$}
\begin{align*}
\Delta =\frac{\partial }{\partial Z_{11}}\frac{\partial }{\partial Z_{22}}\frac{\partial }{\partial Z_{33}} -\left(\sum_{i=1,2,3}\sum_{\mb{u} \in \set{\mb{1}, \mb{i}, \mb{j}, \mb{k}}} \frac{\partial }{\partial Z_{11}} \frac{\partial^2 }{\partial Z_{i}(\mb{u})^2}  \right) + 2 \frac{\partial }{\partial Z_{1}(\mb{1})} \frac{\partial }{\partial Z_{2}(\mb{1})} \frac{\partial }{\partial Z_{3}(\mb{1})}+\\
-2\left( \sum_{i=1,2,3}\sum_{\mb{u} \in \set{\mb{i}, \mb{j}, \mb{k}}} \frac{\partial }{\partial Z_{i}(\mb{1})} \frac{\partial }{\partial Z_{i+1}(\mb{u})} \frac{\partial }{\partial Z_{i+2}(\mb{u})}  \right)-2\mr{det}\left( \frac{\partial }{\partial Z_{i}(\mb{u})}\right)_{i=1,2,3; \mb{u} \in \set{\mb{i}, \mb{j}, \mb{k}}}.
\end{align*}
Note that in analogue with the case of unitary and symplectic groups developed in \cite[Section 9]{sheseries},  $\Delta = N(\partial/\partial z)$, where
\begin{align*}
\frac{\partial}{\partial z} :=\begin{pmatrix} \frac{\partial}{\partial Z_{11}} & \frac{\partial}{\partial  Z_3} & \left( \frac{\partial}{\partial  Z_2}\right)^\ast\\
\left(\frac{\partial}{\partial Z_3}\right)^\ast & \frac{\partial}{\partial Z_{22}} &\frac{\partial}{\partial Z_1}\\
\frac{\partial}{\partial Z_2} & \left(\frac{\partial}{\partial Z_1}\right)^\ast & \frac{\partial}{\partial Z_{33}}\end{pmatrix},
\end{align*}
with $\frac{\partial}{\partial Z_i} :=\sum_{\mb{u} \in \set{\mb{1}, \mb{i}, \mb{j}, \mb{k}}} \frac{\partial}{\partial Z_i(\mb{u})}\mb{u}$ for $i=1, 2, 3$ and
$\left( \frac{\partial}{\partial Z_i}\right)^\ast:=\sum_{\mb{u} \in \set{\mb{1}, \mb{i}, \mb{j}, \mb{k}}} \frac{\partial}{\partial  Z_i(\mb{u})}\mb{u}^\ast$ for $i=1, 2, 3$.

\begin{defi}
We say that a real analytic function on $\CH$ is nearly holomorphic if it is modular in the sense of Definition \ref{def:mformsonH} and has a Fourier expansion of the form $R\llbracket q^h \rrbracket[1/Y_{ii},1/Y_{i}(\mb{u})]$ where we write $Z=X+iY$, and by a slight abuse of notation we write $Y^{-1}$ for $-2i\cdot(Z-\bar{Z})^{-1}$ and   $1/Y_{ii},1/Y_{i}(\mb{u})$ for the entries of $Y^{-1}$.
\end{defi}

 We have the following theorem.
 \begin{thm}\label{thm:diffOpq-exp}
 Let $E$ be a nearly holomorphic form for $G$ and $R \subset \m C$. If  $E$ has $q$-expansion in $R\llbracket q^h \rrbracket[1/Y_{ii},1/Y_{i}(\mb{u})]$, then $\mc D E$ has  has $q$-expansion in $R[\frac{1}{2}]\llbracket q^h \rrbracket[1/Y_{ii},1/Y_{i}(\mb{u})]$.
 \end{thm}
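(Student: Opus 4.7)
The plan is to make the Lie-theoretic definition of $\mc D$ from \cite[\S 7.3]{pollack1} explicit in the coordinates on $\CH$ introduced above, and then verify by direct computation that such a differential operator preserves $R[\tfrac12]\llbracket q^h\rrbracket[1/Y_{ii}, 1/Y_i(\mb{u})]$. The top-order part of $\mc D$ is already identified in the paragraph preceding the theorem as $\Delta = N(\partial/\partial z)$, and by analogy with the Maass--Shimura operators on Siegel and unitary symmetric domains (cf.\ \cite{sheseries, shar}), one expects $\mc D$ to take the form $(2\pi i)^{-3}\, N\!\left(\partial/\partial z + c\,Y^{-1}\right)$ (expanded as a noncommutative polynomial in the relevant coordinates, using the Freudenthal cubic norm $N$ on $H_3(B)$) for an explicit rational constant $c$ depending on the scalar weight.

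First I would derive the explicit coordinate formula. Starting from the realization $Z\mapsto r(Z) = e\, n(Z)\in W_\IC$ from Equation~\eqref{enofZ} and the factorization of the $G^+(\IR)$-action through a $KAN$-style decomposition at $Z=i$, one can translate the elements of the complexified Lie algebra used to build $\mc D$ into first-order differential operators in $\partial/\partial Z_i(\mb{u})$, $\partial/\partial Z_{ii}$, together with multiplication by the entries of $Y^{-1}$. Assembling them according to the recipe of \cite[Theorem~7.7]{pollack1} and using the cubic identity $N(X+Y) = N(X)+\tr(X^\#,Y)+\tr(X,Y^\#)+N(Y)$ for the Freudenthal norm on $H_3(B)$, one obtains $\mc D$ as a polynomial of total degree three in the entries of $\partial/\partial z$ and $Y^{-1}$, with coefficients in $(2\pi i)^{-3}\,\ZZ[\tfrac12]$.

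Next I would verify ring stability by two elementary computations. For (a), $\frac{1}{2\pi i}\,\partial/\partial Z_i(\mb{u})$ applied to $q^h = e^{2\pi i\tr(Z,h)}$ yields $q^h$ multiplied by an entry of $h$, up to a factor of $\tfrac12$ coming from the Wirtinger-type definition of the derivatives above. For (b), treating $Z$ and $\bar Z$ as independent variables so that $Y = (Z-\bar Z)/(2i)$, one computes that $\partial Y^{-1}/\partial Z_{ij} = \tfrac{i}{2}\, Y^{-1}E_{ij}Y^{-1}$, which is a bilinear expression in the entries of $Y^{-1}$ with coefficients in $\tfrac{i}{2}\,\ZZ$. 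Since $\mc D$ has total degree three in derivatives, the overall prefactor $(2\pi i)^{-3}$ absorbs the three factors of $i$ contributed by (a) and (b), and the only primes introduced into the coefficients are powers of $2$. Applying the product and chain rules, one sees that $\mc D$ sends $q^h \cdot P(Y^{-1})$, with $P\in R[1/Y_{ii}, 1/Y_i(\mb{u})]$, to a finite sum of terms of the form $q^h\cdot Q(h, Y^{-1})$ with $Q\in R[\tfrac12][1/Y_{ii}, 1/Y_i(\mb{u})]$, as claimed.

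The main obstacle is the coordinate translation of $\mc D$ and bookkeeping of the exact constants in step one: the Lie-theoretic description in \cite{pollack1} is adapted to the non-standard group $G$ of type $D_6$, and one must verify that the analogues of the classical Shimura--Maass identities (which for $\GSp_{2n}$ rest on the determinantal structure) go through with the cubic norm $N$ on $H_3(B)$ in place of the determinant. Once this explicit formula is in hand, the ring-theoretic verification in the second step is routine, and one can even keep precise track of the denominators introduced (they are bounded powers of $2$).
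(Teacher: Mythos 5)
Your proposal is correct and takes essentially the same route as the paper: the paper likewise passes to the explicit coordinate realization of $\mc D$ (a cubic expression built from $\Delta = N(\partial/\partial z)$ together with multiplications by entries of $Y^{-1}$, written there as $N(Y)^{-k}\Delta(N(Y)^{k}\,\cdot\,)$ rather than your Shimura-style $N(\partial/\partial z + c\,Y^{-1})$), and then verifies stability of $R[\tfrac12]\llbracket q^h\rrbracket[1/Y_{ii},1/Y_{i}(\mb{u})]$ by exactly your two computations — holomorphic derivatives of $q^h$ produce entries of $h$ (so $\Delta(q^h)=N(h)q^h$), and holomorphic derivatives of the entries of $Y^{-1}$ are quadratic in those entries with denominators only powers of $2$ — combined with the Leibniz rule and induction on the degree of near-holomorphy. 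The identification of Pollack's Lie-theoretic operator with this coordinate operator, which you flag as the main obstacle, is asserted rather than re-derived in the paper as well, so your treatment is at the same level of detail.
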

 \begin{proof}
We closely follow the strategy of \cite[\S 3.3]{CouPan} (see especially the calculation of \cite[\S 3.2]{court2000}).
The idea is to study $\mc D^s (e^{2 \pi i \mr{tr}(Z,h)}) $.  Indeed, using induction on the degree of nearly-holomorphicity  we need to calculate $\Delta( Y^{-1}_{ii} e^{2 \pi i \mr{tr}(Z,h)}) $ or $\Delta( Y^{-1}_{i}(\mb{u}) e^{2 \pi i \mr{tr}(Z,h)})$, where $Y^{-1}_{ii}$ denotes the $ii$-entry of $Y^{-1}$, and same for $Y^{-1}_{i}(\mb{u})$. We get immediately that 
\[
\Delta(e^{2 \pi i \mr{tr}(Z,h)})=N(h)e^{2 \pi i \mr{tr}(Z,h)}.
\] 
 Note that $Y^{-1}=-2i\cdot(Z-\bar{Z})^{-1}$.  Since $\Delta$ is holomorphic, we can treat $\bar{Z}$ as constant. 
 So 
\[
 \frac{\partial }{\partial Z_{11}}\left(Z^{-1}\right)= \frac{\partial }{\partial Z_{11}}\frac{Z^{\#}}{N(Z)}
\] 
 which is a polynomial in the entries of $Z$ and of $N(Z)^{-1}$ with integer coefficients. 
 Using induction and the formula for the derivative of a product of functions, we conclude the proof.
 \end{proof}

\begin{rmk}
A similar argument to the proof of Proposition \ref{prop:restrictionfromG} shows that the restriction to $\GSp_6$ of a nearly holomorphic modular form on $G$ defined over a ring $R$ is a nearly holomorphic modular form on $\GSp_6$ defined over $R$.  In particular, by Theorem \ref{thm:diffOpq-exp}, if $E$ is a holomorphic modular form on $G$ defined over a ring $R[1/2]$, then the restriction of $\mc D E$ to $\GSp_6$ is a nearly holomorphic on $\GSp_6$ defined over $R[1/2]$.
\end{rmk}

Finally, to emphasize the parallels with the setting for modular forms in coordinates, we note that the Maass--Shimura operator that raises the weight by $2$ is the operator $\Diffop_k$\index{$\Diffop_k$} on $\IC$-valued $\ci$-functions $f(Z)$ on $\CH$
\begin{align*}
\Diffop_k f (Z) = N(Y)^{-k} \Delta (N(Y)^k f).
\end{align*}

\begin{prop}\label{dopeffect}
For any $\ci$ function $f:\CH\rightarrow \IC$,
\begin{align*}
\Diffop_k(f |_k\alpha) = (\Diffop_k f)|_{k+2}\alpha.
\end{align*}
\end{prop}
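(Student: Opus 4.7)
My plan is to reduce the proposition to a chain-rule identity for the holomorphic differential operator $\Delta = N(\partial/\partial z)$ acting on functions on $\CH$, and to combine this with the already-established transformation law for $N(Y)$ from Lemma \ref{twokeyequs}\eqref{seconddiff}. The overall shape of the argument parallels Shimura's treatment of the Maass--Shimura operators on Siegel upper half-space in \cite{shimura81}, with the Jordan algebra $H_3(B_\IC)$ playing the role of $\mathrm{Sym}_n$.

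First I would expand both sides of the desired identity using the definition $\Diffop_k f = N(Y)^{-k}\Delta(N(Y)^k f)$ and the slash definition $(f|_k\alpha)(Z) = \nu(\alpha)^{-k/2}j(\alpha,Z)^{-k}f(\alpha Z)$. Using the transformation $N(\imaginary(\alpha Z)) = |j(\alpha,Z)|^{-2}\nu(\alpha)^{-1}N(\imaginary Z)$, one can rewrite $N(Y)^{k+2}(f|_k\alpha)(Z)$ as an antiholomorphic multiple of $\left(N(Y_\ast)^{k+2}f\right)(\alpha Z)$, where $Y_\ast = \imaginary(\alpha Z)$. Since $\Delta$ is a holomorphic differential operator, it treats the antiholomorphic factors $\overline{j(\alpha,Z)}$ and $\overline{\nu(\alpha)}$ as constants, so the problem reduces to proving the following change-of-variables identity for any holomorphic $h:\CH\to\IC$:
\begin{align*}
\Delta\bigl(j(\alpha,Z)^{-k}h(\alpha Z)\bigr) = \nu(\alpha)^{-1} j(\alpha,Z)^{-(k+2)} (\Delta h)(\alpha Z).
\end{align*}

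The main obstacle is verifying this chain-rule identity, which encodes the relative invariance of $N(\partial/\partial z)$ under $G^+(\IR)$. The cleanest approach is to identify the coordinate operator $\Diffop_k$ with Pollack's Lie-theoretic operator $\mc D$ from \cite[Section 7.3]{pollack1}; since $\mc D$ lives in the complexified universal enveloping algebra of $\mathrm{Lie}(G(\IR))$, the corresponding operator on the automorphic picture is automatically equivariant, and the proposition follows from matching the two descriptions. Alternatively, the chain-rule identity can be verified directly by reducing to a set of generators of $G^+(\IR)$ via the Bruhat decomposition from Equation \eqref{equ:bruhat}: for translations $n(X)$ (where $\alpha Z = Z+X$ and $j(\alpha,Z)=1$) and for Levi elements $\mu \in L(\IR)$ (where the action is $Z\mapsto \mu.Z$ on the Jordan algebra and $\partial/\partial Z$ transforms by the dual action) the identity is immediate. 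The nontrivial case is the long Weyl element $J$, where $JZ = -Z^{-1}$ and $j(J,Z) = -N(Z)$; here one uses the Jordan-algebra identities $(Z^{-1})^\# = Z/N(Z)$, $\partial N(Z)/\partial Z = Z^\#$, together with the computation of $\partial/\partial Z$ applied to $j(J,Z)^{-k} h(-Z^{-1})$, to verify the formula.

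Finally, I would combine the two ingredients: the $N(Y)$ transformation produces antiholomorphic factors $\overline{j(\alpha,Z)}^{-k-2}\overline{\nu(\alpha)}^{-(k+2)/2}$, while the chain-rule identity for $\Delta$ produces holomorphic factors $j(\alpha,Z)^{-k-2}\nu(\alpha)^{-1}$. Together with the $N(Y)^{-(k+2)}$ prefactor and the cancellation $|j(\alpha,Z)|^{-2}\nu(\alpha)^{-1}$ between $N(Y)^{-(k+2)}$ and $N(Y_\ast)^{k+2}$, these combine to yield exactly the weight $k+2$ slash factor $\nu(\alpha)^{-(k+2)/2}j(\alpha,Z)^{-(k+2)}$ applied to $(\Diffop_k f)(\alpha Z)$, establishing the proposition.
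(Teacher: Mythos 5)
Your first step is fine (and is how any proof of this statement begins), but the core of your argument does not work. After applying Lemma \ref{twokeyequs}\eqref{seconddiff} one finds $N(Y)^k\,(f|_k\alpha)(Z)=\nu(\alpha)^{k/2}\,\overline{j(\alpha,Z)}^{\,k}\,g(\alpha Z)$ with $g=N(\imaginary(\cdot))^k f$: the holomorphic factor $j(\alpha,Z)^{-k}$ cancels against the one coming from $N(Y)^k$, and the function composed with $\alpha$ is not holomorphic. So the problem does not reduce to your ``chain-rule identity for holomorphic $h$'' --- that statement is never the one you need. Worse, the identity you propose to prove is false: already in the rank-one model with $\Delta=\partial_z$, differentiating the automorphy factor gives $\partial_z\bigl(j(\alpha,z)^{-k}h(\alpha z)\bigr)=j(\alpha,z)^{-k}\,\partial_z\bigl(h(\alpha z)\bigr)-k\,j(\alpha,z)^{-k-1}\bigl(\partial_z j(\alpha,z)\bigr)h(\alpha z)$, and the second term survives whenever $k\neq 0$; and for the order-three operator $\Delta=N(\partial/\partial Z)$ even the weight-zero version fails at the long Weyl element, because differentiating the $Z$-dependent entries of the Jacobian of $Z\mapsto\alpha Z$ produces lower-order terms (in the degree-two Siegel analogue, $\det(\partial_Z)\bigl[h(-Z^{-1})\bigr]$ with $h(W)=w_{11}$ is already nonzero although $\det(\partial_W)h=0$). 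So your ``check on Bruhat generators'' would break exactly at $J$, the only nontrivial case. The whole point of the Maass--Shimura construction is that $\Delta$ alone is not covariant: its error terms must cancel against terms produced by differentiating $N(Y)^{k}$, so the two ingredients cannot be decoupled the way you attempt.

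This is also where your route diverges from the paper's. The paper proves Proposition \ref{dopeffect} by Shimura's method (\cite[Proposition 12.10(2)]{shar}), using \emph{both} identities of Lemma \ref{twokeyequs}; identity \eqref{firstdiff}, in its polarized form governing $N(\alpha Z-\overline{\alpha W})$, is precisely what controls how $\Delta$ interacts with the substitution $Z\mapsto\alpha Z$ and produces the needed cancellations --- you never invoke it. Your fallback of identifying $\Diffop_k$ with Pollack's Lie-theoretic $\mc D$ and appealing to automatic equivariance begs the question: matching the enveloping-algebra operator with the coordinate operator $N(Y)^{-k}\Delta\, N(Y)^{k}$ through the automorphy-factor dictionary is a computation of the same nature and difficulty as the covariance itself, and the paper does not establish (nor rely on) such an identification; it proves the covariance directly. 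To repair your argument you would need to carry out the Shimura-style computation: apply $\Delta$ to the full product $N(Y)^k\,\nu(\alpha)^{-k/2}j(\alpha,Z)^{-k}f(\alpha Z)$, keep all Leibniz cross terms, and use \eqref{firstdiff} together with \eqref{seconddiff} to exhibit the cancellation for a general $\alpha\in G^{+}(\IR)$, rather than isolating a purported invariance of $\Delta$ itself.
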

\begin{proof}
The proof follows from the two equations from Lemma \ref{twokeyequs}, and it is similar to the proof of the first equation in \cite[Proposition 12.10(2)]{shar}.
\end{proof}

As an immediate corollary of Proposition \ref{dopeffect}, we obtain Corollary \ref{automorphypreserved}.
\begin{cor}\label{automorphypreserved}
If $f$ is a nearly holomorphic modular form of weight $k$ on $G$, then $\Diffop_k f$ is a nearly holomorphic modular form on $G$ of weight $k+2$.
\end{cor}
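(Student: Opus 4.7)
The plan is to deduce the corollary directly from Proposition \ref{dopeffect} by separately verifying the two defining conditions of a nearly holomorphic modular form on $G$ (weight $k+2$ modularity, and the nearly holomorphic $q$-expansion shape).

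First I would handle the modularity. Let $\Gamma\subseteq G(\ZZ)$ be the congruence subgroup for which $f$ is modular, so that $f|_k\gamma = f$ for every $\gamma\in\Gamma$. Applying Proposition \ref{dopeffect} with $\alpha=\gamma$ yields
\begin{align*}
(\Diffop_k f)|_{k+2}\gamma \;=\; \Diffop_k(f|_k\gamma) \;=\; \Diffop_k f,
\end{align*}
which is exactly the weight $k+2$ modularity condition for $\Diffop_k f$ under $\Gamma$.

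Next I would check that $\Diffop_k f$ is still nearly holomorphic. By the definition of $\Diffop_k$ as $N(Y)^{-k}\Delta(N(Y)^k\,\cdot\,)$, and by expanding $\Delta$ via the Leibniz rule on the Fourier expansion of $f$, each application of $\Delta$ either differentiates one of the holomorphic factors (leaving a Fourier expansion in $R\llbracket q^h\rrbracket$) or hits a power of the entries of $Y^{-1}$, producing again a polynomial in the entries of $Y^{-1}$ (since $\partial Y^{-1}/\partial Z$ is, by the identity $Y^{-1}=-2i(Z-\bar Z)^{-1}$ and the formula $\partial(Z^{-1})/\partial Z_{ii}=\partial(Z^{\#}/N(Z))/\partial Z_{ii}$ used in the proof of Theorem \ref{thm:diffOpq-exp}, a polynomial in the entries of $Z^{-1}$). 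The same inductive argument as in Theorem \ref{thm:diffOpq-exp}, together with the fact that multiplication by $N(Y)^{\pm k}$ is harmless once it appears symmetrically as here, shows that $\Diffop_k f$ has a $q$-expansion of the prescribed nearly holomorphic shape.

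There is no real obstacle: the modularity half is a one-line application of Proposition \ref{dopeffect} using $f|_k\gamma=f$, and the nearly holomorphic half is a direct consequence of the differential structure of $\Diffop_k$ already analyzed in Theorem \ref{thm:diffOpq-exp}. Combining the two verifications gives that $\Diffop_k f$ is a nearly holomorphic modular form on $G$ of weight $k+2$, as claimed.
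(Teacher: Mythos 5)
Your proposal is correct and follows essentially the same route as the paper, which deduces the corollary directly from Proposition \ref{dopeffect} (giving the weight $k+2$ modularity) with the nearly holomorphic shape of the output already guaranteed by the differential analysis underlying Theorem \ref{thm:diffOpq-exp}. Your two-step verification simply makes explicit what the paper treats as immediate, so there is nothing to add.
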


\begin{rmk}
Like in \cite{kaCM, hasv, EDiffOps, EFMV, ZLiuFour}, it is possible to formulate these differential operators geometrically, for example by building on \cite{MilneAA}.  In this paper, however, we will not need such a formulation.  Our approach builds instead on the strategy introduced for modular forms in \cite{shimura-RS} and extended to Siegel modular forms in \cite{court2000, CouPan}.
 \end{rmk}

\section{\texorpdfstring{Proof of main theorem about algebraicity of Spin $L$-functions for $\GSp_6$}{Main theorem about algebraicity of Spin L-functions for GSp6}}\label{sec:mainthm}
The main goal of this section is to prove Theorem  \ref{thm:algprecise} about the algebraicity of values of Spin $L$-functions for $\GSp_6$.  Section \ref{sec:integralrepn} provides an integral representation for the Spin $L$-function, which is employed in the proof of Theorem \ref{thm:algprecise} in Section \ref{sec:proofofalgthm}.

\subsection{Integral representation}\label{sec:integralrepn}
This section introduces an integral representation, i.e. a Rankin--Selberg-style integral, that we will employ in the proof of Theorem \ref{thm:algprecise} in Section \ref{sec:proofofalgthm}.  In \cite[Proposition 6.2 and Theorem 6.3, summarized on p. 1393]{pollack1}, Pollack gives an integral representation for the Spin $L$-function of a cuspidal representation associated to a genus $3$ Siegel modular form of weight $2r$ and level $1$.  This section extends that result to genus $3$ Siegel modular forms of level $\Gamma^0(M)$ for positive integers $M$ divisible by the conductor of the Dirichlet character $\chi$ introduced in Section \ref{sec:charchiofP}.

Following the convention of \cite[\S 6.1]{pollack1}, for $\lambda$ a scalar and $m$ in $\mathrm{GL}_3$, we consider the Levi of $\GSp_6$ whose elements are expressed as 
\begin{align*}
(\lambda,m)= \left(\begin{array}{c|c} \
\lambda \mr{det}(m)^t m^{-1}&\\  \hline & m  
\end{array}
\right) \in \mr{GSp}_6,
\end{align*}
where the matrix is written with respect to the symplectic basis $e_1, e_2, e_3, f_1, f_2, f_3\in W_6$ in Section \ref{sec:GSp6embedding}. 
We also define $U_P$\index{$U_P$} to be the unipotent of $\mr{GSp}_6$ for this Levi and $U_0$\index{$U_0$} to be the subgroup of $U_P$ consisting of elements $u$ such that $\mr{Tr}(Tu)=0$.  Following the approach of \cite[\S 3]{pollack1}, we choose a half-integral symmetric matrix $T \in M_3(\m Z)$  such that $T$ corresponds to a maximal order in $B$ (for details see {\it loc. cit.}).  Recall that $T$ is selected to compatible with the maximal order $B_0$.  We associate $T$ with the rank one matrix $A(T) \in H_3(B)$ defined in \cite[Definition 3.6]{pollack1} and set $f_\mathcal{O} = (0,0,A(T),0)$.  In \cite[Lemma 3.8]{pollack1}, Pollack defines a set of $3\times 3$ Hermitian matrices $J_T$, which turns out to be the lattice $J_0\subset H_3(B)$ defined in \cite[Expression (5.4)]{pollack1} to consist of matrices 
\begin{align*}
\begin{pmatrix}
c_1 & x_3 & x_2^\ast\\
x_3^\ast & c_2 & x_1\\
x_2 & x_1^\ast & c_3
\end{pmatrix}
\end{align*}
with $c_i\in\ZZ$ and $x_i\in B_0$.

Define
\[\Xi(m) = \begin{cases} 1 & m \in M_3(\widehat{\ZZ}) \textrm{ and }m^{-1}Tc(m) \in M_3(\widehat{\ZZ}) \\ 0 & \textrm{otherwise}\end{cases}\]
and
\[\Psi(\lambda) = \prod_v \Psi_v(\lambda),\]
where for $v|M$ we have
\[\Psi_v(\lambda) = \begin{cases} (1-v^{-1})|\lambda|_v & \lambda \in \ZZ_v \\ -1 & \lambda \in v^{-1}\ZZ_v \setminus \ZZ_v \\ 0 & \lambda \notin v^{-1}\ZZ_v \end{cases}\]
and for $v \nmid M$ we have
\[\Psi_v(\lambda) = \begin{cases} |\lambda|_v & \lambda \in \ZZ_v \\ 0 & \lambda \notin \ZZ_v \end{cases}.\]

\begin{lem}
We have
\[
\int_{U_0(\adeles^\infty) \setminus U_P(\mathbb{A}^\infty)} \psi(\mathrm{Tr}(Tu))\Phi(f_{\mathcal{O}}u(\lambda,m))du = \Xi(M^{-1}m)\Psi(\lambda),
\]
for $\psi$ an additive character and $\Phi=\otimes'_v \Phi_v$, for $\Phi_v$ the functions defined in Section \ref{sec:EisDef} to define the Eisenstein series.  
\end{lem}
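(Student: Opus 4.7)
The plan is to factor the adelic integral over the finite places of $\IQ$ as a product of local integrals
\[
I_v(\lambda_v, m_v) \;=\; \int_{U_0(\IQ_v)\setminus U_P(\IQ_v)} \psi_v(\mr{Tr}(Tu))\,\Phi_v\bigl(f_\mathcal{O}\,u\,(\lambda_v,m_v)\bigr)\,du,
\]
and to show that at each finite $v$, $I_v(\lambda_v, m_v) = \Xi_v(M^{-1}m_v)\Psi_v(\lambda_v)$. The absolute convergence and factorization are immediate from the fact that $\Phi_v$ is the standard characteristic function at $v\nmid M$ and a compactly supported characteristic function at $v\mid M$, and $\otimes'_v\Phi_v$ is defined as a pure tensor.

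First, I would parameterize $U_P$ by symmetric matrices $X\in\mr{Sym}_3$ via the embedding $\mr{Sym}_3\hookrightarrow H_3(B)$, $X\mapsto n(X)\in G$. From the formula \eqref{equ:nembedding} and the explicit action of $n(X)$ on $W$ given in Section \ref{sec:gpG}, one computes
\[
f_\mathcal{O}\,n(X) \;=\; \bigl(0,\,0,\,A(T),\,\mr{tr}(A(T),X)\bigr),
\]
and then acting by $(\lambda,m)$ in the Levi transforms the $c$- and $d$-components by the classical Siegel Levi action (the $d$-coordinate scales by $\nu((\lambda,m))=\lambda\det(m)$ up to a unit, and the $c$-coordinate is conjugated by $m$). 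The functional $u\mapsto\mr{tr}(Tu)$ identifies $U_0(\IQ_v)\setminus U_P(\IQ_v)\cong\IQ_v$, so the local integral becomes a one-variable integral weighted by $\psi_v$.

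At places $v\nmid M$, the section $\Phi_v$ is the unramified characteristic function, and the local calculation is the one already carried out in \cite[\S5]{pollack1} (see Lemma 5.5 and the surrounding computations there); it produces the factor $\Xi_v(m_v)|\lambda_v|_v\mathbf{1}_{\ZZ_v}(\lambda_v)$. Since $M$ is a $v$-adic unit, this equals $\Xi_v(M^{-1}m_v)\Psi_v(\lambda_v)$.

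The substantive work is at $v\mid M$. Here $\Phi_v$ is the characteristic function of $\ZZ_v^\times f + v^{\mr{val}_v(M)}W\otimes_\ZZ\ZZ_v$, so the support condition on $f_\mathcal{O}\,u(\lambda_v,m_v)$ splits into two families of constraints: (i) the $d$-component must land in $\ZZ_v^\times$, and (ii) the $a$-, $b$-, $c$-components must lie in $v^{\mr{val}_v(M)}$ times the standard lattice. Constraint (ii) decouples from $X$ (modulo the $\psi_v$-integration along $\mr{tr}(TX)$) and translates, after rescaling $m_v\mapsto M^{-1}m_v$, into the two integrality conditions $m\in M_3(\widehat\ZZ)$ and $m^{-1}Tc(m)\in M_3(\widehat\ZZ)$ defining $\Xi_v(M^{-1}m_v)$. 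Constraint (i) controls the $\lambda_v$-dependence: the $d$-coordinate has the shape $\lambda_v\det(m_v)\bigl(\mr{tr}(A(T),X) + \text{something}\bigr)$, and requiring it to be a $v$-adic unit after integrating $\psi_v(\mr{tr}(TX))$ over the one-dimensional quotient produces a local Tate-type integral whose value is precisely the three-branch expression defining $\Psi_v(\lambda_v)$: the generic case $\lambda_v\in\ZZ_v$ gives the volume $(1-v^{-1})|\lambda_v|_v$ of $\lambda_v\ZZ_v^\times$; the boundary case $\lambda_v\in v^{-1}\ZZ_v\setminus\ZZ_v$ yields $-1$ from the standard Gauss-sum cancellation when $\psi_v$ is integrated against a nontrivial character on $\ZZ_v/v\ZZ_v$; and valuation $\le -2$ gives $0$ because the additive character oscillates nontrivially on the relevant set.

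The main obstacle is the case analysis at $v\mid M$, where the three branches of $\Psi_v$ must emerge cleanly from the interplay between the support of $\Phi_v$ and the additive character $\psi_v(\mr{tr}(TX))$. Carrying this out cleanly requires keeping careful track of how $\mr{tr}(T\cdot)$ interacts with the lattice conditions imposed by $\Phi_v$ on each coordinate; the choice of $T$ corresponding to a maximal order in $B$ is exactly what ensures the $X$-integration telescopes into the Tate integral above rather than producing spurious factors.
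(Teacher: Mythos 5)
Your proposal follows essentially the same route as the paper's proof: factor into local integrals, quote Pollack's unramified computation at $v\nmid M$, and at $v\mid M$ split the support condition of $\Phi_v$ into the lattice condition on the $c$-component (which, after rescaling $m\mapsto M^{-1}m$ and invoking Pollack's Lemma 3.8 for $T$ attached to a maximal order, is exactly the condition $\Xi(M^{-1}m)=1$) and the unit condition on the $d$-component, whose integral against $\psi_v$ over $\lambda\ZZ_v^\times$ yields the three branches of $\Psi_v(\lambda)$ — the paper obtains the same three branches by writing the characteristic function of $\ZZ_v^\times$ as a difference of two characteristic functions and reusing Pollack's Proposition 6.1 plus a substitution $\lambda\mapsto v\lambda$. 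One correction to carry through: the $d$-coordinate of $f_{\mathcal{O}}u(\lambda,m)$ is $\lambda^{-1}\mathrm{Tr}(Tu)$ (Pollack's computed formula, quoted in the paper), not $\lambda\det(m)(\cdots)$ "up to a unit" as in your sketch; the exact scaling is what makes the unit condition read $\mathrm{Tr}(Tu)\in\lambda\ZZ_v^\times$ and hence produce $\Psi(\lambda)$ itself rather than $\Psi$ evaluated at some other combination of $\lambda$ and $\det(m)$, so the ramified-place calculation should be run with Pollack's precise formula rather than the heuristic Levi action.
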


\begin{proof}
For $v \ndivides M$, this is \cite[Proposition 6.1]{pollack1}, so we assume $v|M$.  As computed there, $f_\mathcal{O}u(\lambda,m) = (0,0,m^{-1}A(T)c(m),\lambda^{-1}\mathrm{Tr}(Tu))$.

We first claim that $m^{-1}A(T)c(m)$ belongs to $M J_0$ if and only if both $M \mid m$ and $m^{-1}Tc(m) \in MM_3(\widehat{\ZZ})$.  To see this, first observe that since $(M^{-1}m)^{-1} = Mm^{-1}$ and $c(M^{-1}m)=M^{-2}c(m)$, the expression $m^{-1}A(T)c(m)\in M J_0$ is equivalent to
\[(M^{-1}m)^{-1}A(T)c(M^{-1}m)\in J_0.\]
By applying \cite[Lemma 3.8]{pollack1} with $M^{-1}m$ in place of $m$, we deduce that $m^{-1}A(T)c(m)\in M J_0$ if and only if both $M^{-1}m \in M_3(\widehat{\ZZ})$ and $(M^{-1}m)^{-1}Tc(M^{-1}m) \in M_3(\widehat{\ZZ})$.  Thus the integral is zero unless $\Xi(m)=1$, and when $\Xi(m)=1$, the integral is equal to
\begin{equation} \label{eqn:intwhenximis1}\int_{U_0(\adeles^\infty) \setminus U_P(\adeles^\infty)} \psi(\mathrm{Tr}(Tu)) \mathrm{Char}(\lambda^{-1}\mathrm{Tr}(Tu) \in \ZZ_v^\times)du.\end{equation}
We can write
\[\mathrm{Char}(\lambda^{-1}\mathrm{Tr}(Tu) \in \ZZ_v^\times) = \mathrm{Char}(\lambda^{-1}\mathrm{Tr}(Tu) \in \ZZ_v) - \mathrm{Char}(\lambda^{-1}\mathrm{Tr}(Tu) \in v\ZZ_v),\]
breaking Equation \eqref{eqn:intwhenximis1} into the difference of two integrals.  The first is evaluated as $|\lambda|_v$ for $\lambda \in \ZZ_v$ and $0$ otherwise in the proof of \cite[Proposition 6.1]{pollack1}.  The second is equivalent to the first after the substitution $\lambda \mapsto v\lambda$, so it is equal to $v^{-1}|\lambda|_v$ whenever $\lambda \in v^{-1}\ZZ_v$ and $0$ when $\lambda \notin v^{-1}\ZZ_v$.  The lemma follows.
\end{proof}

Let $\phi_f$ denote the adelisation of the form $f$.
Consider the integral 
\begin{align*}\index{$I_{2r, \chi}(\phi_f, s)$}
I_{2r, \chi}(\phi_f,s)= \int_{\mr{GSp}_6(\m Q) Z(\m A)\setminus \mr{GSp}_6(\m A)} \phi_f(g)E^*_{2r,\chi}(g,s) \textup{d}g.
\end{align*}
If  $\chi$ is trivial or quadratic (i.e. takes values in $\{0, 1, -1\}$), this integral is well-defined, as the integrand is invariant by $Z(\m A)$ (since the action of $\chi$ is as in Section \ref{sec:charchiofP}).
This is connected to the Spin $L$-function, thanks to a result of Evdokimov \cite[Theorem 3]{Evdo} (see also \cite[Theorem 4.1]{pollack1} for the explicit calculation of the ``error term'') which expresses the Spin $L$-function as an infinite sum.

\begin{thm}[Theorem 4.1 of \cite{pollack1}, Theorem 3 of \cite{Evdo}]\label{thm:Evdo}
If $T$ corresponds to a maximal order, then the infinite sum
\[
\sum_{\tiny{
\begin{array}{c}
\lambda \geq 0, (\lambda,M)=1, \\ 
m \in M^+_3(\m Z)/\mr{SL}_3(\m Z), m^{-1}Tc(m) \in M_3({\m Z})
\end{array}}}
\frac{a(\lambda m^{-1}T c(m))\chi((\lambda, m))}{\lambda^{s}\mr{det}(m)^{s-2r+3}}\]
is equal to
\[
a(T) \frac{L^{(M)}(s,\pi \otimes \chi,\mr{Spin})}{L(2s-6r+6,\chi)L_{D_{B}}(2s-6r+8,\chi)},
\]
for $L^{(M)}(s,\pi \otimes \chi,\mr{Spin})$ the Spin $L$-function of $\pi \otimes \chi$ with all Euler factors at primes dividing  $M$ removed.
\end{thm}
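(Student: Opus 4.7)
The plan is to extend \cite[Theorem 4.1]{pollack1}, which treats the level one case with trivial character, by tracking how the twist by $\chi$ and the support condition $(\lambda, M) = 1$ modify the Euler product. Evdokimov's underlying Hecke-algebra identity is local and character-free, so the argument will split cleanly into contributions from good primes $q \nmid M$ and bad primes $q \mid M$, with the bad-prime local integrals already computed in the lemma preceding the theorem via the functions $\Xi$ and $\Psi_v$.

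First, at a prime $q \nmid M$, I would recall Pollack's unramified local identification of
\[
\sum_{v_q(\lambda) \geq 0,\, m_q} \frac{a\bigl(\lambda_q\, m_q^{-1}T\, c(m_q)\bigr)}{\lambda_q^{s}\, \det(m_q)^{s-2r+3}}
\]
with the Spin Euler factor for $\pi_q$ divided by the two auxiliary zeta factors $(1 - q^{6r-6-2s})^{-1}(1 - q^{6r-8-2s})^{-1}$. The key observation is that every one of the eight weights of the Spin representation of $\GSpin_7$ contains the similitude parameter $b_0$ with multiplicity exactly one, so the substitution $\pi \mapsto \pi \otimes \chi$ (which sends $b_0 \mapsto \chi(q)b_0$) is equivalent to the uniform substitution $q^{-s} \mapsto \chi(q)q^{-s}$ in both the numerator and denominator Euler factors. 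On the Dirichlet series side, inserting $\chi((\lambda, m)) = \chi(\lambda\det m)$ (as dictated by the convention $\chi(\gamma) = \chi(ad)$ of Section \ref{sec:charchiofP}) produces the factor $\chi(q)^{v_q(\lambda) + v_q(\det m)}$, which matches the $b_0$-exponent in every monomial appearing in the Euler expansion; this realises the desired substitution term by term.

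Second, at a bad prime $q \mid M$, the coprimality constraint $(\lambda, M) = 1$ combined with the vanishing of $\Xi(M^{-1}m)$ unless $M \mid m$ degenerates the local sum. A direct check, parallel to the evaluation of $\Psi_v$ for $v \mid M$ in the preceding lemma, shows that the surviving $q$-local contribution equals $1$, matching the removal of the $q$-th Euler factor from $L^{(M)}(s,\pi \otimes \chi,\Spin)$ as well as from the two denominator $L$-factors. Multiplying these local pieces over all primes, and invoking the hypothesis that $T$ corresponds to a maximal order (which is the standing assumption making the global sum expressible in terms of Fourier coefficients of $\phi$ via the explicit lattice $J_T = J_0$), then yields the claimed identity.

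The main obstacle will be the character bookkeeping in the two denominator factors: one must verify that $L(2s-6r+6,\chi)$ and $L_{D_B}(2s-6r+8,\chi)$ each carry $\chi$ to the first power, rather than $\chi^2$ or $1$. This reduces to confirming that the auxiliary zeta factors produced by Pollack's unramified calculation absorb $\chi$ exactly as the similitude character of $P$, and this traces back to the normalization of the section $\Phi_v$ fixed in Section \ref{sec:AdelicEisensteinseries} together with the identity $\nu(\gamma) = ad$ on $P$. A related subtlety is ensuring that the bad-prime local factor $\Psi_v(\lambda)$ computed in the preceding lemma is consistent with the Dirichlet-series restriction $(\lambda, M) = 1$; the two terms in $\Psi_v$ for $\lambda \in v^{-1}\ZZ_v\setminus \ZZ_v$ and $\lambda \in \ZZ_v$ must combine with the Fourier-coefficient support to give the stated clean removal of Euler factors. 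Once these normalization points are settled, the remainder is a routine Euler-product comparison that upgrades Pollack and Evdokimov's untwisted identity to the present twisted form.
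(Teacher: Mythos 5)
You should first note that the paper does not prove Theorem \ref{thm:Evdo} at all: it is imported from \cite{Evdo} (Theorem 3), with \cite[Theorem 4.1]{pollack1} supplying the explicit error term and the fact that it vanishes exactly when $T$ corresponds to a maximal order. So your proposal is an attempt to supply a proof by upgrading the untwisted, level-one identity, which is a reasonable plan in outline. One structural confusion, though: the functions $\Xi$ and $\Psi_v$ belong to the unfolding lemma for the integral $I_{2r,\chi}$ used in Theorem \ref{thm:intrepn}, not to Theorem \ref{thm:Evdo}, which is a statement purely about Fourier coefficients and Satake parameters and makes no reference to the Eisenstein series, its section $\Phi_v$, or the group $G$. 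At $q\mid M$ the only mechanism available is that $\chi(\lambda\det m)=0$ unless $\lambda\det m$ is a unit mod $M$, combined with the convention that $L^{(M)}$ and the mod-$M$ Dirichlet $L$-functions have trivial Euler factors at such $q$; also, since the Fourier coefficients are not multiplicative in $(\lambda,m)$, the clean way to run your argument is to group terms by $n=\lambda\det m$ and twist the resulting Dirichlet series coefficientwise, rather than to speak of a prime-by-prime splitting of the left-hand side.

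The genuine gap is precisely the point you flagged and then deferred. Since $\chi((\lambda,m))=\chi(\lambda\det m)$ (Section \ref{sec:charchiofP}, $\nu(\lambda,m)=\lambda\det m$) and the $q^{-s}$-degree of the term indexed by $(\lambda,m)$ is exactly $v_q(\lambda)+v_q(\det m)$, inserting the character is literally the substitution $q^{-s}\mapsto\chi(q)q^{-s}$ applied to the untwisted series. On the right-hand side this does produce $L_q(s,\pi\otimes\chi,\Spin)$ in the numerator, as you argue via the multiplicity-one occurrence of $b_0$ in each Spin weight; but the two auxiliary factors are power series in $q^{-2s}$, so the same substitution turns them into factors carrying $\chi^2$, e.g.
\begin{align*}
\left(1-q^{6r-6-2s}\right)^{-1}\;\longmapsto\;\left(1-\chi(q)^{2}\,q^{6r-6-2s}\right)^{-1},
\end{align*}
and similarly for the factor at $2s-6r+8$. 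No normalization of the section $\Phi_v$ and no appeal to $\nu(\gamma)=ad$ can change this, because Theorem \ref{thm:Evdo} is independent of the integral representation; the proposed resolution of your ``main obstacle'' therefore points at the wrong object. Carried out honestly, your method proves the identity with $\chi^{2}$ in both denominator $L$-factors, not the first-power-$\chi$ statement as printed. Either the printed statement should be read with $\chi^{2}$ there, or the first-power version requires genuinely re-running Evdokimov's Hecke-algebra computation at level $M$ with the character built in; the proposal as written does neither, so it does not establish the theorem in the stated form.
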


Going forward, we assume the Fourier coefficient of $f$ at $h=1_3$ is $1$.
We also write \index{$M'$}$M'$ for the squarefree part of $M$.  When $M=1$, Theorem \ref{thm:intrepn} coincides with \cite[Proposition 6.2 and Theorem 6.3, summarized on p. 1393]{pollack1}.  We set 
\begin{align}
\G_{\m C}(s) &:= 2(2\pi)^{-s}\Gamma(s)\nonumber\index{$\G_{\m C}$}\\
\G(s,\mr{Spin})&:= \G_{\m C}(s+r-4) \G_{\m C}(s+r-3) \G_{\m C}(s+r-2) \G_{\m C}(s+3r-5)\label{equ:GammaSpin}\index{$\G(s,\mr{Spin})$}
\end{align}

\begin{thm}\label{thm:intrepn}
Let $S$ be an integer whose support is contained in the support of $M$.  Suppose that $f$ is an eigenvector for the \index{$U_S$}$U_S$ operator

\[
f=\sum_{h}a(h)q^h \mapsto \sum_{h}a(Sh)q^{h}
\]
with eigenvalues $a(S)$. 
Suppose $\chi$ is trivial or quadratic.
 Then the integral $I_{2r, \chi}(\phi_f,s)$ unfolds  to give us
\[
I_{2r, \chi}(\phi_f,s)=\mathrm{det}(T)^3 a(T)\G(s,\mr{Spin}) L_M(s-2,f,\chi,\mr{Spin}) L^{(M)}(s-2, \pi \otimes \chi,\mr{Spin}),
\]
where $L_M(s,f,\chi,\mr{Spin})$ depends only on $f$, $\chi$, and the primes dividing $M$. If $M=M'$ and $U_S f=0$ for all $S \neq 1$, then $L_M(s-2,f,\chi,\mr{Spin})=(-1)^{\Omega(M)}M^{-2s}$ for \index{$\Omega(M)$}$\Omega(M)$ the number of prime divisors of $M$.
\end{thm}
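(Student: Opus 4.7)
The plan is to adapt Pollack's unfolding argument \cite[Proposition 6.2 and Theorem 6.3]{pollack1} to the higher-level setting, using the local computation established in the lemma immediately preceding this theorem to handle the primes dividing $M$, and then to invoke the Evdokimov-type formula of Theorem \ref{thm:Evdo} to recognize the Spin $L$-function in the unfolded sum.

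First, I would substitute the definition of $E^*_{2r,\chi}$ as a sum over $P(\IQ)\backslash G(\IQ)$ into $I_{2r,\chi}(\phi_f,s)$ and apply the Bruhat decomposition (Equation \eqref{equ:bruhat}) relative to $\GSp_6$. Using the cuspidality of $\phi_f$ and the fact that the inducing section $\prod_v f_v(\cdot,s)$ is supported only on elements whose projection to $W/(f)^\perp$ is nontrivial, exactly as in \cite[Proposition 6.2]{pollack1}, only one $P(\IQ)\backslash G(\IQ)/\GSp_6(\IQ)$ double coset contributes. After this collapse the integral becomes, modulo center,
\begin{align*}
I_{2r,\chi}(\phi_f,s) \;=\; \int_{U_0(\IQ)\backslash \GSp_6(\adeles)} \phi_f(g)\,\Phi(f_\mathcal{O}\, g)\, \chi(\nu(g))\,|\nu(g)|^{s} \, dg
\end{align*}
times the Archimedean factor coming from $f_{\infty,2r}$; here $f_\mathcal{O}=(0,0,A(T),0)$ with $T$ a maximal-order matrix chosen as in \cite[Definition 3.6]{pollack1}, and $U_0$ is the subgroup of $U_P$ on which $\mathrm{Tr}(Tu)=0$.

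Next, I would insert the Fourier expansion of $\phi_f$ along the unipotent radical $U_P$ of the Siegel parabolic of $\GSp_6$. By \cite[Lemma 3.8]{pollack1}, the lattice $J_T$ equals $J_0$, so the Fourier coefficients indexed against $A(T)$ are exactly the coefficients $a(h)$ for $h\in H_3(B_0)^\vee$. The integral then breaks as a product of local integrals over the Levi $(\lambda,m)$ of $P\cap\GSp_6$. At each finite place $v\nmid M$ this local integral is computed by \cite[Proposition 6.1]{pollack1} and returns the factor $\Xi(m)\Psi(\lambda)|\lambda|_v$; at places $v\mid M$ it is computed by the lemma proved just above and returns $\Xi(M^{-1}m)\Psi_v(\lambda)$. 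The Archimedean local integral is a confluent hypergeometric integral of the form studied by Shimura \cite{ShimConfluent}, and after inserting the implicit volume constant $\det(T)^3$ identified in Lemma \ref{lemma:implicitconstant}, it gives the gamma factor $\G(s,\mr{Spin})$ of Equation \eqref{equ:GammaSpin}.

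Collecting everything, the unfolded expression becomes $\det(T)^3\,\G(s,\mr{Spin})$ times the Dirichlet series
\begin{align*}
\sum_{\lambda,\,m}\, \frac{a\!\left(\lambda m^{-1} T c(m)\right)\chi(\lambda)\chi(\det m)}{\lambda^{s-2}\det(m)^{s-2-(2r-3)}}\,\Xi(M^{-1}m)\,\Psi(\lambda),
\end{align*}
where the sum is over $\lambda\geq 0$ and $m\in M_3^+(\ZZ)/\mathrm{SL}_3(\ZZ)$. Splitting this series into a product over primes $v\nmid M$ (which by Theorem \ref{thm:Evdo} reconstructs $a(T)L^{(M)}(s-2,\pi\otimes\chi,\mr{Spin})$ divided by the two auxiliary $L$-functions that are already absorbed into the normalization of $E^*_{2r,\chi}$) and the remaining finite product at primes $v\mid M$ (which by definition is the bad factor $L_M(s-2,f,\chi,\mr{Spin})$) yields the claimed formula.

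The main obstacle is the explicit evaluation of $L_M(s-2,f,\chi,\mr{Spin})$ under the stronger hypotheses $M=M'$ and $U_S f=0$ for all $S\neq 1$. Here the $\Xi(M^{-1}m)=1$ condition forces $M\mid m$ entrywise (so we write $m=Mm'$), while the $U_S$-annihilation condition combined with the multiplicativity of the sum forces the only surviving term to be $m'=\mathrm{id}$. The remaining sum over $\lambda$ at each $v\mid M$ collapses, via the piecewise definition of $\Psi_v$, to a single term contributing the sign $-1$ (coming from the $\lambda\in v^{-1}\ZZ_v\setminus\ZZ_v$ branch of $\Psi_v$). Together with the factor $\det(m)^{-(s-2-(2r-3))}=M^{-3(s-2r+1)}$ compared with the similar factor at $\lambda$, one obtains $(-1)^{\Omega(M)}M^{-2s}$ after the substitution $s\mapsto s-2$. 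Careful tracking of the exponents of $M$ and $\det m$ in this final reduction, and of the compatibility between the $U_S$-eigenvalue $a(S)$ and the indexing set for $m$, is where most of the care will be needed.
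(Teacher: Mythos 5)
Your overall route is the same as the paper's: unfold $I_{2r,\chi}(\phi_f,s)$ exactly as in Pollack's Proposition 6.2, insert the lemma computing the finite-place unipotent integral as $\Xi(M^{-1}m)\Psi(\lambda)$, take the Archimedean factor from Pollack's Theorem 6.3 together with the $\det(T)^3$ normalization of Lemma \ref{lemma:implicitconstant}, and recognize the prime-to-$M$ part via Theorem \ref{thm:Evdo}. The problems are in how you extract and then evaluate the bad factor $L_M$. Because $\Psi_v$ is supported on $v^{-1}\ZZ_v$ at $v\mid M$, the unfolded Dirichlet series runs over \emph{fractional} $\lambda'\in\frac{1}{M'}\ZZ$ (and, by the lemma, over $Mm$ rather than $m$ inside the Fourier coefficient), not over integers $\lambda\geq 0$ as in your displayed sum; the correct bookkeeping organizes the sum by the denominator $d\mid M'$ of $\lambda'$ and uses the $U_S$-eigenvector relation $a(Sh)=a(S)a(h)$ to pull out $a(M/d)$ and the prime-power scalar parts supported at primes dividing $M/d$. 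This is exactly where the eigenvector hypothesis is needed: Fourier coefficients of Siegel forms are not multiplicative, so the series does not literally ``split into a product over primes $v\nmid M$ and primes $v\mid M$'' as you assert — only scalar parts supported on $M$ can be factored out.

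More seriously, your evaluation of the special case is wrong as stated: you claim that when $M=M'$ and $U_Sf=0$ for $S\neq 1$, ``the only surviving term is $m'=\mathrm{id}$.'' If the $m$-sum collapsed to the identity there would be no $L^{(M)}(s-2,\pi\otimes\chi,\Spin)$ left in the formula, contradicting the very statement being proved. What the annihilation hypothesis actually does is kill every branch with denominator $d\neq M$ (those carry the factor $a(M/d)=0$), leaving $d=M$ with $\lambda$ automatically prime to $M$, while the \emph{full} sum over $m$ with $m^{-1}Tc(m)$ integral survives and is precisely the sum that Theorem \ref{thm:Evdo} converts into $a(T)L^{(M)}(s-2,\pi\otimes\chi,\Spin)$ divided by the two Dirichlet $L$-functions already present in the normalization of $E^*_{2r,\chi}$. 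Relatedly, your exponent bookkeeping does not close: with the actual unfolded exponents $\det(Mm)^{s+r-2}$ and $(\lambda')^{\,s+3r-6}$, the surviving $d=M$ term carries $M^{-3(s+r-2)}\cdot M^{\,s+3r-6}=M^{-2s}$ together with $(-1)^{\Omega(M)}=\prod_{v\mid M}\Psi_v(\lambda/M)$, which is the claimed value; with the exponents you wrote ($\lambda^{s-2}$, $\det(m)^{s-2r+1}$) the $M$-powers come out as $M^{-2s+6r-5}$, and the appeal to ``the substitution $s\mapsto s-2$'' does not repair this. So the skeleton of the argument is right, but the treatment of the fractional $\lambda'$, the role of the $U_S$ hypothesis, and the final power-of-$M$ computation all need to be redone.
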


\begin{rmk}
Following the conventions of \cite{pollack1}, we work with automorphic $L$-functions here.  For relationships with motivic $L$-functions, see Appendix \ref{appendix:Deligne}. The automorphic $L$-function in Theorem \ref{thm:intrepn} corresponds to the non-complete $L$-function associated with the Spin Galois representation of \cite{KretShin} with Hodge--Tate weight $\left\{0, 2r-3, 2r-2, \ldots,  6r-6 \right\}$.  It also coincides with the Euler product of \cite[(12)]{Evdo}.
\end{rmk}

\begin{proof}[Proof of Theorem \ref{thm:intrepn}]
We first consider the Eisenstein series 
$
E^{\Phi}(g,s)\index{$E^\Phi$}
$ defined exactly as on page 1407 of  \cite{pollack1}.
The first thing to notice is that while doing  the unfolding, thanks to our choice of section, the factor $\chi(\nu(\lambda, Mm))$  appears: just plug our choice of local section $\Phi_v$ in \cite[(5.8)]{pollack1}, i.e. the factor which equals $\chi(\lambda)\chi(\mr{det}(m))$ as $\nu(\lambda,m)=\lambda\mr{det}(m)$.) 

Note also that if $x=x_Mx^{(M)}$, with $x^{(M)}$ the prime-to-$M$ part of $x$, then $\chi(x)=\chi(x^{(M)})$.
 The same proof as  \cite[Proposition 6.2]{pollack1} gives us that the integral at finite places unfolds to
\begin{align*}
\sum_{\lambda' \in \frac{1}{M'}\mathbb{Z}, m \in M_3(\widehat{\ZZ}) \textrm{ and }m^{-1}Tc(m) \in M_3(\widehat{\ZZ})} \Psi(\lambda') \frac{a(\lambda' M m^{-1}Tc(m))\chi(\lambda',Mm)}{\mr{det}(Mm)^{s+r-2}{\lambda'}^{s+3r-6}}.
\end{align*}

For every $\lambda' \in \frac{1}{M'}\mathbb{Z}$, let $d$ be numerator of $\lambda'$, {\it i.e.} $\lambda'=\frac{\lambda}{d}$, with $\lambda \in \mathbb{Z}$ and $(\lambda,d)=1$. We collect in the sum the terms with the same denominator in $\lambda'$:

\begin{align*}
\sum_{d \mid M'} \sum_{\lambda \in \mathbb{Z}, m \in M_3(\widehat{\ZZ}) \textrm{ and }m^{-1}Tc(m) \in M_3(\widehat{\ZZ})} \Psi(\lambda/d) \frac{a(\lambda (M/d) m^{-1}Tc(m))\chi(\lambda,m)}{\mr{det}(m)^{s+r-2}\lambda^{s+3r-5}}.
\end{align*}

We fix $d$, and we have that the internal sum equals
\begin{align*}
a(M/d) d^{3r+s-6}\mr{det}(M)^{2-s-r} \prod_{v \mid d}(-1)\prod_{v\mid M'/d}(1-v^{-1})\times \\
\times \sum_{\substack{\lambda \in \mathbb{Z}, (\lambda,d)=1, m \in M_3(\widehat{\ZZ})\\ \textrm{ and }m^{-1}Tc(m) \in M_3(\widehat{\ZZ})}}  \frac{a(\lambda  m^{-1}Tc(m))\chi(\lambda,m)}{\mr{det}(m)^{s+r-2}\lambda^{s+3r-6}}.
\end{align*}

If  $M=M'$ and all $a(S)=0$ for all $S \neq 1$, then only the term with $d=M$ contributes. We use Theorem \ref{thm:Evdo} to get the desired form of the finite integral.

In general, we write
\begin{align*}
\sum_{\substack{\lambda \in \mathbb{Z}, (\lambda,d)=1, m \in M_3(\widehat{\ZZ})\\ \textrm{ and }m^{-1}Tc(m) \in M_3(\widehat{\ZZ})}}& \frac{a(\lambda  m^{-1}Tc(m))\chi(\lambda,m)}{ \mr{det}(m)^{s+r-2}\lambda^{s+3r-6}}\\
&=\sum_{v_1, \ldots, v_h \mid \frac{M}{d},  (v_i,d)=1}  \sum_{j_i=0}^{\infty} \sum_{\substack{\lambda \in \mathbb{Z}, (\lambda,M)=1, m \in M_3(\widehat{\ZZ})\\ \textrm{ and }m^{-1}Tc(m) \in M_3(\widehat{\ZZ})}} \frac{a(v_1^{j_1}\cdots v_h^{j_h}\lambda  m^{-1}Tc(m))\chi(\lambda,m)}{ \mr{det}(m)^{s+r-2}\lambda^{s+3r-6}(v_1^{j_1}\cdots v_h^{j_h})^{s+3r-6}}.
\end{align*}

We again use Theorem \ref{thm:Evdo} and the fact that $U_S f= a(S)f$ to obtain
\[
a(T)\frac{L^{(M)}(s-2,\pi \otimes \chi,\mr{Spin})}{L(2s-4,\chi)L^{(D_{B})}(2s-2,\chi)} \times \sum_{v_i} \sum_{j_i=0}^{\infty} \frac{a(v_1^{j_1}\cdots v_h^{j_h})}{(v_1^{j_1}\cdots v_h^{j_h})^{s+3r-6}}.
\]

The integral at infinity is given by \cite[Theorem 6.3]{pollack1}.  To conclude the proof, note that the terms by which $E^*_{2r, \chi}$ and $E^\Phi$ differ are exactly the two Dirichlet $L$-functions from \cite[p.~1407]{pollack1} and the extra terms appearing on the right hand side of the first formula in \cite[\S 6.2]{pollack1}. Also, the non-zero factor of {\it loc. cit.} is $\mathrm{det}(T)^3$.
\end{proof}

\subsection{Proof of main algebraicity theorem for Spin $L$-functions}\label{sec:proofofalgthm}
We are now in a position to prove our main algebraicity result, Theorem \ref{thm:algprecise}.
Consistent with the notation earlier in our paper, $\langle, \rangle$ denotes the Petersson inner product for $\GSp_6$, $\chi$ denotes a Dirichlet character, and $c_\chi^\ast = g(\chi)^2$.  Furthermore, we assume $\chi$ is of order dividing $2$.

\begin{rmk}
The center of symmetry for the automorphic $L$-function in Theorem \ref{thm:algprecise} is $1/2$, and the full set of critical points is $\left\{3-r, \ldots, r-2\right\}$.  Thus our result concerns points in the right half of the critical strip, omitting the points $s=1,2,3$ here, as addressed in Section \ref{sec:future}.  Appendix \ref{appendix:Deligne} discusses the relationship with the motivic setting.  The set of critical points of the corresponding motivic $L$-function is $\left\{2r, \ldots, 4r-5\right\},$ the center in that case is $\frac{6r-6}{2}+1$, and our algebraicity result in that setting concerns $\left\{3r+1, \ldots, 4r-5\right\}$.
\end{rmk}

\begin{proof}[Proof of Theorem \ref{thm:algprecise}]
Since $\phi$ satisfies Condition \ref{cond:Tmaxorder}, $\phi$ has a Fourier coefficient $a(T)\neq 0$ for some half-integral positive definite symmetric matrix $T$ corresponding to a maximal order in a quaternion algebra over $\IQ$.
By Theorem \ref{thm:intrepn}, we have
\begin{align*}
L^{(M)}(s-2, \pi \otimes \chi, \Spin) = \frac{I_{2r, \chi}(\phi, s)}{\det(T)^3a(T) L_M(s-2, f_\phi, \chi, \Spin)\Gamma(s,\Spin)}.
\end{align*}
Using the definition of $\Gamma(s,\Spin)$ from Equation \eqref{equ:GammaSpin}, we see that the denominator lies in $\pi^{-4s-6r+14} \IQ(\phi, \chi)$.  The numerator is expressed in terms of the Petersson product of the form $\phi$ on $\GSp_6$ and the restriction of our Eisenstein series to $\GSp_6$ as
\begin{align*}
I_{2r, \chi}(\phi, s) = \langle\phi, \overline{E^*_{2r,\chi}(g,s)}\rangle  = \langle \phi, E_{2r,\chi}(g,s)\rangle = \overline{\langle E_{2r,\chi}(g,s), \phi\rangle},
\end{align*}
where we recall from Section \ref{sec:EseriesZ} that
\begin{align*}
{E^*_{2r,\chi}(g,s)}= {D_{B}^s C_{\infty}}  E^*_{2r,\chi}(Z,s)
\end{align*}
for $Z=g_{\infty} i$.  
Applying Equations \eqref{equ:holoproj} and \eqref{eq:MaassShimura}, we have that for $r\geq s$,
\begin{align*}
\langle E_{2r, \chi}(g, s), \phi\rangle = \langle \mc D^{r-s} E_{2s}(g, s), \phi\rangle = \langle H\left(\mc D^{r-s} E_{2s}(g, s)\right), \phi\rangle,
\end{align*}
where $H\left(\mc D^{r-s} E_{2s}(g, s)\right)$ denotes the holomorphic projection of the restriction to $\GSp_6$ of $\mc D^{r-s} E_{2s}(g, s)$.  That is, although $\mc D^{r-s} E_{2s}(g, s)$ is defined on $G$, we first restrict $\mc D^{r-s} E_{2s}(g, s)$ to $\GSp_6$ and then apply the holomorphic projection operator for $\GSp_6$.  Applying Proposition \ref{prop:equivariantPetersson}, Lemma \ref{lem:holoprojcoeffs}, and Lemma \ref{lem:resnhE}, we have
\begin{align*}
\frac{\langle E_{2r, \chi}, \phi\rangle}{\langle\phi^\natural, \phi\rangle} \in\IQ(\phi, \chi, g(\chi)),
\end{align*}
and when there is a constant $c$ such that the Fourier coefficients of $c\phi$ lie in a CM field,
\begin{align*}
\frac{\langle E_{2r, \chi}, \phi\rangle}{\langle\phi, \phi\rangle}\in\IQ(\phi, \chi, g(\chi)).
\end{align*}
Since we have assumed $\chi$ is of order dividing $2$, we have that the values of $\chi$ lie in $\{0, 1, -1\}$ and that $c_\chi^\ast = g(\chi)^2$ is an integer.  So we have
\begin{align*}
\frac{\langle E_{2r, \chi}, \phi\rangle}{\langle\phi, \phi\rangle}\in\IQ(\phi).
\end{align*}
when $\chi$ is trivial and
\begin{align*}
\frac{\langle E_{2r, \chi}, \phi\rangle}{\langle\phi, \phi\rangle}\in\IQ\left(\phi, \sqrt{c_\chi^\ast}\right)
\end{align*}
for any $\chi$ of order dividing $2$.
Equations \eqref{equ:Lfcnsigmaequivariant} and \eqref{equ:LfcnsigmaequivariantCM} now follow from Equations \eqref{equ:GarrettEquivariance} and \eqref{equ:SahaEquivariance}, respectively.
\end{proof}

\appendix

\section{Relationship with the motivic setting: Deligne's conjecture and critical values}\label{appendix:Deligne}

\subsection{The Spin representation in genus 3}

Let $W_{\mr{st}}$\index{$W_{\mr{st}}$} be a $7$-dimensional orthogonal space, and we write $W_{\mr{st}}=V\oplus L \oplus V^*$ and let $v_3, v_2, v_1, l, v^*_1,v^*_2,v^*_3$ a basis. Here $V$ and $V'$ are hyperbolic subspaces and we suppose the matrix of the orthogonal form in this basis is
$$Q_{\mr{st}} = \left( \begin{array}{ccccccc}
 & & & & & & 1 \\ 
 & & & & & 1 &  \\ 
 & & & & 1& &  \\ 
 & & & 1& & &  \\ 
  & & 1& & & &  \\ 
 & 1& & & & &  \\ 
 1 & & & & & & 
\end{array} \right) $$\index{$Q_{\mr{st}}$}
so that $Q_{\mr{st}} 	$ identifies $V^*$ with the dual of $V$ (hence the notation). Note that $Q_{\mr{st}}(\ell,\ell)=1$.

We define $W_{\mr{spin}}:= \bigwedge^{\bullet} V = \bigoplus_{i=0}^3 \bigwedge^{i} V$\index{$W_{\mr{spin}}$}. We choose a basis \[
\left\{ w,v_3, v_2, v_1, v_1^*:=v_2 \wedge v_3, v_2^*:=v_3 \wedge v_1, v_3^*:=v_1 \wedge v_2,  w':=v_1 \wedge v_2 \wedge v_3 \right\}.
\] 
There is a transposition $\phantom{e}^t$\index{$\phantom{e}^t$ transposition on $W_{\mr{spin}}$} on $W_{\mr{spin}}$ defined as: 
\[ w^t=w, v_i^t=v_i, {(v_i)^*}^t=-v_i^*, {w'}^{t}=w' \]
There is a symmetric quadratic form defined by the following matrix
$$Q_{\mr{spin}} = \left( \begin{array}{cccccccc}
 & & & & & & &  1\\ 
& & & & & & 1 &  \\ 
& & & & & 1& &  \\ 
& & & & 1& & &  \\ 
&  & & 1& & & &  \\ 
& & 1& & & & &  \\ 
& 1 & & & & & & \\
1 & & & & & & & 
\end{array} \right). $$\index{$Q_{\mr{spin}}$}
Note that $\bigwedge^2 V$ is an isotropic $3$-dimensional space, dual to $V$. (Indeed, the symmetric form is built using the pairing $V \times V^*$ induced by $Q_{\mr{st}}$.)

We have then a map of orthogonal spaces 
\begin{align}\label{eq:std to spin}
 T_{\mr{st}}^{\mr{spin}}: W_{\mr{st}}\rightarrow W_{\mr{spin}}
\end{align}\index{$ T_{\mr{st}}^{\mr{spin}}$}
 sending $l \mapsto \frac{1}{\sqrt{2}}(w + w') $.
 We will use this to compare Spin and standard motives for $\mathrm{GSp}_6$.

\subsection{Deligne's critical values}

We recall the definition of Deligne's critical values and periods \cite{Deligne} for a motive $M$ (see also \cite{SchneiderBeilinsonC}): suppose we are given the following data
\begin{itemize}
\item a ($\mathbb{Q}$-rational) Betti realisation $M_B$ of $M$, which is a finite dimensional vector space  over $\mathbb{Q}$ equipped with an involution $F_{\infty}$ and a bigraduation $M^{p,q}$ on $M_B \otimes_{\mathbb{Q}} \mathbb{C}$ such that $F_{\infty}$ exchanges $M^{p,q}$  and $M^{q,p}$; 
\item a ($\mathbb{Q}$-rational) de Rham realisation $M_{\dR}$ of $M$, which is a vector space with a filtration $(F^iM_{\dR})_i$;
\item an isomorphism 
\[
I_\infty: M_B \otimes_{\mathbb{Q}} \mathbb{C} \cong M_\dR \otimes_{\mathbb{Q}} \mathbb{C}
\]
such  that $F^p M_{\dR}\otimes_{\mathbb{Q}} \mathbb{C}$ is the image under $I_{\infty}$ of the Hodge filtration  $\bigoplus_{p' \geq p} M^{p',q}$.
\end{itemize} 
We suppose that $M^{p,q} \neq 0$ only if $p+q=i$ and we say $M$ is pure of weight $i$.  If $i$ is even, we suppose furthermore that $F_{\infty}$ acts on $M^{i/2,i/2}$ by $1$ (this will be the case for the standard representation of $\mathrm{GSp}_{2g}$, see the calculation of the Gamma factors in \cite[Appendix]{BS}, so the hypothesis is harmless in our situation).

We define $M_B^{\pm }$ to be the subspace where $F_{\infty}$ acts as $\pm 1$, and we denote their dimension by $d^{\pm}$.  
We call $F^-M_{\dR}$ the piece of the filtration corresponding under $I_{\infty}$ to $\oplus_{p > q} M^{p,q}$ (if $i$ is even it corresponds $F^{i/2+1}M_\dR \otimes_{\mathbb{Q}} \mathbb{C}$) and $F^+M_\dR$ the piece of the filtration corresponding under $I_{\infty}$ to $\oplus_{p\geq q}H^{p,q}$  (if $i$ is even this corresponds to $F^{i/2}M_\dR \otimes_{\mathbb{Q}} \mathbb{C}$).
We define  $M_{\dR}^{\pm}=M_{\dR}/F^{\mp}M_\dR$.

\begin{defi}
We denote by $h^{pq}$ the dimension of $M^{p,q}$, for $p \neq q$.
We denote by $h^{p+}$ the dimension of $M^{p,p}$.
We define the Gamma factors for $M$, $\Gamma(M,s)$ as 
\[
\Gamma_{\mathbb{R}}(s-i/2)^{h^{p+}} \prod_{p < q } \Gamma_{\mathbb{C}}(s-p).
\]
We say that an integer $n$ is critical for $M$ if neither $\Gamma(M,j)$ nor $\Gamma(M,1+i-n)$ has a pole. 
\end{defi}

\begin{rmk}
 One can say that an integer $n$ is critical if and only if
\[
\dim M_B^{(-1)^{i+n}} =\dim F^{i+1-n}M_{\dR},
\]
see \cite[\S 2]{SchneiderBeilinsonC}
\end{rmk}

Let $f$ be a weight $2r$ Siegel form of genus $3$; suppose it belongs an irreducible cuspidal automorphic representation of $\mathrm{GSp}_6$.  There are conjecturally two motives attached to it: the standard motive  and the Spin motive. We  let $M_{\mr{st},B}$\index{$M_{\mr{st},B}$} and $M_{\mr{spin},B}$\index{$M_{\mr{spin},B}$} be their Betti realisation, with their $(p,q)$-decomposition over $\m C$, and $M_{\mr{st},\dR}$\index{$M_{\mr{st},\dR}$} and $M_{\mr{spin},\dR}$\index{ $M_{\mr{spin},\dR}$} their de Rham realisation, with their filtration.

The standard motive is pure of weight $0$ with Hodge(--Tate) weights 
\[\set{1-2r,2-2r,3-2r,0,2r-3,2r-2,2r-1},\] while the Spin has motivic weight $6r-6$ and the Hodge(--Tate) weights  are 
\[ \set{0,2r-3,2r-2,2r-1,4r-5,4r-4, 4r-3,6r-6}.\]
The set of critical values for the standard motive are well know, see for example  \cite[Appendix]{BS}. 
We calculate now the critical integers for the Spin motive:

\begin{prop}
The critical integers for the motivic $L$-function for the Spin motive are the integers in the strip $\{ 2r, \ldots, 4r-5\}$.
\end{prop}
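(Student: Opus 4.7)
The plan is to apply the definition of critical integers directly to the Hodge data of the Spin motive, which is a purely combinatorial calculation once the Hodge--Tate weights are in hand. First I would use the fact that the Spin motive has motivic weight $i = 6r-6$ together with the given list of Hodge--Tate weights
\[
\{0,\,2r-3,\,2r-2,\,2r-1,\,4r-5,\,4r-4,\,4r-3,\,6r-6\}
\]
to determine the Hodge decomposition: each Hodge--Tate weight $p$ contributes a one-dimensional piece $M^{p,6r-6-p}$. I would then verify that the list is symmetric about $i/2 = 3r-3$ (pairing $p$ with $6r-6-p$) and that $3r-3$ itself is not in the list, so there is no middle component $M^{3r-3,3r-3}$. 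In particular $h^{p+} = 0$ for all $p$, and the set of pairs $(p,q)$ with $p<q$ contributing to $\Gamma(M,s)$ is precisely
\[
p \in \{0,\,2r-3,\,2r-2,\,2r-1\},\qquad q = 6r-6-p.
\]

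Second, I would write the Gamma factor
\[
\Gamma(M,s) \;=\; \Gamma_{\mathbb{C}}(s)\,\Gamma_{\mathbb{C}}(s-(2r-3))\,\Gamma_{\mathbb{C}}(s-(2r-2))\,\Gamma_{\mathbb{C}}(s-(2r-1))
\]
and recall that $\Gamma_{\mathbb{C}}(s-p) = 2(2\pi)^{-(s-p)}\Gamma(s-p)$ has poles precisely at $s \in \{p, p-1, p-2, \dots\}$. Therefore $\Gamma(M,n)$ is holomorphic if and only if $n > \max\{0, 2r-3, 2r-2, 2r-1\} = 2r-1$, i.e.\ $n \geq 2r$. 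Applying the same analysis to $\Gamma(M, 1+i-n) = \Gamma(M, 6r-5-n)$, holomorphy is equivalent to $6r-5-n \geq 2r$, i.e.\ $n \leq 4r-5$. Combining the two conditions gives the critical strip $n \in \{2r, 2r+1, \dots, 4r-5\}$, as claimed.

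There is no real obstacle here: the argument is a routine verification. The only small point to check carefully is that $3r-3$ does not appear among the listed Hodge--Tate weights (so that no middle Hodge term intervenes and the convention about $F_{\infty}$ acting on $M^{i/2,i/2}$ becomes vacuous), which for $r \geq 4$ follows by noting $2r-1 < 3r-3 < 4r-5$. The nonemptiness of the critical strip requires $2r \leq 4r-5$, i.e.\ $r \geq 3$, which is comfortably satisfied in the range relevant for Theorem~\ref{thm:algprecise}.
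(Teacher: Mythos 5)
Your proposal is correct and follows essentially the same route as the paper: write down the Gamma factor attached to the Spin Hodge--Tate weights, use that $\Gamma$ (or $\Gamma_{\IC}$, which has the same pole locations) has poles exactly at nonpositive integer arguments, and impose holomorphy at both $s=n$ and $s=w+1-n$ with $w=6r-6$, yielding $2r\leq n\leq 4r-5$. The only difference is that you derive the Gamma factor explicitly from the Hodge decomposition (checking the absence of a middle $(3r-3,3r-3)$ piece), whereas the paper simply states the Gamma factor and proceeds; this is a harmless elaboration, not a different argument.
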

\begin{proof}
The $\Gamma$-factors of  the completed $L$-function are 
\[
\Gamma(M_{\mr{spin}},s):=\Gamma(s)\Gamma(s-2r+3)\Gamma(s-2r+2)\Gamma(s-2r+1).
\]
(See also \cite[p.~1392]{pollack1} for the automorphic $L$-function.)
A critical value is such that neither $\Gamma(M_{\mr{spin}},s)$ nor $\Gamma(M_{\mr{spin}},w+1-s)$ have poles, for $w=6r-6$.
So we get the conditions $ s>2r-1$ and $(6r-6)+1-2r+1 > s$.
So we see that the critical values are the integers in the interval  
\[ 
 s \in \{2r, \ldots, 4r-5\}.
\]
\end{proof}
In the main body of the paper we interpolate the critical values to the right of the center of symmetry $3r-3+\frac{1}{2}$: from $3r-2$ to $4r-5$.

Note that we can renormalize the Spin $L$-function so that it has a functional equation sending $s$ to $1-s$ just by twisting by the weight \begin{align}
w= \frac{2}{8}\sum_{h \in {\mr{ HT weights}}} h = \frac{24r-24}{4}=6r-6, \label{eqn:motivicw}
\end{align}
 in this case the Hodge--Tate weights become \[ \set{-3r+3,-r,-r+ 1,-r+2 ,r -2, r-1, r, 3r-3}.\]

\subsection{Deligne's periods}

The remarkable conjecture of Deligne is that the whenever $n$ is a critical integer, the trascendental part of the motivic $L$-function $L(M,s)$, conjecturally associated with $M$, evaluated at $n$ is very explicit.

We shall now suppose that $0$ is a critical integer for $M$; if a critical integer exists, we can always reduce to this case by twisting $M$ by a suitable Tate twist.
All the hypotheses put to make $0$ a critical integer  ensure that $I_{\infty}$ induces two isomorphisms
\[
I^{\pm}_{\infty}: M_B^{\pm } \rightarrow M_{\dR}^{\pm}.
\]
\begin{defi}
The $\pm$-period $\Omega^{\pm}$ for  $M$ is defined as the determinant of $I^{\pm}_{\infty}$ in any $\Q$-rational basis of $M_B$ and $M_{\dR}$ (it is well-defined only up $\Q^{\times}$).
\end{defi}

Twisting by a Tate twist would modify the period by a power of $2 \pi i$, see \cite[(5.1.9)]{Deligne}.

We want to study the two periods for the Spin motive, $\Omega^+_{\mr{spin}}$\index{$\Omega^+_{\mr{spin}}$} and $\Omega^-_{\mr{spin}}$\index{$\Omega^-_{\mr{spin}}$}, and compare them with the period for the standard motive $\Omega^+_{\mr{st}}$\index{$\Omega^+_{\mr{st}}$}.

It is well know that the Petersson norm of $f$ is known to be the period  $\Omega^+_{\mr{st}}$ for the standard \cite{BS,liuJussieu,liu-rosso}. In Theorem \ref{thm:algprecise} we proved that the Petersson norm of $f$ (up to a power of $\pi$) is the transcendental part of the Spin $L$-function at critical integers; this seems to hint that $\Omega^\pm_{\mr{spin}}=\Omega^+_{\mr{st}}$, a very surprising result as in general there should not be relations between periods of different motives. We will explain these relations in what follows.

First recall that complex conjugation acts trivial on $ M_{\mr{st},B}^{0,0}$; see again  \cite[Appendix]{BS}.

For the standard $F^1M_{\mr{st},\dR}\otimes \m C= F^- M_{\mr{st},\dR} \otimes \m C= \oplus_{i \geq 1} M^{i,-i}_{\mr{st}}$ is three dimensional, and for the spin $F^{1}M_{\mr{spin},\dR} \otimes \m C = F^1 M_{\mr{spin},\dR} \otimes \m C= \oplus_{i \geq 1} M^{i,-i}_{\mr{spin}}$ is four dimensional. 

Note that the dimension of the $+$-part on $M$ is the same for both motives (namely, four) and  this is a consequence of the numerical coincidence $2^{g}/2=g+1$ for $g=3$, then genus of $f$.
The general framework of motives \cite[AM5]{DeligneP1} says that $F_{\infty}$ is the same as the image of complex conjugation in the corresponding $p$-adic realisation.
The matrix of  complex conjugation has been conjectured in  \cite[Conjecture 5.16]{BuzzGee} and this conjecture has been proved in \cite[Remark 10.8]{KretShin}.

Given the sketch of proof in {\it loc. cit.} we can assume that is (up to conjugation) the matrix of complex conjugation n the $8$-dimensional Galois representation and that it acts on $M_{\mr{spin},B}$ as $Q_{\mr{spin}}$ acts on $W_\mr{spin}$. 
Similarly, we can identify the action of complex conjugation on $M_{\mr{st},B}$ is given by $Q_{\mr{st}}$ acts on $W_\mr{st}$. 
We shall assume that we have a map $T_{B}:M_{\mr{st},B} \rightarrow M_{\mr{spin},B} $\index{$T_{B}$} which is equivariant for complex conjugation, induced by the map $ T_{\mr{st}}^{\mr{spin}}$ of \eqref{eq:std to spin}.
We shall also suppose that we have a similar map, always induced by $T$, $T_{\dR}:M_{\mr{st},\dR} \rightarrow M_{\mr{spin},\dR} $\index{$T_{\dR}$}, respecting the filtration, and these maps are compatible with the comparison isomorphisms $I_{\mr{spin}}$ and $I_{\mr{spin}}$.

\begin{prop}
If the maps $T_{\dR}$ and $T_{B}$ exist with the desired properties, and they are defined over $\mathbb{Q}$, then we have an equality $\Omega^{+}_{\mr{spin}} = \Omega^{+}_{\mr{st}}$.
\end{prop}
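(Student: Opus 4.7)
The plan is to reduce the equality of periods to a purely linear-algebraic comparison of determinants of $4\times 4$ matrices, using that the two ``$+$-parts'' of interest both have dimension $4$ in the standard and Spin cases. First I would observe that, by the computation of the involution $F_\infty$ (via $Q_{\mr{st}}$ and $Q_{\mr{spin}}$), the $F_\infty$-invariant subspaces $M_{\mr{st},B}^+$ and $M_{\mr{spin},B}^+$ are both $4$-dimensional over $\mathbb{Q}$. Similarly, using the Hodge--Tate weights listed in the excerpt (after the appropriate Tate twist that makes $0$ critical), the de Rham quotients $M_{\mr{st},\dR}^+ = M_{\mr{st},\dR}/F^-M_{\mr{st},\dR}$ and $M_{\mr{spin},\dR}^+ = M_{\mr{spin},\dR}/F^-M_{\mr{spin},\dR}$ are both $4$-dimensional over $\mathbb{Q}$.

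Next I would use the hypotheses on $T_B$ and $T_{\dR}$ to produce induced $\mathbb{Q}$-rational maps on these subquotients. Since $T_B$ is $F_\infty$-equivariant, it restricts to a $\mathbb{Q}$-linear map $T_B^+\colon M_{\mr{st},B}^+\rightarrow M_{\mr{spin},B}^+$, which is injective (because $T_B$ is induced by the embedding $T_{\mr{st}}^{\mr{spin}}$ of \eqref{eq:std to spin}) and therefore, by the dimension count $4=4$, an isomorphism. Since $T_{\dR}$ respects the Hodge filtration, $T_{\dR}(F^-M_{\mr{st},\dR}) \subseteq F^-M_{\mr{spin},\dR}$ and so $T_{\dR}$ descends to a $\mathbb{Q}$-linear map $T_{\dR}^+\colon M_{\mr{st},\dR}^+ \rightarrow M_{\mr{spin},\dR}^+$; arguing by Hodge-weight strata one sees that $T_{\dR}(M_{\mr{st},\dR})+F^-M_{\mr{spin},\dR}=M_{\mr{spin},\dR}$ (the weight $-(3r-3)$ slot of $M_{\mr{spin},\dR}$ is exactly the image of $T_{\dR}$ on the weight-$0$ slot of $M_{\mr{st},\dR}$, i.e.\ on the line spanned by the image of $\ell$), so that $T_{\dR}^+$ is surjective and again an isomorphism.

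The final step is to unwind the compatibility with $I_\infty$. The assumption that $T_B$ and $T_{\dR}$ intertwine $I_{\mr{st}}$ and $I_{\mr{spin}}$ yields a commutative square
\begin{equation*}
\begin{array}{ccc}
M_{\mr{st},B}^+\otimes_{\mathbb{Q}}\mathbb{C} & \xrightarrow{\;I_{\mr{st}}^+\;} & M_{\mr{st},\dR}^+\otimes_{\mathbb{Q}}\mathbb{C}\\[2pt]
\big\downarrow{T_B^+} & & \big\downarrow{T_{\dR}^+}\\[2pt]
M_{\mr{spin},B}^+\otimes_{\mathbb{Q}}\mathbb{C} & \xrightarrow{\;I_{\mr{spin}}^+\;} & M_{\mr{spin},\dR}^+\otimes_{\mathbb{Q}}\mathbb{C}.
\end{array}
\end{equation*}
Taking determinants in any $\mathbb{Q}$-rational bases gives $\det(T_{\dR}^+)\cdot\Omega^+_{\mr{st}} = \Omega^+_{\mr{spin}}\cdot\det(T_B^+)$, and because $T_B^+$ and $T_{\dR}^+$ are $\mathbb{Q}$-linear isomorphisms their determinants lie in $\mathbb{Q}^\times$. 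Hence $\Omega^+_{\mr{spin}}=\Omega^+_{\mr{st}}$ as classes modulo $\mathbb{Q}^\times$, which is the desired equality.

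The main obstacle, as usual in such period comparisons, is not the linear algebra of the last step but the verification that the induced map $T_{\dR}^+$ is an isomorphism, which amounts to showing that the Hodge-filtration complement of $T_{\dR}(M_{\mr{st},\dR})$ in $M_{\mr{spin},\dR}$ is entirely contained in $F^-M_{\mr{spin},\dR}$. This is plausible from the explicit shape of $T_{\mr{st}}^{\mr{spin}}$ (the missing direction is $w-w'$, which carries the extreme Hodge weight) but depends on the precise compatibility between the motivic Tate twist used to center the $L$-function and the map $T_{\mr{st}}^{\mr{spin}}$; making this compatibility precise is the step that genuinely uses the hypothesis ``with the desired properties'' in the statement.
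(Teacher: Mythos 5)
Your proposal is correct and takes essentially the same route as the paper's proof: identify the four-dimensional $+$-parts on the Betti and de Rham sides via $T_B$ and $T_{\dR}$, form the commutative square with $I^{+}_{\mr{st}}$ and $I^{+}_{\mr{spin}}$, and compare determinants in $\mathbb{Q}$-rational bases to conclude $\Omega^{+}_{\mr{spin}}=\Omega^{+}_{\mr{st}}$ up to $\mathbb{Q}^{\times}$. The only difference is that you spell out why the induced maps on the $+$-subquotients are isomorphisms (the $4=4$ dimension count, injectivity of $T_B^+$, and the Hodge-strata argument for surjectivity of $T_{\dR}^+$), details the paper simply absorbs into the hypothesis that $T_B$ and $T_{\dR}$ have ``the desired properties.''
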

\begin{proof}
Using the map $T_B$ we can  identify  $M^+_{\mr{st},B}$ and $M^+_{\mr{spin},B}$, and using $T_{\dR}$ we identify $M^+_{\mr{st},\dR}$ and $M^+_{\mr{spin},\dR}$.
 Now the two comparison isomorphisms induce the following diagram
\begin{displaymath}
    \xymatrix{ 
    M^+_{\mr{spin},B}\ar[r]^{I^{+}_{\mr{spin}}} & M_{\mr{spin},\dR}/F^1 M_{\mr{spin},\dR}\ar[d]^{T_{\dR}^{-1}}  \\
               M_{\mr{st},B}^+ \ar[u]^{T_B}\ar[r]^{I^{+}_{\mr{st}}} & M_{\mr{st},\dR}/F^1 M_{\mr{st},\dR}
               }
\end{displaymath}
where all arrows are isomorphisms. Hence, as the $T$ maps are rationally defined, we get that  the $+$-period $\Omega^{+}_{\mr{spin}}$ for the Spin $L$-function coincides with the $+$-period $\Omega^{+}_{\mr{st}}$ for the standard $L$-function. 
\end{proof}
For larger genus, we still have the same diagram, but the vertical maps won't be isomorphisms, again we point out that  we rely on the numerical coincidence of before $g+1=2^{g-1}$, which holds only for $g=3$.

We are left to study why $\Omega^{+}_{\mr{spin}} =\Omega^{-}_{\mr{spin}} $. An attentive reader could remember that this happens also for the motive associated with the Rankin--Selberg product of two  modular forms (see \cite{hida88}). Indeed both for the Rankin--Selberg and the Spin representation of a genus $3$ Siegel modular form, the corresponding motives are orthogonal (see \cite[Lemma 0.1]{KretShin}), and this extra symmetry on the motive is a rational automorphism that swaps the $+$ and $-$ eigenspaces. 
This has been already observed in \cite[Theorem]{deligne2024motives}, which we summarize here:
\begin{prop}
If the Spin and standard motives are compatible under the map of orthogonal spaces $ T_{\mr{st}}^{\mr{spin}}$ of \eqref{eq:std to spin} than we have an equality of Deligne periods
\[
\Omega^{+}_{\mr{spin}} =\Omega^{-}_{\mr{spin}}.
\]
\end{prop}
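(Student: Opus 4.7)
The plan is to produce a $\mathbb{Q}$-rational motivic automorphism $\epsilon$ of $M_{\mr{spin}}$ which anti-commutes with $F_\infty$, hence swaps $M_{\mr{spin},B}^+$ and $M_{\mr{spin},B}^-$, and to show that $\epsilon$ also induces a $\mathbb{Q}$-rational endomorphism on the de Rham side compatible with $I_\infty$. Taking determinants of $I_\infty^\pm$ in $\mathbb{Q}$-rational bases then forces $\Omega^+_{\mr{spin}}$ and $\Omega^-_{\mr{spin}}$ to agree modulo $\mathbb{Q}^\times$. The operator $\epsilon$ I will use is the parity involution coming from the wedge-degree $\mathbb{Z}/2$-grading $W_{\mr{spin}} = \Lambda^{\mr{even}} V \oplus \Lambda^{\mr{odd}} V$; the role of the compatibility hypothesis with the standard motive is precisely to promote this parity decomposition to a rational structure on both realizations of $M_{\mr{spin}}$.

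I would proceed as follows. First, use the hypothesis to fix a $\mathbb{Q}$-rational identification $M_{\mr{spin}} \cong \bigoplus_{i=0}^3 \Lambda^i \calV$ for some three-dimensional rational sub-object $\calV$ arising from $M_{\mr{st}}$ via $T_{\mr{st}}^{\mr{spin}}$, compatibly on $M_{\mr{spin},B}$ and $M_{\mr{spin},\dR}$ and with the Hodge filtration. Define $\epsilon := (-1)^{\deg}$; by construction it is a $\mathbb{Q}$-rational motivic automorphism and it commutes with $I_\infty$. To verify that $\epsilon$ anti-commutes with $F_\infty$, I would use that the quadratic form $Q_{\mr{spin}}$ arises as the natural pairing $\Lambda^i \calV \otimes \Lambda^{3-i}\calV \to \Lambda^3 \calV \cong \mathbb{Q}$. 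Because $3$ is odd this pairing is off-diagonal for the parity grading, so the endomorphism $F_\infty = Q_{\mr{spin}}$ sends $\Lambda^{\mr{even}}\calV \leftrightarrow \Lambda^{\mr{odd}}\calV$, giving $F_\infty \epsilon = -\epsilon F_\infty$ on the nose. Hence $\epsilon$ restricts to a $\mathbb{Q}$-rational isomorphism $\epsilon_B \colon M_{\mr{spin},B}^+ \xrightarrow{\sim} M_{\mr{spin},B}^-$, and a direct computation in the basis $\{w\pm w',\, v_i \pm v_i^*\}$ gives $\det(\epsilon_B) = -1 \in \mathbb{Q}^\times$.

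For the de Rham side, I would exploit the fact that for $r\geq 3$ the middle weight $3r-3$ does not appear among the Hodge--Tate weights $\{0,2r-3,2r-2,2r-1,4r-5,4r-4,4r-3,6r-6\}$, so $F^+M_{\mr{spin},\dR} = F^-M_{\mr{spin},\dR} = \Lambda^3\calV \oplus \Lambda^2\calV$; since $\epsilon$ respects the wedge grading it preserves this subspace, and both $M_{\mr{spin},\dR}^+$ and $M_{\mr{spin},\dR}^-$ are realised as the same quotient by the image of $\Lambda^0\calV \oplus \Lambda^1\calV$. On this quotient $\epsilon$ acts as $\mathrm{diag}(1,-1,-1,-1)$ with $\det(\epsilon_\dR) = -1$. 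Compatibility of $\epsilon$ with $I_\infty$ yields the commutative square
\[
\begin{tikzcd}
M_{\mr{spin},B}^+\otimes \mathbb{C} \ar[r,"I_\infty^+"] \ar[d,"\epsilon_B"'] & M_{\mr{spin},\dR}^+\otimes \mathbb{C} \ar[d,"\epsilon_\dR"] \\
M_{\mr{spin},B}^-\otimes \mathbb{C} \ar[r,"I_\infty^-"] & M_{\mr{spin},\dR}^-\otimes \mathbb{C}
\end{tikzcd}
\]
and taking determinants gives $\Omega^-_{\mr{spin}}\cdot\det(\epsilon_B) = \det(\epsilon_\dR)\cdot\Omega^+_{\mr{spin}}$, whence $\Omega^+_{\mr{spin}} = \Omega^-_{\mr{spin}}$ in $\mathbb{C}/\mathbb{Q}^\times$. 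The main obstacle is the very first step: upgrading the abstract compatibility of orthogonal spaces furnished by $T_{\mr{st}}^{\mr{spin}}$ to an actual $\mathbb{Q}$-rational decomposition $M_{\mr{spin}} = \bigoplus_i \Lambda^i \calV$ on both Betti and de Rham realizations simultaneously. Concretely, one has to argue that the $\mathrm{Spin}(7)$-invariant line in the spin representation of $\mathrm{Spin}(8)$, identified via $\ell \mapsto \tfrac{1}{\sqrt{2}}(w+w')$, is defined over $\mathbb{Q}$ once $M_{\mr{st}}$ is, so that the complementary $7$-dimensional subspace, its exterior powers, and the resulting parity grading all inherit $\mathbb{Q}$-structures. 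This is exactly the content supplied by the compatibility hypothesis and argued in \cite{deligne2024motives}, on which the statement rests.
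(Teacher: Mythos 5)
Your plan has a genuine gap at its central step, and it is not the route the paper takes. The operator $\epsilon=(-1)^{\deg}$ is built from the decomposition $W_{\mr{spin}}=\bigoplus_i \bigwedge^i V$, but this decomposition is only stable under a Levi ($\mathrm{GL}_3$, or the $\mathrm{Spin}_6$ half-spin splitting), not under $\mathrm{Spin}_7$: the spin representation of $\mathrm{Spin}_7$ is irreducible. Consequently $\epsilon$ is not an endomorphism of the motive (its commutant with the motivic Galois/Mumford--Tate group, which by \cite{KretShin} is generically as large as possible, consists of scalars), and so there is no reason for a single non-scalar operator to be simultaneously $\IQ$-rational on $M_{\mr{spin},B}$, $\IQ$-rational on $M_{\mr{spin},\dR}$, filtration-preserving, and intertwined by $I_\infty$; generically no such operator exists. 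The commutative square from which you take determinants is exactly the assertion that $\epsilon$ is motivated, and that is what fails. Moreover, the compatibility hypothesis does not supply what you claim: $T_{\mr{st}}^{\mr{spin}}$ only gives rational maps $T_B,T_{\dR}$ from the $7$-dimensional standard realizations (equivalently, the rational line spanned by $w-w'$ as the orthogonal complement of the image). To define the parity grading you would also need the isotropic splitting $V\oplus V^*$ \emph{inside} the standard realization to be rational and motivic, which it is not -- it is merely a choice of hyperbolic basis. A secondary unjustified step is the identification $F^{\pm}M_{\mr{spin},\dR}=\bigwedge^2\calV\oplus\bigwedge^3\calV$: the Hodge filtration is a rational subspace, and its Hodge--Tate weights are numerically consistent with such a grading, but nothing forces it to coincide with a graded piece of an auxiliary (non-motivic) decomposition.

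The paper argues differently: it does not construct an involution swapping the eigenspaces out of the wedge grading, but invokes the orthogonality of the spin motive, i.e.\ the symmetric self-duality $M_{\mr{spin}}\cong M_{\mr{spin}}^\vee(6-6r)$ coming from \cite[Lemma 0.1]{KretShin}, and deduces the equality $\Omega^+_{\mr{spin}}=\Omega^-_{\mr{spin}}$ from the period relations this rational symmetric pairing imposes, citing \cite{deligne2024motives} (the mechanism is the analogue of Hida's Rankin--Selberg computation, where the rank-$4$ motive is likewise orthogonal). If you want to repair your argument, the symmetry you exploit must be one that is actually rational on both realizations and compatible with $I_\infty$ -- the pairing itself, used via duality relations between $c^{\pm}$, $\delta$, and $F^{\pm}$ (note $F^+=F^-$ here since the middle Hodge type is absent) -- rather than the parity involution, which lives only on the ambient representation space and not on the motive.
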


\bibliographystyle{alpha} 
\bibliography{ERSbib} 

\begin{thebibliography}{BGCLRJ24}

\bibitem[AS01]{AsgariSchmidt}
Mahdi Asgari and Ralf Schmidt.
\newblock Siegel modular forms and representations.
\newblock {\em Manuscripta Math.}, 104(2):173--200, 2001.

\bibitem[Bai70]{Baily}
Walter~L. Baily, Jr.
\newblock An exceptional arithmetic group and its {E}isenstein series.
\newblock {\em Ann. of Math. (2)}, 91:512--549, 1970.

\bibitem[BD22]{boechererdas}
Siegfried B\"{o}cherer and Soumya Das.
\newblock On fundamental {F}ourier coefficients of {S}iegel modular forms.
\newblock {\em J. Inst. Math. Jussieu}, 21(6):2001--2041, 2022.

\bibitem[BG92]{BumpGinzburg}
Daniel Bump and David Ginzburg.
\newblock Spin {$L$}-functions on symplectic groups.
\newblock {\em Internat. Math. Res. Notices}, (8):153--160, 1992.

\bibitem[BG00]{BuGi}
Daniel Bump and David Ginzburg.
\newblock Spin {$L$}-functions on {${\rm GSp}_8$} and {${\rm GSp}_{10}$}.
\newblock {\em Trans. Amer. Math. Soc.}, 352(2):875--899, 2000.

\bibitem[BG14]{BuzzGee}
Kevin Buzzard and Toby Gee.
\newblock The conjectural connections between automorphic representations and
  {G}alois representations.
\newblock In {\em Automorphic forms and {G}alois representations. {V}ol. 1},
  volume 414 of {\em London Math. Soc. Lecture Note Ser.}, pages 135--187.
  Cambridge Univ. Press, Cambridge, 2014.

\bibitem[BGCLRJ24]{CLRJ3}
Jos{\'e}~Ignacio Burgos-Gil, Antonio Cauchi, Francesco Lemma, and Joaqu{\'i}n
  Rodrigues-Jacinto.
\newblock Tempered currents and {D}eligne cohomology of {S}himura varieties,
  with an application to $\mathrm{GSp}_6$.
\newblock {\em Cambridge J. Math}, 12(4):831--902, 2024.

\bibitem[Bou14]{bouganissymplectic}
Thanasis Bouganis.
\newblock On special {$L$}-values attached to {S}iegel modular forms.
\newblock In {\em Iwasawa theory 2012}, volume~7 of {\em Contrib. Math. Comput.
  Sci.}, pages 135--176. Springer, Heidelberg, 2014.

\bibitem[Bou15]{bouganis}
Thanasis Bouganis.
\newblock On the algebraicity of special {$L$}-values of {H}ermitian modular
  forms.
\newblock {\em Doc. Math.}, 20:1293--1329, 2015.

\bibitem[BS00]{BS}
S.~B{\"o}cherer and C.-G. Schmidt.
\newblock {$p$}-adic measures attached to {S}iegel modular forms.
\newblock {\em Ann. Inst. Fourier (Grenoble)}, 50(5):1375--1443, 2000.

\bibitem[CLJ24]{CLRJ1}
Antonio Cauchi, Francesco Lemma, and Joaqu\'in~Rodrigues Jacinto.
\newblock On higher regulators of {S}iegel varieties.
\newblock {\em J. Theor. Nombres Bordeaux}, 2024.
\newblock Accepted for publication. Preprint at
  \url{https://arxiv.org/pdf/1910.11207.pdf}.

\bibitem[Cou00]{court2000}
Michel Courtieu.
\newblock {\em Familles d'operateurs sur les formes modulaires de {S}iegel et
  fonctions {$L$} {$p$}-adiques}.
\newblock PhD thesis, 2000.
\newblock available at
  \url{https://www-fourier.univ-grenoble-alpes.fr/sites/default/files/t38.pdf}.

\bibitem[CP04]{CouPan}
Michel Courtieu and Alexei Panchishkin.
\newblock {\em Non-{A}rchimedean {$L$}-functions and arithmetical {S}iegel
  modular forms}, volume 1471 of {\em Lecture Notes in Mathematics}.
\newblock Springer-Verlag, Berlin, second edition, 2004.

\bibitem[CPR89]{CoPR}
John Coates and Bernadette Perrin-Riou.
\newblock On {$p$}-adic {$L$}-functions attached to motives over {${\bf Q}$}.
\newblock In {\em Algebraic number theory}, volume~17 of {\em Adv. Stud. Pure
  Math.}, pages 23--54. Academic Press, Boston, MA, 1989.

\bibitem[CRJ20]{CRJDocumenta}
Antonio Cauchi and Joaquin Rodrigues~Jacinto.
\newblock Norm-compatible systems of {G}alois cohomology classes for {${\bf
  GSp}_6$}.
\newblock {\em Doc. Math.}, 25:911--954, 2020.

\bibitem[Dav00]{davenport}
Harold Davenport.
\newblock {\em Multiplicative number theory}, volume~74 of {\em Graduate Texts
  in Mathematics}.
\newblock Springer-Verlag, New York, third edition, 2000.
\newblock Revised and with a preface by Hugh L. Montgomery.

\bibitem[Del79]{Deligne}
P.~Deligne.
\newblock Valeurs de fonctions {$L$} et p\'{e}riodes d'int\'{e}grales.
\newblock In {\em Automorphic forms, representations and {$L$}-functions
  ({P}roc. {S}ympos. {P}ure {M}ath., {O}regon {S}tate {U}niv., {C}orvallis,
  {O}re., 1977), {P}art 2}, Proc. Sympos. Pure Math., XXXIII, pages 313--346.
  Amer. Math. Soc., Providence, R.I., 1979.
\newblock With an appendix by N. Koblitz and A. Ogus.

\bibitem[Del89]{DeligneP1}
P.~Deligne.
\newblock Le groupe fondamental de la droite projective moins trois points.
\newblock In {\em Galois groups over {${\bf Q}$} ({B}erkeley, {CA}, 1987)},
  volume~16 of {\em Math. Sci. Res. Inst. Publ.}, pages 79--297. Springer, New
  York, 1989.

\bibitem[DJR20]{DJR}
Mladen Dimitrov, Fabian Januszewski, and A.~Raghuram.
\newblock {$L$}-functions of $\mathrm{GL}(2n)$: $p$-adic properties and
  non-vanishing of twists.
\newblock {\em Compos. Math.}, 156(12):2437--2468, 2020.

\bibitem[dlC15]{traceinvariance}
Ralph~John de~la Cruz.
\newblock Trace invariance for quaternion matrices.
\newblock {\em Science Diliman}, 27(2), 2015.

\bibitem[DR24]{deligne2024motives}
Pierre Deligne and A.~Raghuram.
\newblock Motives, periods, and functoriality.
\newblock {\em Tunisian Journal of Mathematics}, 2024.
\newblock Accepted for publication. Preprint at
  \url{https://arxiv.org/pdf/2403.03342}.

\bibitem[EFMV18]{EFMV}
Ellen Eischen, Jessica Fintzen, Elena Mantovan, and Ila Varma.
\newblock Differential operators and families of automorphic forms on unitary
  groups of arbitrary signature.
\newblock {\em Documenta Mathematica}, 23:445--495, 2018.

\bibitem[EHLS20]{HELS}
Ellen Eischen, Michael Harris, Jianshu Li, and Christopher Skinner.
\newblock {$p$}-adic {$L$}-functions for unitary groups.
\newblock {\em Forum Math. Pi}, 8:e9, 160, 2020.

\bibitem[Eis12]{EDiffOps}
Ellen~E. Eischen.
\newblock {$p$}-adic differential operators on automorphic forms on unitary
  groups.
\newblock {\em Ann. Inst. Fourier (Grenoble)}, 62(1):177--243, 2012.

\bibitem[Eis24]{EischenAWS}
E.~E. Eischen.
\newblock Automorphic forms on unitary groups.
\newblock In {\em Automorphic forms beyond {$\rm GL_2$}---lectures from the
  2022 {A}rizona {W}inter {S}chool}, volume 279 of {\em Math. Surveys Monogr.},
  pages 1--58. Amer. Math. Soc., Providence, RI, [2024] \copyright 2024.

\bibitem[Evd84]{Evdo}
S.~A. Evdokimov.
\newblock Dirichlet series, multiple {A}ndrianov zeta-functions in the theory
  of {E}uler modular forms of genus {$3$}.
\newblock {\em Dokl. Akad. Nauk SSSR}, 277(1):25--29, 1984.

\bibitem[EW16]{EW}
Ellen Eischen and Xin Wan.
\newblock {$p$}-adic {E}isenstein series and {$L$}-functions of certain cusp
  forms on definite unitary groups.
\newblock {\em J. Inst. Math. Jussieu}, 15(3):471--510, 2016.

\bibitem[FK94]{AnalysisSymmCone}
Jacques Faraut and Adam Kor\'{a}nyi.
\newblock {\em Analysis on symmetric cones}.
\newblock Oxford Mathematical Monographs. The Clarendon Press, Oxford
  University Press, New York, 1994.
\newblock Oxford Science Publications.

\bibitem[FP03]{fp}
Douglas~R. Farenick and Barbara A.~F. Pidkowich.
\newblock The spectral theorem in quaternions.
\newblock {\em Linear Algebra Appl.}, 371:75--102, 2003.

\bibitem[Gar92]{GarrettEquivariance}
Paul~B. Garrett.
\newblock On the arithmetic of {S}iegel-{H}ilbert cuspforms: {P}etersson inner
  products and {F}ourier coefficients.
\newblock {\em Invent. Math.}, 107(3):453--481, 1992.

\bibitem[Gar23]{LatticePackings}
Nihar~Prakash Gargava.
\newblock Lattice packings through division algebras.
\newblock {\em Math. Z.}, 303(1):Paper No. 18, 32, 2023.

\bibitem[Geh18]{gehrmann}
Lennart Gehrmann.
\newblock On {S}halika models and {$p$}-adic {$L$}-functions.
\newblock {\em Israel J. Math.}, 226(1):237--294, 2018.

\bibitem[GL16]{guerberofflin}
Lucio Guerberoff and Jie Lin.
\newblock Galois equivariance of critical values of {$L$}-functions for unitary
  groups.
\newblock 2016.
\newblock \url{https://arxiv.org/pdf/1612.09590.pdf}.

\bibitem[Gre89]{green3}
Ralph Greenberg.
\newblock Iwasawa theory for {$p$}-adic representations.
\newblock In {\em Algebraic number theory}, volume~17 of {\em Adv. Stud. Pure
  Math.}, pages 97--137. Academic Press, Boston, MA, 1989.

\bibitem[Gre91]{green2}
Ralph Greenberg.
\newblock Iwasawa theory for motives.
\newblock In {\em {$L$}-functions and arithmetic ({D}urham, 1989)}, volume 153
  of {\em London Math. Soc. Lecture Note Ser.}, pages 211--233. Cambridge Univ.
  Press, Cambridge, 1991.

\bibitem[Gre94]{green1}
Ralph Greenberg.
\newblock Iwasawa theory and {$p$}-adic deformations of motives.
\newblock In {\em Motives ({S}eattle, {WA}, 1991)}, volume~55 of {\em Proc.
  Sympos. Pure Math.}, pages 193--223. Amer. Math. Soc., Providence, RI, 1994.

\bibitem[GS20]{gansavin}
Wee~Teck Gan and Gordan Savin.
\newblock An exceptional {S}iegel-{W}eil formula and poles of the {S}pin
  {L}-function of {${\rm PGSp}_6$}.
\newblock {\em Compos. Math.}, 156(6):1231--1261, 2020.

\bibitem[Gue16]{guerberoff}
Lucio Guerberoff.
\newblock Period relations for automorphic forms on unitary groups and critical
  values of {$L$}-functions.
\newblock {\em Doc. Math.}, 21:1397--1458, 2016.

\bibitem[Har79]{harris3lemmas}
Michael Harris.
\newblock A note on three lemmas of {S}himura.
\newblock {\em Duke Math. J.}, 46(4):871--879, 1979.

\bibitem[Har81a]{harrismaass}
Michael Harris.
\newblock Maass operators and {E}isenstein series.
\newblock {\em Math. Ann.}, 258(2):135--144, 1981.

\bibitem[Har81b]{hasv}
Michael Harris.
\newblock Special values of zeta functions attached to {S}iegel modular forms.
\newblock {\em Ann. Sci. \'Ecole Norm. Sup. (4)}, 14(1):77--120, 1981.

\bibitem[Har84]{harrisannals}
Michael Harris.
\newblock Eisenstein series on {S}himura varieties.
\newblock {\em Ann. of Math. (2)}, 119(1):59--94, 1984.

\bibitem[Har97]{harriscrelle}
Michael Harris.
\newblock {$L$}-functions and periods of polarized regular motives.
\newblock {\em J. Reine Angew. Math.}, 483:75--161, 1997.

\bibitem[Har04]{harrisspin}
Michael Harris.
\newblock Occult period invariants and critical values of the degree four
  {$L$}-function of {${\rm GSp(4)}$}.
\newblock In {\em Contributions to automorphic forms, geometry, and number
  theory}, pages 331--354. Johns Hopkins Univ. Press, Baltimore, MD, 2004.

\bibitem[Har08]{harrisbirkhauser}
Michael Harris.
\newblock A simple proof of rationality of {S}iegel-{W}eil {E}isenstein series.
\newblock In {\em Eisenstein series and applications}, volume 258 of {\em
  Progr. Math.}, pages 149--185. Birkh\"{a}user Boston, Boston, MA, 2008.

\bibitem[Hid85]{hi85}
Haruzo Hida.
\newblock A {$p$}-adic measure attached to the zeta functions associated with
  two elliptic modular forms. {I}.
\newblock {\em Invent. Math.}, 79(1):159--195, 1985.

\bibitem[Hid88]{hida88}
Haruzo Hida.
\newblock A {$p$}-adic measure attached to the zeta functions associated with
  two elliptic modular forms. {II}.
\newblock {\em Ann. Inst. Fourier (Grenoble)}, 38(3):1--83, 1988.

\bibitem[HPSS22]{HPSS}
Shuji Horinaga, Ameya Pitale, Abhishek Saha, and Ralf Schmidt.
\newblock The special values of the standard {$L$}-functions for {${\rm
  GSp}_{2n}\times{\rm GL}_1$}.
\newblock {\em Trans. Amer. Math. Soc.}, 375(10):6947--6982, 2022.

\bibitem[HR20]{HaRa}
G\"{u}nter Harder and A.~Raghuram.
\newblock {\em Eisenstein cohomology for {${\rm GL}_N$} and the special values
  of {R}ankin-{S}elberg {$L$}-functions}, volume 203 of {\em Annals of
  Mathematics Studies}.
\newblock Princeton University Press, Princeton, NJ, 2020.

\bibitem[Kar74]{KarelFourier}
Martin~L. Karel.
\newblock Fourier coefficients of certain {E}isenstein series.
\newblock {\em Ann. of Math. (2)}, 99:176--202, 1974.

\bibitem[Kat78]{kaCM}
Nicholas~M. Katz.
\newblock {$p$}-adic {$L$}-functions for {CM} fields.
\newblock {\em Invent. Math.}, 49(3):199--297, 1978.

\bibitem[Kim93]{KimExceptional}
Henry~H. Kim.
\newblock Exceptional modular form of weight {$4$} on an exceptional domain
  contained in {${\bf C}^{27}$}.
\newblock {\em Rev. Mat. Iberoamericana}, 9(1):139--200, 1993.

\bibitem[Kli90]{KlingenSiegel}
Helmut Klingen.
\newblock {\em Introductory lectures on {S}iegel modular forms}, volume~20 of
  {\em Cambridge Studies in Advanced Mathematics}.
\newblock Cambridge University Press, Cambridge, 1990.

\bibitem[Kli62]{klingen}
Helmut Klingen.
\newblock \"{U}ber die {W}erte der {D}edekindschen {Z}etafunktion.
\newblock {\em Math. Ann.}, 145:265--272, 1961/62.

\bibitem[Koz00]{kozima}
Noritomo Kozima.
\newblock On special values of standard {$L$}-functions attached to vector
  valued {S}iegel modular forms.
\newblock {\em Kodai Math. J.}, 23(2):255--265, 2000.

\bibitem[KS23]{KretShin}
Arno Kret and Sug~Woo Shin.
\newblock Galois representations for general symplectic groups.
\newblock {\em J. Eur. Math. Soc. (JEMS)}, 25(1):75--152, 2023.

\bibitem[Liu19]{ZLiuFour}
Zheng Liu.
\newblock Nearly overconvergent {S}iegel modular forms.
\newblock {\em Ann. Inst. Fourier (Grenoble)}, 69(6):2439--2506, 2019.

\bibitem[Liu20]{liuJussieu}
Zheng Liu.
\newblock {$p$}-adic {$L$}-functions for ordinary families on symplectic
  groups.
\newblock {\em J. Inst. Math. Jussieu}, 19(4):1287--1347, 2020.

\bibitem[LPSZ21]{LPSZ}
David Loeffler, Vincent Pilloni, Christopher Skinner, and Sarah~Livia Zerbes.
\newblock Higher {H}ida theory and {$p$}-adic {$L$}-functions for
  {$\mathrm{GSp}_4$}.
\newblock {\em Duke Math. J.}, 170(18):4033--4121, 2021.

\bibitem[LR20]{liu-rosso}
Zheng Liu and Giovanni Rosso.
\newblock Non-cuspidal {H}ida theory for {S}iegel modular forms and trivial
  zeros of $p$-adic {$L$}-functions.
\newblock {\em Math. Ann.}, 378(1-2):153--231, 2020.

\bibitem[Maa71]{maass71}
Hans Maass.
\newblock {\em Siegel's modular forms and {D}irichlet series}, volume Vol. 216
  of {\em Lecture Notes in Mathematics}.
\newblock Springer-Verlag, Berlin-New York, 1971.
\newblock Dedicated to the last great representative of a passing epoch. Carl
  Ludwig Siegel on the occasion of his seventy-fifth birthday.

\bibitem[Man73]{maninmfalg}
Ju.~I. Manin.
\newblock Periods of cusp forms, and {$p$}-adic {H}ecke series.
\newblock {\em Mat. Sb. (N.S.)}, 92(134):378--401, 503, 1973.

\bibitem[Mil90]{MilneAA}
J.~S. Milne.
\newblock Canonical models of (mixed) {S}himura varieties and automorphic
  vector bundles.
\newblock In {\em Automorphic forms, {S}himura varieties, and {$L$}-functions,
  {V}ol. {I} ({A}nn {A}rbor, {MI}, 1988)}, volume~10 of {\em Perspect. Math.},
  pages 283--414. Academic Press, Boston, MA, 1990.

\bibitem[Mil20]{milneCM}
J.~S. Milne.
\newblock Complex {M}ultiplication.
\newblock 2020.
\newblock \url{https://www.jmilne.org/math/CourseNotes/CM.pdf}.

\bibitem[MTT86]{MTT}
B.~Mazur, J.~Tate, and J.~Teitelbaum.
\newblock On {$p$}-adic analogues of the conjectures of {B}irch and
  {S}winnerton-{D}yer.
\newblock {\em Invent. Math.}, 84(1):1--48, 1986.

\bibitem[Pol17]{pollack1}
Aaron Pollack.
\newblock The spin ${L}$-function on ${GS}p_6$ for {S}iegel modular forms.
\newblock {\em Compositio Mathematica}, 153(7):1391--1432, 2017.

\bibitem[Pol21]{pollackNotices}
Aaron Pollack.
\newblock Exceptional groups and their modular forms.
\newblock {\em Notices of the American Mathematical Society}, 68(2):194--203,
  2021.

\bibitem[PS18]{PollackShah}
Aaron Pollack and Shrenik Shah.
\newblock The spin {$L$}-function on {$\rm GSp_6$} via a non-unique model.
\newblock {\em Amer. J. Math.}, 140(3):753--788, 2018.

\bibitem[PSS21]{PSS}
Ameya Pitale, Abhishek Saha, and Ralf Schmidt.
\newblock On the standard {$L$}-function for
  {$\mathrm{GSp}_{2n}\times\mathrm{GL}_1$} and algebraicity of symmetric fourth
  {$L$}-values for {$\mathrm{GL}_2$}.
\newblock {\em Ann. Math. Qu\'{e}.}, 45(1):113--159, 2021.

\bibitem[Ran52]{rankin}
R.~A. Rankin.
\newblock The scalar product of modular forms.
\newblock {\em Proc. London Math. Soc. (3)}, 2:198--217, 1952.

\bibitem[Sah15]{Saha}
Abhishek Saha.
\newblock On ratios of {P}etersson norms for {Y}oshida lifts.
\newblock {\em Forum Math.}, 27(4):2361--2412, 2015.

\bibitem[Sch88]{SchneiderBeilinsonC}
Peter Schneider.
\newblock Introduction to the {B}e\u{\i}linson conjectures.
\newblock In {\em Be\u{\i}linson's conjectures on special values of
  {$L$}-functions}, volume~4 of {\em Perspect. Math.}, pages 1--35. Academic
  Press, Boston, MA, 1988.

\bibitem[Ser73]{serre}
Jean-Pierre Serre.
\newblock Formes modulaires et fonctions z\^eta {$p$}-adiques.
\newblock In {\em Modular functions of one variable, {III} ({P}roc. {I}nternat.
  {S}ummer {S}chool, {U}niv. {A}ntwerp, 1972)}, pages 191--268. Lecture Notes
  in Math., Vol. 350. Springer, Berlin, 1973.

\bibitem[Shi76]{shimura-RS}
Goro Shimura.
\newblock The special values of the zeta functions associated with cusp forms.
\newblock {\em Comm. Pure Appl. Math.}, 29(6):783--804, 1976.

\bibitem[Shi81a]{shimura81}
Goro Shimura.
\newblock Arithmetic of differential operators on symmetric domains.
\newblock {\em Duke Math. J.}, 48(4):813--843, 1981.

\bibitem[Shi81b]{shimura-half}
Goro Shimura.
\newblock The critical values of certain zeta functions associated with modular
  forms of half-integral weight.
\newblock {\em J. Math. Soc. Japan}, 33(4):649--672, 1981.

\bibitem[Shi82]{ShimConfluent}
Goro Shimura.
\newblock Confluent hypergeometric functions on tube domains.
\newblock {\em Math. Ann.}, 260(3):269--302, 1982.

\bibitem[Shi83]{sheseries}
Goro Shimura.
\newblock On {E}isenstein series.
\newblock {\em Duke Math. J.}, 50(2):417--476, 1983.

\bibitem[Shi00]{shar}
Goro Shimura.
\newblock {\em Arithmeticity in the theory of automorphic forms}, volume~82 of
  {\em Mathematical Surveys and Monographs}.
\newblock American Mathematical Society, Providence, RI, 2000.

\bibitem[Sie69]{siegel1}
Carl~Ludwig Siegel.
\newblock Berechnung von {Z}etafunktionen an ganzzahligen {S}tellen.
\newblock {\em Nachr. Akad. Wiss. G\"{o}ttingen Math.-Phys. Kl. II},
  1969:87--102, 1969.

\bibitem[Sie70]{siegel2}
Carl~Ludwig Siegel.
\newblock \"{U}ber die {F}ourierschen {K}oeffizienten von {M}odulformen.
\newblock {\em Nachr. Akad. Wiss. G\"{o}ttingen Math.-Phys. Kl. II},
  1970:15--56, 1970.

\bibitem[Stu81]{sturm-critical}
Jacob Sturm.
\newblock The critical values of zeta functions associated to the symplectic
  group.
\newblock {\em Duke Math. J.}, 48(2):327--350, 1981.

\bibitem[Tsa76]{Tsao}
Liang~Chi Tsao.
\newblock The rationality of the {F}ourier coefficients of certain {E}isenstein
  series on tube domains.
\newblock {\em Compositio Math.}, 32(3):225--291, 1976.

\bibitem[Urb06]{Urb}
Eric Urban.
\newblock Groupes de {S}elmer et fonctions {$L$} {$p$}-adiques pour les
  repr\'esentations modulaires adjointes.
\newblock Preprint available at
  \url{http://www.math.columbia.edu/~urban/eurp/ADJMC.pdf}, 2006.

\end{thebibliography}
\printindex
\end{document}